\setlist{leftmargin=*,align=left,noitemsep,topsep=0pt,parsep=0pt,partopsep=0pt}
\definecolor{MyDarkGreen}{rgb}{0.0,0.4,0.0}
\tiny\color{Blue},          % Line numbers are blue
\begin{document}

\catchline{}{}{}{}{} % Publisher's Area please ignore

\markboth{G. Leonov et al.}{Homoclinic Bifurcations in Lorenz-like Systems}

\title{Homoclinic Bifurcations of the Merging Strange Attractors
in the Lorenz-like System}
% \title{Homoclinic Bifurcations in Lorenz-like Systems.\\
% Existence and Stability/Instability Problems of Homoclinic Butterfly.
% Homoclinic Merging of Strange Attractors}

\author{G. A. Leonov}
\address{St. Petersburg State University, Russia}

\author{R. N. Mokaev}
\address{St. Petersburg State University, Russia\\
% Department of Mathematical Information Technology,\\
University of Jyv\"{a}skyl\"{a}, Finland}

\author{N. V. Kuznetsov}
\address{St. Petersburg State University, Russia\\
University of Jyv\"{a}skyl\"{a}, Finland \\
Institute of Problems of Mechanical Engineering RAS, Russia\\
nkuznetsov239@gmail.com}

\author{T. N. Mokaev}
\address{St. Petersburg State University, Russia}

\maketitle

\begin{history}
\received{(to be inserted by publisher)}
\end{history}

\begin{abstract}
In this article we construct the parameter region where the existence of
a homoclinic orbit to a zero equilibrium state of saddle type in the Lorenz-like system %\eqref{sys:lorenz_like}
will be analytically proved in the case of a nonnegative saddle value.
Then, for a qualitative description of the different types of homoclinic bifurcations,
a numerical analysis of the detected parameter region is carried out
to discover several new interesting bifurcation scenarios.
\end{abstract}

\keywords{Lorenz system, Lorenz-like system, Lorenz attractor, homoclinic orbit, homoclinic bifurcation,
strange attractor}

\section{\label{sec:intro} Introduction}

In 1963, E. Lorenz \cite {Lorenz-1963} discovered a strange attractor and described
a homoclinic bifurcation of the change in attraction of separatrices of a saddle in a
three-mode model of two-dimensional convection
\begin{equation}\label{eq:lorenz}
    \begin{cases}
        \dot{x} = -\sigma (x - y), \\
        \dot{y} = r x - dy - x z, \\
        \dot{z} = - b z + x y,
    \end{cases}
\end{equation}
where $d = 1$, $\sigma > 0$ is a Prandtl number, $r > 0$ is a Rayleigh number,
$b > 0$ is a parameter that determines the ratio of the vertical and
horizontal dimensions of the convection cell.
Equations \eqref{sys:lorenz_like} are also encountered in other mechanical and physical problems,
for example, in the problem of fluid convection in a closed annular tube~\cite{RubenfeldS-1977},
for describing the mechanical model of a chaotic water wheel~\cite{TelG-2006},
the model of a dissipative oscillator with an inertial nonlinearity~\cite{NeimarkL-1992},
and the dynamics of a single-mode laser~\cite{Oraevsky-1981}.

Later on, for $d \neq 1$ it was suggested various Lorenz-like systems,
such as Chen system~\cite{ChenU-1999} ($d = -c$, $c > \tfrac{\sigma}{2}$, $r = c - \sigma$),
Lu system~\cite{LuChen-2002} ($d = -c$, $c > 0$, $r = 0$),
and Tigan-Yang systems~\cite{Tigan-2008,YangChen-2008} ($d = 0$),
which have interesting non-regular dynamics
differing in certain aspect form the Lorenz system dynamics~\cite{LeonovK-2015-AMC}.

Using the following smooth change of variables (see, e.g.~\cite{Leonov-2015-ND,LeonovAM-2017-VestSPSU}):
\begin{equation}
    \eta := \sigma (y - x), \quad \xi := z - \tfrac{x^2}{b}
\end{equation}
one can reduce system \eqref{eq:lorenz} to the form
\begin{equation}\label{eq:lorenz_intermed}
    \begin{cases}
        \dot{x} = \eta, \\
        \dot{\eta} = -(\sigma+d) \eta + \sigma \xi x + \sigma (r-d)x - \tfrac{\sigma}{b} x^3, \\
        \dot{\xi} = -b \xi - \tfrac{(2\sigma-b)}{b\sigma} x \eta,
    \end{cases}
\end{equation}
Then, by changing
$$\begin{aligned}
& t := \sqrt{\sigma(r-d)} t,\quad
x := \frac{x}{\sqrt{b(r-d)}},
& \vartheta := \frac{\eta}{\sqrt{b\sigma}(r-d)},\quad
u := \frac{\xi}{r-d}
\end{aligned}
$$
system \eqref{eq:lorenz_intermed} can be reduced to the form
\begin{equation}\label{sys:lorenz_like}
    \begin{cases}
        \dot{x} = \vartheta, \\
        \dot{\vartheta} = -\lambda \vartheta - x u + x - x^3, \\
        \dot{u} = - \alpha u - \beta x \vartheta,
    \end{cases}
\end{equation}
$$
\lambda=\frac{(\sigma+d)}{\sqrt{\sigma(r-d)}},\quad
\alpha=\frac{b}{\sqrt{\sigma(r-d)}},\quad\beta=\frac{2\sigma-b}{\sigma}.
$$
Using the following change of variables
(see, e.g. \cite{Leonov-2012-DRAN,Leonov-2013-IJBC,Leonov-2014-DAN,Leonov-2014-ND})
\[
    \nu := y, \quad u := z - x^2
\]
the well-known Shimizu-Morioka system~\cite{Shimizu1980201,LeonovAK-2015}
\begin{equation}\label{eq:shimizu_morioka}
    \begin{cases}
        \dot{x} = y, \\
        \dot{y} = (1 - z) x -\lambda y, \\
        \dot{z} = - \alpha (z - x^2)
    \end{cases}
\end{equation}
with $\beta = 2$ can be also transformed to the form \eqref{sys:lorenz_like}.

The following Lorenz-like system from \cite{OvsyannikovT-2017}
\begin{equation}\label{eq:ovsyannikov}
    \begin{cases}
        \dot{X} = Y, \\
        \dot{Y} = X -\lambda Y - XZ - X^3, \\
        \dot{Z} = - \alpha Z + B X^2.
    \end{cases}
\end{equation}
can be also reduced to the system of form \eqref{sys:lorenz_like} using
by changing the variables
\begin{equation}\label{eq:transform:leonov_ovsyannikov}
    X := \sqrt{\frac{\alpha}{B+\alpha}} \, x, \quad
    Y := \sqrt{\frac{\alpha}{B+\alpha}} \, y, \quad
    Z := z + \frac{B}{B+\alpha} x^2,
\end{equation}
and if $\beta = \frac{2B}{B+\alpha} < 2$.

Thus, in this article it is convenient
for us to consider and study system~\eqref{sys:lorenz_like}.
Its equilibria have the following form:
\begin{equation}\label{equib}
    S_0 = (0, \, 0, \, 0), \quad S_{\pm} = (\pm 1, \, 0, \, 0).
\end{equation}
It is easy to show that for positive $\alpha$, $\beta$, $\lambda$
the equilibrium state $S_0$ is always a saddle, and
$S_\pm$ are stable equilibria if $\beta < \frac{\lambda(\lambda \alpha + \alpha^2 + 2)}{(\lambda+\alpha)}$.

The seminal work \cite{Lorenz-1963} initiated the development of chaotic dynamics
and, in particular, the description of scenarios of transition to chaos.
An important role in such scenarios plays
a homoclinic bifurcation.
% Homoclinic bifurcations are very important in nonlinear systems analysis.
They are associated with global changes in dynamics in the phase space
of the system such as changes in attractors basins of attraction
and the emergence of chaotic dynamics
\cite{Wiggins-1988,ShilnikovTCh-1998,ShilnikovTCh-2001,HomburgS-2010,AfraimovichGLShT-2014}
and are applied in mechanics, theory of population and chemistry
(see, e.g, \cite{KuznetsovMR-1992,Champneys-1998,ArgoulAR-1987}).
The high complexity of studying the motions in the vicinity of
a homoclinic trajectory and the homoclinic trajectory itself
was noted by Poincar\'{e}~\cite{Poincare-1892}.
In this paper for the Lorenz-like system~\eqref{sys:lorenz_like}
we analytically prove the existence of a homoclinic trajectory and
make an attempt to study the various scenarios of homoclinic bifurcation
numerically.

\section{Existence problem of homoclinic orbit. Analytical method.}

\begin{definition}
The homoclinic trajectory ${\rm x}(t)$ of an autonomous system of differential equations
\begin{equation}\label{eq:ode}
    \dot{{\rm x}} = f({\bf \rm x},q), \quad t \in \mathbb{R}, \quad
    {\bf \rm x} \in \mathbb{R}^n
\end{equation}
for a given value of parameter $q \in \mathbb{R}^m$
is a phase trajectory that is doubly asymptotic
to a saddle equilibrium ${\rm x}_0 \in \mathbb{R}^n$, i.e.
$$
    \lim_{t \to +\infty} {\rm x}(t) = \lim_{t \to -\infty} {\rm x}(t) = {\rm x}_0.
$$
\end{definition}
Here $f({\bf \rm x},q)$ is a smooth vector-function,
$\mathbb{R}^n = \{{\rm x}\}$ is a phase space of system \eqref{eq:ode}.
Let $\gamma(s), s \in [0,1]$ be a smooth path in the space
of the parameter $\{q\}=\mathbb{R}^m$.
Consider the following Tricomi problem \cite{Tricomi-1933,Leonov-2014-DAN} for system \eqref{eq:ode} and
the path $\gamma(s)$: {\it is there a point $q_0 \in \gamma(s)$
for which system \eqref{eq:ode} with $q_0$ has a homoclinic trajectory?}

Consider system \eqref{eq:ode} with $q=\gamma(s)$ and introduce the following notions.
Let ${\rm x}(t,s)^+$ be an outgoing separatrix of the saddle point ${\rm x}_0$
(i.e. $\lim\limits_{t\to-\infty}{\rm x}(t,s)^+={\rm x}_0$)
with a one-dimensional unstable manifold.
Define by ${\rm x}_{\Omega}(s)^+$
the point of the first crossing of separatrix ${\rm x}(t,s)^+$ with the closed set $\Omega$:
\[
    {\rm x}(t,s)^+ \, \not \in \, \Omega, \quad t\in(-\infty,T),
\]
\[
    {\rm x}(T,s)^+ = {\rm x}_{\Omega}(s)^+ \in \Omega.
\]
If there is no such crossing, we assume that ${\rm x}_{\Omega}(s)^+=\emptyset$ (the empty set).

Now let us formulate a general method for proving the existence of homoclinic trajectories
for systems \eqref{eq:ode} called the {\it Fishing principle}
\cite{Leonov-2012-PLA,Leonov-2013-IJBC,Leonov-2014-ND,LeonovKM-2015-EPJST}.
\begin{theorem}
\label{thm:fish_princ}
Suppose that for the path $\gamma(s)$ there is an $(n-1)$-dimensional bounded manifold $\Omega$ with
a piecewise-smooth edge $\partial\Omega$
that possesses the following properties:
\begin{enumerate}[label=(\roman*)]
    \item for any ${\rm x}\in\Omega\setminus\partial\Omega$ and $s\in[0,1]$,
        the vector $f({\rm x},\gamma(s))$ is transversal
        to the manifold $\Omega\setminus\partial\Omega$; \label{fish_princ:cond1}

    \item for any $s\in[0,1]$, $f({\rm x}_0,\gamma(s))=0$,
        the point ${\rm x}_0\in\partial\Omega$ is a saddle; \label{fish_princ:cond2}

    \item for $s=0$ the inclusion ${\rm x}_{\Omega}(0)^+\in\Omega\setminus\partial\Omega$
        is valid; \label{fish_princ:cond3}

    \item for $s=1$ the relation ${\rm x}_{\Omega}(1)^+=\emptyset$ is valid
        (i.e. ${\rm x}_{\Omega}(1)^+$ is an empty set); \label{fish_princ:cond4}

    \item for any $s\in[0,1]$ and ${\rm y}\in\partial\Omega\setminus {\rm x}_0$
        there exists a neighborhood $U({\rm y},\,\delta)=\{{\rm x} \in \mathbb{R}^n~\vert~\, |{\rm x}-{\rm y}|<\delta\}$
        such that ${\rm x}_{\Omega}(s)^+ \, \not\in \, U({\rm y},\delta)$. \label{fish_princ:cond5}
\end{enumerate}

If conditions \ref{fish_princ:cond1}--\ref{fish_princ:cond5} are satisfied,
then there exists $s_0\in[0,1]$ such that ${\rm x}(t,s_0)^+$ is a homoclinic trajectory
of the saddle point ${\rm x}_0$.
\end{theorem}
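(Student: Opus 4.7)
The plan is to apply a continuity/connectedness argument to the first-hit map $s \mapsto {\rm x}_\Omega(s)^+$, exploiting that $\Omega$ acts as a transversal section for the whole family. First I would verify that this map is continuous on the open set of $s$ where the outgoing separatrix meets $\Omega \setminus \partial\Omega$. Continuous dependence of solutions of~\eqref{eq:ode} on the parameter $q = \gamma(s)$, together with smooth dependence of the $1$-dimensional unstable manifold of the saddle on $s$ (which comes from condition~\ref{fish_princ:cond2} via the invariant manifold theorem), makes ${\rm x}(t,s)^+$ a continuous function of $(t,s)$; the implicit function theorem applied at a transversal intersection guaranteed by~\ref{fish_princ:cond1} then yields continuity of both the crossing time $T(s)$ and the crossing point ${\rm x}_\Omega(s)^+$.

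Next I would define
\[
    S = \{\, s \in [0,1] : {\rm x}_\Omega(s')^+ \in \Omega \setminus \partial\Omega \text{ for all } s' \in [0,s]\,\},
\]
set $s_0 = \sup S$, and note that $s_0 > 0$ by condition~\ref{fish_princ:cond3} combined with the continuity above. I would then argue that ${\rm x}(t,s_0)^+$ is the desired homoclinic trajectory by ruling out all other alternatives for the behaviour of the first-hit map at $s_0$; condition~\ref{fish_princ:cond4} is what guarantees that this bifurcation actually occurs before $s$ exhausts the path.

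The crux is the dichotomy at $s = s_0$. If ${\rm x}_\Omega(s_0)^+$ exists, it cannot lie in $\Omega \setminus \partial\Omega$, because transversality would then extend the intersection to an open neighbourhood of $s_0$, contradicting maximality; by condition~\ref{fish_princ:cond5} it cannot lie on $\partial\Omega \setminus \{{\rm x}_0\}$; and it cannot coincide with ${\rm x}_0$ at a finite time by uniqueness of solutions of the smooth vector field. The remaining possibility is ${\rm x}_\Omega(s_0)^+ = \emptyset$ together with ${\rm x}_\Omega(s_n)^+ \in \Omega$ for some sequence $s_n \uparrow s_0$. Boundedness of $\Omega$ and compactness of $\overline{\Omega}$ let me pass to a subsequence with ${\rm x}_\Omega(s_n)^+ \to p \in \overline{\Omega}$; a limit $p \in \Omega \setminus \partial\Omega$ is excluded by transversality (the intersection would persist at $s_0$), a limit $p \in \partial\Omega \setminus \{{\rm x}_0\}$ is excluded by~\ref{fish_princ:cond5}, so $p = {\rm x}_0$ and necessarily $T(s_n) \to +\infty$.

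The hard part will be converting this accumulation at the saddle into genuine asymptotic convergence ${\rm x}(t,s_0)^+ \to {\rm x}_0$ as $t \to +\infty$. Here I would invoke the local hyperbolic structure at ${\rm x}_0$: in a small fundamental neighbourhood where the $\lambda$-lemma (or the Hadamard--Perron invariant manifold machinery) applies, the orbits ${\rm x}(t,s_n)^+$ approach ${\rm x}_0$ at times $T(s_n) \to +\infty$ without previously meeting $\Omega \setminus \partial\Omega$; passing to the limit and using continuous dependence on compact time intervals forces the limit orbit ${\rm x}(t,s_0)^+$ to stay bounded away from $\Omega \setminus \partial\Omega$ for all positive $t$ and to be trapped on the local stable manifold of the saddle, giving the required convergence. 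Combined with ${\rm x}(t,s_0)^+ \to {\rm x}_0$ as $t \to -\infty$ (from the defining property of the outgoing separatrix), this produces the homoclinic orbit. The main technical effort sits in this last step and in making the continuous-dependence estimates uniform in $s$ on the compact interval $[0,1]$.
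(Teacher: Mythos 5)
You should know at the outset that the paper does not prove Theorem~\ref{thm:fish_princ}: it states the result and immediately defers to \cite{Leonov-2012-PLA} for ``the proof and a geometric interpretation.'' So there is no in-paper argument to compare against; your proposal can only be measured against the standard proof of the Fishing principle, and in that respect your strategy is the intended one --- continuity of the first-crossing map where it lands in $\Omega\setminus\partial\Omega$, a supremum/connectedness argument in $s$, exclusion of escape through $\partial\Omega\setminus\{{\rm x}_0\}$ via condition~\ref{fish_princ:cond5}, and the conclusion that the crossing point can only disappear by being absorbed at the saddle, which forces a homoclinic orbit at $s_0$.

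That said, two of your steps are genuine gaps rather than omitted routine detail. First, your exclusion of an interior limit point $p\in\Omega\setminus\partial\Omega$ of the sequence ${\rm x}_\Omega(s_n)^+$ (``the intersection would persist at $s_0$'') rests on finite-time continuous dependence, which only applies if the crossing times $T(s_n)$ stay bounded along the subsequence; you must split into the cases $T(s_n)$ bounded and $T(s_n)\to+\infty$ before concluding $p={\rm x}_0$, since in the unbounded-time case the $s_0$-orbit need not pass near $p$ at any finite time. Relatedly, condition~\ref{fish_princ:cond5} as literally written supplies a $\delta$ that may depend on $s$, so by itself it only says ${\rm x}_\Omega(s)^+\neq{\rm y}$; to forbid \emph{accumulation} of ${\rm x}_\Omega(s_n)^+$ at a point of $\partial\Omega\setminus\{{\rm x}_0\}$ you need the uniform-in-$s$ reading of that condition (which is how it is actually verified in the proof of Theorem~\ref{thm:leonov}, where the excluded neighbourhoods come from an explicit solution independent of $s$). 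Second, the decisive step --- upgrading ``the $s_n$-orbits reach arbitrarily small neighbourhoods of ${\rm x}_0$ at times $T(s_n)\to+\infty$'' to ``${\rm x}(t,s_0)^+\to{\rm x}_0$ as $t\to+\infty$'' --- is only named ($\lambda$-lemma, local invariant manifolds), not carried out: you still must show that the limit orbit enters the local hyperbolic neighbourhood and cannot exit along the unstable direction without generating a crossing of $\Omega\setminus\partial\Omega$ that is forbidden at $s_0$, and this requires the boundedness of the relevant orbit segments (supplied in the application by Lemma~\ref{lemma:lem2}, not by the hypotheses of the abstract theorem alone). Your outline is compatible with the published proof but does not yet close its hardest points.
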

The proof and a geometric interpretation of Theorem~\ref{thm:fish_princ} are given,
e.g. in \cite{Leonov-2012-PLA}.

%%%%%%%%%%%%%%%%%%%%%%%%%%%%%%%%%%%%%%%%%%%%%%%%%%%%%%%%%%%%%%%%%%%%%%%%%%%%%%%%%%%%%%%%%%%%%%%%%%%%
For the further investigation of system~\eqref{sys:lorenz_like}
we prove several auxiliary statements using the Lyapunov function
\begin{equation}\label{eq:lyap-func}
    V(x,\vartheta,u)=\vartheta^2-\frac{u^2}{\beta}-x^2+\frac{x^4}{2}
\end{equation}
which has the following derivative along the solutions of system~\eqref{sys:lorenz_like}
\begin{equation}\label{eq:lyap-func-der}
    \frac{dV}{dt} = ({\rm grad} V, f) = 2\left(-\lambda\vartheta(t)^2+
    \frac{\alpha}{\beta}u(t)^2\right).
\end{equation}

\begin{lemma}\label{lemma:lem1}
Let $\lambda=0$ and $\beta>0$. Then the separatrix
\[
    \lim\limits_{t\to -\infty}x(t)=\lim\limits_{t\to -\infty}\vartheta(t)=\lim\limits_{t\to -\infty} u (t)=0
\]
starting from the saddle $x = \vartheta = u = 0$ tends to infinity as $t \to +\infty$.
\end{lemma}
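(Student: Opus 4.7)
The plan is to exploit the Lyapunov function $V$ from \eqref{eq:lyap-func}, whose derivative \eqref{eq:lyap-func-der} with $\lambda = 0$ reduces to $\dot V = \tfrac{2\alpha}{\beta}\,u^2 \geq 0$, so $V$ is non-decreasing along every solution. Since the separatrix emanates from $S_0$ with $V(S_0)=0$, monotonicity immediately gives $V(x(t),\vartheta(t),u(t)) \geq 0$ everywhere along the separatrix.

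I would then assume, for contradiction, that the separatrix stays bounded on $[0,+\infty)$. Then $V(t)$ is bounded and monotone, so $V(t) \to V_\infty$ with $V_\infty \geq 0$, and $\int_0^\infty u(t)^2\,dt < \infty$. By LaSalle's invariance principle the $\omega$-limit set $\omega^+$ of the separatrix is a nonempty, compact, connected, invariant subset of $\{\dot V = 0\} = \{u = 0\}$.

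Next I would identify the largest invariant set contained in $\{u=0\}$. If $u \equiv 0$ on an orbit of \eqref{sys:lorenz_like}, the third equation yields $\beta x(t)\vartheta(t) \equiv 0$, and since $\beta > 0$ we get $x\vartheta \equiv 0$; assuming $\vartheta(t_0) \neq 0$ would force $x \equiv 0$ on a neighbourhood of $t_0$ and hence $\vartheta = \dot x \equiv 0$ there, a contradiction. Thus $\vartheta \equiv 0$, $x$ is constant, and $\dot\vartheta = x - x^3 = 0$ gives $x \in \{0,\pm 1\}$. The invariant set inside $\{u=0\}$ is therefore exactly $\{S_0, S_+, S_-\}$, and since $\omega^+$ is connected it must coincide with exactly one of these three equilibria.

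Finally, I would rule out each candidate. Since $V(S_\pm) = -\tfrac12 < 0$ while $V \equiv V_\infty \geq 0$ on $\omega^+$, the cases $\omega^+ = \{S_\pm\}$ are excluded. If $\omega^+ = \{S_0\}$, the separatrix would be homoclinic to $S_0$, so $V_\infty = V(S_0) = 0$; non-decreasingness of $V$ from $0$ to $0$ then forces $V \equiv 0$, hence $u \equiv 0$, and hence $x(t)\vartheta(t) \equiv 0$ for all $t$. But the linearization of \eqref{sys:lorenz_like} at $S_0$ (for $\lambda = 0$) has eigenvalues $+1,-1,-\alpha$ with the one-dimensional unstable manifold tangent to $(1,1,0)$, so $x(t)\vartheta(t) > 0$ for all sufficiently negative $t$, a contradiction. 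The only delicate step is the LaSalle identification of $\omega^+$, where connectedness is used to collapse it to a single point of the discrete set $\{S_0, S_\pm\}$; the rest of the argument is a short computation.
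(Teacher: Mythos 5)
Your argument is essentially the paper's own proof: the same Lyapunov function with $\dot V=\tfrac{2\alpha}{\beta}u^2\ge 0$, a LaSalle-type identification of the $\omega$-limit set inside $\{u=0\}$ as one of the equilibria $S_0,S_\pm$, exclusion of $S_\pm$ via $V(S_\pm)=-\tfrac12<0\le V$, and exclusion of the homoclinic case via $V\equiv 0\Rightarrow u\equiv 0\Rightarrow x\vartheta\equiv 0$ (you close this with the tangency to the unstable eigenvector $(1,1,0)$, the paper by noting $x\vartheta\equiv 0$ forces the separatrix to be the trivial solution --- both work). One small tightening: to conclude that the separatrix \emph{tends to infinity} rather than merely being unbounded, the contradiction hypothesis should be ``the separatrix has an $\omega$-limit point'' (as in the paper) instead of ``the separatrix stays bounded''; your argument goes through unchanged under that weaker hypothesis since connectedness of $\omega^+$ is not actually needed once $V\ge 0$ rules out $S_\pm$.
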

\begin{proof}
Assume the contrary.
Then in this case the separatrix has an $\omega$-limit point $x_0, \vartheta_0, u_0$.
From \eqref{eq:lyap-func-der} we can obtain that the arc of trajectory
$\tilde x(t)$, $\tilde\vartheta(t)$, $\tilde u(t)$, $t\in[0,T]$
with initial data $\tilde x(0) = x_0$, $\tilde\vartheta(0)=\vartheta_0$, $\tilde u(0)=u_0$
also consists of $\omega$-limit points and satisfies the relation
$\tilde u(t)=0$, $\forall\, t\in [0,T]$.
Then from the third equation of \eqref{sys:lorenz_like} we can obtain that
$\tilde\vartheta(t) \tilde x(t) = 0$, $\forall t\in[0,T]$.
This implies
$$
    (\tilde x(t)^2)^\bullet = 2 \, \tilde x(t) \, \tilde\vartheta(t) = 0, \quad \forall t \in [0,T].
$$
Thus, $\tilde x(t) = \rm{const}$, $\tilde\vartheta(t)=0$, $\tilde u(t)=0$, $\forall \, t \in [0,T]$.
Then it is easy to see that $\tilde x(t)$, $\tilde\vartheta(t)$, $\tilde u(t)$
are an equilibrium point.
From \eqref{eq:lyap-func-der} and the relation
$V(0,0,0) = 0 > -1/2 = V(\pm 1,0,0)$ it follow that
$\tilde x(t) = \tilde\vartheta(t) = \tilde u(t)\equiv 0$.
But in this case the trajectory $x(t),\vartheta(t),u(t)$
is a homoclinic one and $V(x(t),\vartheta(t),u(t))\equiv 0$.

Then from \eqref{eq:lyap-func-der} it follows that $u(t)\equiv 0$.
Repeating the arguments that we held earlier for
$\tilde x(t), \tilde\vartheta(t), \tilde u(t)$,
we get that $x(t)=\vartheta(t)=u(t)\equiv 0$.
The latter contradicts the assumption that $x(t),\vartheta(t),u(t)$ is a separatrix of the saddle
$x = \vartheta = u = 0$.

Thus, the separatrix $x(t),\vartheta(t),u(t)$ has no $\omega$-limit points and
tends to infinity as $t\to+\infty$.
\end{proof}
%%%%%%%%%%%%%%%%%%%%%%%%%%%%%%%%%%%%%%%%%%%%%%%%%%%%%%%%%%%%%%%%%%%%%%%%%%%%%%%%%%%%%%%%%%%%%%%%%%%%

Consider system \eqref{sys:lorenz_like} with $\lambda \geq 0$, $\beta > 0$, and assume that
\begin{equation}\label{ineq:params_1}
    \alpha(\sqrt{\lambda^2+4}+\lambda)>2(\beta-2).
\end{equation}
Inequality \eqref{ineq:params_1} implies that there exists a number $L > 0$, such that
\begin{equation}\label{ineq:params_2}
    L > \frac{\sqrt{\lambda^2 + 4} - \lambda}{2}, \qquad \frac{\beta L}{\alpha + 2 L} < 1.
\end{equation}
Introduce the notions $K = \frac{\beta L}{\alpha + 2 L} < 1$ and $M = 1 - K$.

Consider the separatrix $x^+(t)$, $\vartheta^+(t)$, $u^+(t)$ of the zero saddle
point of system \eqref{sys:lorenz_like},
where $x(t)^+>0$, $\forall t\in(-\infty,\tau)$, $\tau$ is a number,
and $\lim\limits_{t \to -\infty}x(t)^+ = 0$ (i.e. positive outgoing separatrix is considered).

\begin{lemma}\label{lemma:lem2}
    Let the following inequality holds
    \begin{equation}\label{ineq:lem1}
        x^+(t) \geq 0, \qquad \forall\,t \in (-\infty, \tau]
    \end{equation}
    and $M > 0$.
    Then there exists a number $R > 0$ (independent of parameter $\tau$) such that
    $x^+(t) \leq R$, $|\vartheta^+(t)| \leq R$, $|u^+(t)| \leq R$ for all $t \in (-\infty, \tau]$.
\end{lemma}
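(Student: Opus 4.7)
The plan is to trap the positive separatrix in a positively invariant cone and then bound $x$ via a Lyapunov-like energy for the $(x,\vartheta)$-subsystem. Writing $A := \tfrac{\alpha\beta}{2(\alpha+2L)} = \tfrac{\beta}{2}-K$ and $N := L^2+L\lambda-1$, introduce
\[
 k(t) := Lx(t)-\vartheta(t),\qquad h(t) := u(t)+Kx(t)^2,\qquad E(t) := \tfrac{1}{2}\vartheta(t)^2 - \tfrac{1}{2}x(t)^2 + \tfrac{M}{4}x(t)^4.
\]
The inequalities \eqref{ineq:params_2} deliver $N > 0$, $M > 0$ and the algebraic identity $\alpha K = 2AL$ that couples the two cone inequalities.

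The central step is to prove that $\mathcal R = \{x\geq 0,\,k\geq 0,\,h\geq 0\}$ is positively invariant for the separatrix. Differentiating along \eqref{sys:lorenz_like} gives
\[
 \dot h\big|_{h=0} = 2Ax(Lx-\vartheta) = 2Axk,\qquad \dot k\big|_{k=0} = x\,(N + h + Mx^2),
\]
both of which are $\geq 0$ on the respective boundary when the complementary inequality already holds. A standard first-exit-time contradiction then rules out leaving $\mathcal R$ while $x \geq 0$ (a putative exit with $x = 0$ would force the trajectory onto the stable manifold of the saddle, incompatible with being on the unstable separatrix). That the separatrix starts inside $\mathcal R$ is verified from the linearization at the saddle: the unstable eigenvalue $\mu_+ = (\sqrt{\lambda^2+4}-\lambda)/2$ satisfies $\mu_+ < L$ by \eqref{ineq:params_2}, giving $\vartheta/x \to \mu_+ < L$, while integrating the linearized $u$-equation along the unstable direction yields $u/x^2 \to -\beta\mu_+/(\alpha+2\mu_+) > -K$, since $t \mapsto t/(\alpha+2t)$ is strictly increasing.

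With $\vartheta \leq Lx$ and $u \geq -Kx^2$ established, differentiation gives $\dot E = -\lambda\vartheta^2 - \vartheta x h$, which is nonpositive whenever $\vartheta \geq 0$. Since $E(-\infty)=0$, the energy stays $\leq 0$ on the first $\vartheta > 0$ phase, so at the first zero $t_1$ of $\vartheta$ we have $E(t_1) = -x(t_1)^2/2 + Mx(t_1)^4/4 \leq 0$, forcing $x(t_1)^2 \leq 2/M$. On each subsequent $\vartheta < 0$ interval $\dot x = \vartheta < 0$ so $x$ can only decrease, and at the next zero of $\vartheta$ the bound $x^2 \leq 2/M$ persists; since local maxima of $x$ occur exclusively at zeros of $\vartheta$, this yields the uniform bound $x(t) \leq R_x := \sqrt{2/M}$.

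Finally, the auxiliary function $\phi := u + \tfrac{\beta}{2}x^2$ satisfies the scalar linear ODE $\dot\phi + \alpha\phi = \tfrac{\alpha\beta}{2}x^2$ with $\phi(-\infty)=0$, yielding $0 \leq \phi(t) \leq \tfrac{\beta}{2}\sup_{s\leq t}x(s)^2 \leq \beta/M$ and hence $|u(t)| \leq \beta/M$. The upper bound $\vartheta \leq LR_x$ is immediate from $k\geq 0$, while a lower bound follows by integrating $\dot E \leq R_x(\beta+2K)/M\cdot |\vartheta|$ over any $\vartheta < 0$ interval and using $\int |\vartheta|\,ds \leq R_x$, which caps $E$ and hence $\vartheta^2$ by a constant depending only on the system parameters. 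The main obstacle is the coupled invariance of $\mathcal R$ in the first step, which hinges on the precise cancellation $\alpha K = 2AL$ built into the definition of $K$ together with the strict positivity $N > 0$ — both supplied by \eqref{ineq:params_2}.
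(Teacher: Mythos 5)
Your proof is correct and follows essentially the same route as the paper's: the same trapping cone $\vartheta \le Lx$, $u \ge -Kx^2$ with $K=\beta L/(\alpha+2L)$ (your $\dot h$ and $\dot k$ computations reproduce the paper's transversality checks on $\delta_1\Phi$ and $\delta_3\Phi$), the same energy $\vartheta^2-x^2+\tfrac{M}{2}x^4$ to bound $x$, and the same linear equation for $u+\tfrac{\beta}{2}x^2$ to bound $u$. The only notable variation is that you obtain the lower bound on $\vartheta$ from the energy budget over the $\vartheta<0$ intervals, whereas the paper argues (more loosely) directly from the second and first equations of the system; both work.
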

\begin{proof}
    Define the manifold $\Phi$ as
    $$
        \Phi = \left\{ x \in [0, \, x_0], \, \vartheta \leq
        \min\left\{L x, \, \sqrt{\vartheta_0^2 + x^2 - \tfrac{M}{2} x^4}\right\}, \, u \geq - K x^2 \right\}.
    $$
    Here $\vartheta_0$ is an arbitrary positive number (e.g., $\vartheta_0 = 1$), and
    $x_0$ is a positive root of the equation
    \begin{equation}\label{eq:x0}
        \vartheta_0^2 + x^2 - \frac{M}{2} x^4 = 0.
    \end{equation}

    Inequalities \eqref{ineq:params_2}, $K > 0$ and $\vartheta \leq Lx$
    in a small vicinity of $x = \vartheta = 0$ implies that
    at a certain time interval $(-\infty,\tau_1)$, $\tau_1 < \tau$
    the separatrix $x^+(t)$, $\vartheta^+(t)$, $u^+(t)$ belongs to $\Phi$.
    In order to prove that the separatrix belongs to $\Phi$
    for all $t \in (-\infty,\,\tau]$ consider the parts of the boundary of $\Phi \cap \{ x > 0\}$
    and show that they transversal.
    These boundaries are the following surfaces or the parts of surfaces
    \begin{align*}
        &\delta_1\Phi = \left\{ (x,\vartheta,u) \in \mathbb{R}^3 ~\big|~
        x \in (0,\, x_0), \, \vartheta = L x, \, u \geq - K x^2 \right\},& \\
        &\delta_2\Phi = \left\{ (x,\vartheta,u) \in \mathbb{R}^3 ~\big|~
        x \in (0,\, x_0), \, \vartheta^2 = \vartheta_0^2 +  x^2 - \tfrac{M}{2} x^4, \, u \geq - K x^2 \right\},& \\
        &\delta_3\Phi = \left\{ (x,\vartheta,u) \in \mathbb{R}^3 ~\big|~
        x \in (0,\, x_0), \, \vartheta < L x, \, u = - K x^2 \right\},& \\
        &\delta_4\Phi = \left\{ (x,\vartheta,u) \in \mathbb{R}^3 ~\big|~
        x = x_0, \, \, \vartheta < 0 \right\}.&
    \end{align*}

    Consider a solution $x(t)$, $\vartheta(t)$, $u(t)$ of system \eqref{sys:lorenz_like},
    which at the point $t$ is on the surface $\delta_1\Phi$.
    From \eqref{ineq:params_2} it follows that
    $$
    \frac{d \vartheta}{d x} = - \lambda + \frac{1 - x^2 - u}{L}
    < - \lambda + \frac{1 - M x^2}{L} < - \lambda + \frac{1}{L},
            \qquad \forall \, x \in (0,\,x_0].
    $$
    It follows that
    $$
    \frac{d \vartheta}{d x} < L, \qquad \forall\, x \in (0,\,x_0], \quad
        v = L x, \quad u \geq - K x^2.
    $$
    Thus the surface $\delta_1\Phi$ is transversal and if $x(t)$, $\vartheta(t)$, $u(t)$
    is on the surface $\delta_1\Phi$, then for this solution there exists a number $\varepsilon(t)$
    such that $\vartheta(\tau) - L x(\tau) < 0$, $\forall \tau \in (t , t + \varepsilon(t))$.
    % From this inequality it follows that the surface
    % $v = L x$, $x \in (0,\,x_0)$, $u \geq - K x^2$ is transversal for the trajectories of
    % system \eqref{sys:lorenz_like}.

    Now consider a solution $x(t)$, $\vartheta(t)$, $u(t)$ of system \eqref{sys:lorenz_like},
    which at the point $t$ is on the surface $\delta_2\Phi$ and consider the function
    $V(x, \vartheta) = \vartheta^2 - x^2 + \frac{M}{2} x^4$.
    On the set $\delta_2\Phi$ the following relations hold
    $$
        V = 0, \qquad \dot{V}(x,\vartheta) =
        -2 \lambda \vartheta^2(t) - 2 \vartheta(t) x(t) \big(u(t) + K x^2(t)\big) < 0.
    $$
    This implies transversality of $\delta_2\Phi$ and if $x(t)$, $\vartheta(t)$, $u(t)$
    is on the surface $\delta_2\Omega$, then for this solution there exists a number $\varepsilon(t)$
    such that $V(x(\tau),\vartheta(\tau)) < 0$, $\forall \tau \in (t , t + \varepsilon(t))$.

    Consider a solution $x(t)$, $\vartheta(t)$, $u(t)$ of system \eqref{sys:lorenz_like},
    which at the point $t$ is on the surface $\delta_3\Phi$.
    Then
    $$
    (u + K x^2)^\bullet = -\alpha u - \beta x \vartheta + 2 K x \vartheta =
    x ((-\beta + 2 K) v + \alpha K x) \, = \,
    \frac{\alpha \beta x}{\alpha + 2 L}(L x - v) > 0.
    $$
    This implies transversality of $\delta_3\Phi$ and if $x(t)$, $\vartheta(t)$, $u(t)$
    is on the surface $\delta_2\Phi$, then for this solution there exists a number $\varepsilon(t)$
    such that $u(\tau) + K x^2(\tau) > 0$, $\forall \tau \in (t, t+\varepsilon(t))$.
    Transversality of $\delta_4\Phi$ is obvious.

    From the relations proved above and the obvious inequality $\dot{x}(t) < 0$ for
    $x(t) = x_0$, $\vartheta(t) < 0$ it follows that the separatrix
    $\left(x^+(t),\,\vartheta^+(t),\,u^+(t)\right)$
    belongs to $\Phi$ for all $t \in (-\infty, \, \tau]$.

    Notice that the third equation of system \eqref{sys:lorenz_like} yields the relations
    $$
    (u + \frac{\beta}{2} x^2)^\bullet + \alpha (u + \frac{\beta}{2} x^2) \,=\,
        \frac{\alpha \beta}{2} x^2.
    $$
    Taking into account the boundedness of $x^+(t)$, i.e. $x^+(t) \in (0,\,x_0)$
    for all $t \in (-\infty,\,\tau]$, it follows the boundedness of $u^+(t)$ on $(-\infty,\,\tau]$:
    \begin{equation*}
        u^+(t) + \frac{\beta}{2}(x^+(t))^2 \leq \frac{\beta}{2}x_0^2,
        \quad \forall t \in (-\infty,\,\tau].
    \end{equation*}
    Hence, we have the estimate
    \begin{equation}\label{ineq:boundedness_u0}
        u^+(t) \leq \frac{\beta}{2}x_0^2,
        \quad \forall t \in (-\infty,\,\tau].
    \end{equation}

    The second equation of system \eqref{sys:lorenz_like} and boundedness of $x^+(t)$ and $u^+(t)$
    on $(-\infty,\,\tau]$ yields the boundedness of $\vartheta^+(t)$ for $\lambda > 0$ and boundedness of
    $\dot{\vartheta}^+(t)$ for $\lambda = 0$.
    From the first equation of the system \eqref{sys:lorenz_like} and from the boundedness
    of $x^+(t)$ and $\dot{\vartheta}^+(t)$ it follows the boundedness of
    $\vartheta^+(t)$ on $(-\infty,\,\tau]$.
    This implies the assertion of the lemma.
\end{proof}

\begin{lemma}\label{lemma:lem3}
    Suppose inequality \eqref{ineq:params_1} and the following inequality
    \begin{equation}\label{ineq:lem3:cond}
        \lambda^2 > 4 \left[\left(1 + \frac{\beta}{2}\right) \, x_0^2 - 1\right]
    \end{equation}
    hold, where $x_0$ -- is the positive root of equation \eqref{eq:x0}.
    Then $x^+(t) > 0$, $\forall \, t \in (-\infty, +\infty)$.
\end{lemma}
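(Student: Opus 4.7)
The plan is to argue by contradiction. Suppose $\tau<+\infty$, so that $x^+(\tau)=0$ while $x^+(t)>0$ on $(-\infty,\tau)$. Since $x^+\ge 0$ on $(-\infty,\tau]$, Lemma~\ref{lemma:lem2} applies on this interval, and the separatrix remains in the trapping region $\Phi$. The first step is to combine the estimate $u^+(t)+\tfrac{\beta}{2}(x^+(t))^2\le\tfrac{\beta}{2}x_0^2$ extracted from the proof of Lemma~\ref{lemma:lem2} with the containment $x^+(t)\in[0,x_0]$ to produce the upper bound
\[
u^+(t)+(x^+(t))^2-1\;\le\;\left(1+\tfrac{\beta}{2}\right)x_0^2-1,\qquad\forall\,t\in(-\infty,\tau].
\]

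The second step is to combine the first two equations of \eqref{sys:lorenz_like} to view $x^+$ as a solution of the scalar second-order equation $\ddot x^++\lambda\dot x^++\bigl((x^+)^2+u^+-1\bigr)x^+=0$, and perform the standard damping-removing substitution $\xi(t):=e^{\lambda t/2}x^+(t)$. A direct computation yields
\[
\ddot\xi(t)\;=\;\left(\tfrac{\lambda^2}{4}+1-u^+(t)-(x^+(t))^2\right)\xi(t).
\]
By the previous estimate and hypothesis~\eqref{ineq:lem3:cond}, the coefficient of $\xi$ on the right is strictly positive for every $t\in(-\infty,\tau]$; hence $\ddot\xi>0$ wherever $\xi>0$, and in particular $\dot\xi$ is strictly increasing on $(-\infty,\tau]$.

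The endgame is a short disconjugacy argument. We have $\xi>0$ on $(-\infty,\tau)$ and $\xi(\tau)=0$; since $x^+(t),\vartheta^+(t)\to 0$ at the saddle as $t\to-\infty$ and $\lambda\ge 0$, both $\xi(t)\to 0$ and $\dot\xi(t)=e^{\lambda t/2}\bigl(\tfrac{\lambda}{2}x^+(t)+\vartheta^+(t)\bigr)\to 0$ as $t\to-\infty$. Strict monotonicity of $\dot\xi$, together with this last limit, forces $\dot\xi(t)>0$ for every finite $t\in(-\infty,\tau]$: otherwise $\dot\xi(t_0)\le 0$ at some $t_0$, and strict monotonicity would push $\lim_{t\to-\infty}\dot\xi(t)$ strictly below $0$. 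But $\xi$ drops from strictly positive values to $\xi(\tau)=0$, so $\dot\xi(\tau)\le 0$, which is the desired contradiction.

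The main delicate point will be confirming that the quantity $(1+\beta/2)x_0^2-1$ occurring in \eqref{ineq:lem3:cond} genuinely dominates $u^+(t)+(x^+(t))^2-1$ throughout the trapping region of Lemma~\ref{lemma:lem2}; once this matching is in place, the exponential substitution converts the dissipative equation for $x^+$ into a linear second-order equation with strictly positive coefficient, and the Sturm-type convexity argument above rules out any finite time at which the positive separatrix could return to $x=0$.
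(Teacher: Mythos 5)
Your proof is correct, but it travels a different road than the paper. The paper also argues by contradiction from a first zero $x^+(\tau)=0$ and also imports the bounds $x^+\in[0,x_0]$, $u^+\le\tfrac{\beta}{2}x_0^2$ from the trapping region of Lemma~\ref{lemma:lem2}; but from there it works in the $(x,\vartheta)$ phase plane: it introduces the line $\vartheta=-Px$ with $P=\tfrac{\lambda}{2}+\sqrt{\tfrac{\lambda^2}{4}-D}$, $D=\bigl(1+\tfrac{\beta}{2}\bigr)x_0^2-1$, notes that reaching $x=0$ forces a first crossing time $T$ of this line, and derives a contradiction from the transversality estimate $\tfrac{d}{dx}(\vartheta+Px)>P-\lambda+\tfrac{D}{P}=0$, condition \eqref{ineq:lem3:cond} entering as the reality of the root $P$ of $P^2-\lambda P+D=0$. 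You instead recast the first two equations as the damped oscillator $\ddot x^++\lambda\dot x^++\bigl((x^+)^2+u^+-1\bigr)x^+=0$, strip the damping with $\xi=e^{\lambda t/2}x^+$, and run a Sturm-type convexity argument, with \eqref{ineq:lem3:cond} entering as strict positivity of the coefficient $\tfrac{\lambda^2}{4}+1-u^+-(x^+)^2\ge\tfrac{\lambda^2}{4}-D>0$. These are two faces of the same disconjugacy condition (your positive coefficient is exactly the discriminant condition that makes the paper's $P$ real), but your version has some advantages: it handles cleanly the degenerate touching case $\vartheta^+(\tau)=0$, where the paper's claim that a crossing time $T$ exists ``for any $P>0$'' needs a separate word, and the boundary behavior as $t\to-\infty$ is dispatched by the explicit limits $\xi,\dot\xi\to 0$. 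The one step you flag as delicate --- that $u^++(x^+)^2-1\le D$ throughout the trapping region --- is in fact immediate from $u^+\le\tfrac{\beta}{2}x_0^2$ (relation \eqref{ineq:boundedness_u0}) and $(x^+)^2\le x_0^2$, and is precisely the estimate the paper itself uses.
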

\begin{proof}
    Here the conditions of Lemma~\ref{lemma:lem2}.
    Therefore, if $x^+(t) > 0$, $\forall \, t \in (-\infty, \tau)$, then $x^+(t) \in \Omega$,
    $\forall \, t \in (-\infty, \tau)$ and relation \eqref{ineq:boundedness_u0} is satisfies.
    If $x^+(\tau) = 0$, then there exists a time moment $T < \tau$ such that
    for any $P > 0$
    \begin{equation}\label{eq:lem3:eq1}
        \vartheta^+(T) = - P x^+(T), \quad
        \vartheta^+(t) > - P x^+(t), \quad \forall \, t \in (-\infty, \, T).
    \end{equation}
    For the relation $\vartheta(T) = - P x(T)$ we have the following
    $$
        \frac{d \vartheta}{d x} \, > \, - \lambda + \frac{D}{P}, \qquad
        D = \left(1 + \frac{\beta}{2}\right) \, x_0^2 - 1.
    $$

    It is clear that if $P \,=\, \frac{\lambda}{2}
    + \sqrt{\frac{\lambda^2}{4} - D}$, then
    \begin{equation}\label{ineq:lem3:ineq1}
        \frac{d}{d x}(\vartheta + P x) > P - \lambda + \frac{D}{P} = 0
    \end{equation}
    on $\Omega$.
    Here we use condition \eqref{ineq:lem3:cond}.

    From \eqref{ineq:lem3:ineq1} it follows that
    \begin{equation*}
        (\vartheta(T)^+)^\bullet+P(x(T)^+)^\bullet>0.
    \end{equation*}
    It follows that there is no $T < \tau$, such that $v^+(T) = -P \, x^+(T)$,
    which contradicts relations \eqref{eq:lem3:eq1}.
    This implies Lemma~\ref{lemma:lem3}.
\end{proof}

The obtained lemmas and the Fishing principle (see Theorem~\ref{thm:fish_princ})
allows us to formulate for system \eqref{sys:lorenz_like}
the main analytical result of the article.
\begin{theorem}\label{thm:leonov}
    Consider a smooth path $\lambda(s)$, $\alpha(s)$, $\beta(s)$, $s \in [0,~1)$ in
    the parameter space of system \eqref{sys:lorenz_like}.
    Let
    \begin{equation}\label{cond:leonov:alpha_lambda}
        \begin{aligned}
             \lambda(0) = 0,& \quad \lim\limits_{s\to 1}\lambda(s)=+\infty,\\
             \limsup\limits_{s\to 1}\alpha(s)<+\infty,&\quad
        \limsup\limits_{s\to 1}\beta(s)<+\infty
        \end{aligned}
    \end{equation}
    and the following condition holds
    \begin{equation}\label{cond:leonov:beta}
        \alpha(s)(\sqrt{\lambda(s)^2+4}+\lambda(s))>2(\beta(s)-2),\,\,\,\forall s\in[0,1].
    \end{equation}
    Then there exists $s_0 \in (0,1)$ such that system \eqref{sys:lorenz_like} with
    $\alpha(s_0)$, $\beta(s_0)$, $\lambda(s_0)$ has a homoclinic trajectory.
\end{theorem}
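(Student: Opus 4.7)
The plan is to apply the Fishing Principle (Theorem~\ref{thm:fish_princ}) to a suitable truncation and reparameterisation of the path $\gamma(s)=(\lambda(s),\alpha(s),\beta(s))$. I would first choose $s^*\in(0,1)$ close enough to $1$ that the hypotheses of Lemma~\ref{lemma:lem3} hold at $s=s^*$; such $s^*$ exists because, under \eqref{cond:leonov:alpha_lambda}--\eqref{cond:leonov:beta}, the auxiliary parameter $L$ used in Lemmas~\ref{lemma:lem2}--\ref{lemma:lem3} can be chosen just above $(\sqrt{\lambda(s)^2+4}-\lambda(s))/2$, so that $K(s)\to 0$, $M(s)\to 1$ and $x_0(s)$ stays uniformly bounded (near $\sqrt{2}$ once $\vartheta_0$ is fixed small), while $\lambda(s)^2\to\infty$ eventually dominates the bounded right-hand side of \eqref{ineq:lem3:cond}. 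Reparameterising $[0,s^*]$ onto $[0,1]$ and then invoking Theorem~\ref{thm:fish_princ} would deliver the sought-after interior value $s_0$.

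For the fishing manifold I would take a bounded rectangle in the half-plane $\{x=0,\ \vartheta\le 0\}$ with $S_0$ on its boundary, namely
\[
\Omega=\bigl\{(0,\vartheta,u):-R'\le\vartheta\le 0,\ -R'\le u\le R'\bigr\},
\]
with $R'$ chosen larger than the uniform a~priori bound from Lemma~\ref{lemma:lem2} on $(|\vartheta^+|,|u^+|)$ over the compact parameter range $[0,s^*]$. On the interior of $\Omega$ one has $\dot x=\vartheta<0$, which is the transversality condition~(i); the saddle $S_0=(0,0,0)$ lies on the edge $\vartheta=0$ of $\partial\Omega$, yielding~(ii); and the uniform bound keeps first crossings away from the outer edges $\vartheta=-R'$, $u=\pm R'$. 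The only remaining boundary points of $\Omega$ distinct from $S_0$ lie on the $u$-axis segment $\{\vartheta=0,\ u\ne 0\}$, which is contained in the stable manifold of $S_0$, so a separatrix whose first hit of $\Omega$ approached such a point would already be homoclinic; combined with the a~priori bound this yields~(v).

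The two endpoint hypotheses reduce to the key lemmas. At $s=0$ one has $\lambda(0)=0$, so Lemma~\ref{lemma:lem1} shows that the positive outgoing separatrix tends to infinity as $t\to+\infty$, while Lemma~\ref{lemma:lem2} (applicable since \eqref{cond:leonov:beta} at $s=0$ reduces to \eqref{ineq:params_1}) confines that separatrix to the bounded set $\Phi$ as long as $x^+(t)\ge 0$. The two together force $x^+(t)$ to vanish at some finite time; at the first such moment $\dot x=\vartheta^+<0$ and $|u^+|,|\vartheta^+|<R'$, so the crossing point lies in the interior of $\Omega$, giving~(iii). At $s=s^*$ Lemma~\ref{lemma:lem3} gives $x^+(t)>0$ for all $t$, so the separatrix never meets $\Omega$, giving~(iv). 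Theorem~\ref{thm:fish_princ} then produces an interior value of the reparameterised path at which the separatrix is homoclinic.

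The main obstacle, I expect, is making the estimates of Lemmas~\ref{lemma:lem2}--\ref{lemma:lem3} uniform in $s$: one has to choose $L(s)$ and $\vartheta_0$ so that the constraint $K(s)<1$ of Lemma~\ref{lemma:lem2} and the constraint \eqref{ineq:lem3:cond} of Lemma~\ref{lemma:lem3} hold simultaneously at $s=s^*$. This is where the hypothesis that $\lambda(s)\to\infty$ with $\alpha(s),\beta(s)$ bounded is essential, since it lets $L,K\to 0$ and $M\to 1$ while $x_0(s)$ remains bounded near $\sqrt 2$, so that the bounded right-hand side of \eqref{ineq:lem3:cond} is outgrown by $\lambda(s)^2$. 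The other subtlety is the verification of~(v) near the $u$-axis segment of $\partial\Omega$, which requires continuous dependence on $s$ together with the linear structure at $S_0$ to argue that first crossings either tend to $S_0$ or remain uniformly away from that edge.
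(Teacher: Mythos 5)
Your proposal follows essentially the same route as the paper: the Fishing principle applied to a truncated path with the cross-section $\Omega\subset\{x=0,\ \vartheta\le 0\}$ containing $S_0$ on its edge, Lemmas~\ref{lemma:lem1} and~\ref{lemma:lem2} giving condition~\ref{fish_princ:cond3} at $s=0$, Lemma~\ref{lemma:lem3} giving condition~\ref{fish_princ:cond4} near $s=1$, and the invariant $u$-axis handling condition~\ref{fish_princ:cond5}. In fact you supply more detail than the paper's sketch, notably the uniformity argument showing why \eqref{ineq:lem3:cond} is eventually satisfied as $\lambda(s)\to\infty$ with $\alpha(s),\beta(s)$ bounded.
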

\begin{proof}
    Here we present the sketch of the proof using the Fishing principle (Theorem~\ref{thm:fish_princ}),
    and Lemmas \ref{lemma:lem1}, \ref{lemma:lem2}, \ref{lemma:lem3}.
    We choose the set $\Omega$ as follows
    $$
        \Omega=\left\{ (x,\vartheta,u) \in \mathbb{R}^3 ~\big|~ x=0, \vartheta\le 0,
        \vartheta^2+u^2\le R^2\right\},
    $$
    where $R$ is a sufficiently large positive number.
    Conditions \ref{fish_princ:cond1} and \ref{fish_princ:cond2}
    in Theorem~\ref{thm:fish_princ} are satisfied
    for any $s\in[0,1)$.

    Lemmas~\ref{lemma:lem1} and~\ref{lemma:lem2} imply that,
    for $s=0$ condition \ref{fish_princ:cond3} in Theorem~\ref{thm:fish_princ} holds,
    while Lemmas~\ref{lemma:lem1} and~\ref{lemma:lem3}
    imply that, for $s = s_1$ sufficiently close to $1$
    condition \ref{fish_princ:cond4} in Theorem~\ref{thm:fish_princ} is satisfied.

    Condition \ref{fish_princ:cond5} holds, since system \eqref{sys:lorenz_like} has the solution
    \[
        x(t) \equiv \vartheta(t) \equiv 0, \quad u(t) = u(0)\exp(-\alpha t),
    \]
    which satisfies
    \[
        \lim_{t \to -\infty} u(t) = \infty.
    \]
    Consequently, for large $|t|$, $t < 0$, the solutions with initial data from
    a small neighborhood of the point $x = \vartheta = 0$, $u = u_0$ leave the cylinder
    $\left\{ (x,\vartheta,u) \in \mathbb{R}^3 ~\big|~ \vartheta^2 + u^2 \leq R^2\right\}$,
    where $R$ is a sufficiently large positive number.
    Therefore, by Lemma~\ref{lemma:lem1}, condition~\ref{fish_princ:cond5}
    in Theorem~\ref{thm:fish_princ} holds.
    % (Подробнее об этом эффекте смотри [26--31].)

    Hence, a path with $s \in [0, s_1]$ satisfies the conditions of Theorem~\ref{thm:fish_princ},
    and therefore there exists $s_0$,
    for which the assertion of the Theorem~\ref{thm:leonov} holds.
\end{proof}

\begin{corollary}
If $\beta(s) \in (0, 2]$ and conditions \eqref{cond:leonov:alpha_lambda}
for any $s \in [0,1]$ then there exists $s_0 \in (0,1)$
such that system \eqref{sys:lorenz_like} with
$\alpha(s_0)$, $\beta(s_0)$, $\lambda(s_0)$ has a homoclinic trajectory.
\end{corollary}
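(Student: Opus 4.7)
The plan is to show that the hypothesis $\beta(s)\in(0,2]$ already forces the extra condition \eqref{cond:leonov:beta} from Theorem~\ref{thm:leonov} to hold along the entire path, so the corollary reduces to a direct application of that theorem.

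First I would note that under the standing positivity assumptions on the parameters we have $\alpha(s)>0$ for all $s\in[0,1]$, while the conditions \eqref{cond:leonov:alpha_lambda} in particular imply $\lambda(s)\ge 0$ throughout the path (since $\lambda(0)=0$ and $\lambda$ tends to $+\infty$; in fact the relevant inequality only uses $\sqrt{\lambda^2+4}+\lambda>0$, which holds for any real $\lambda$). Consequently the left-hand side of \eqref{cond:leonov:beta},
\[
\alpha(s)\bigl(\sqrt{\lambda(s)^2+4}+\lambda(s)\bigr),
\]
is strictly positive for every $s\in[0,1]$.

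Next, since $\beta(s)\le 2$ by assumption, the right-hand side $2(\beta(s)-2)$ is non-positive for every $s\in[0,1]$. Combining the two observations yields
\[
\alpha(s)\bigl(\sqrt{\lambda(s)^2+4}+\lambda(s)\bigr) \;>\; 0 \;\ge\; 2(\beta(s)-2),\qquad \forall\,s\in[0,1],
\]
so inequality \eqref{cond:leonov:beta} holds on the entire path without any further work.

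With \eqref{cond:leonov:alpha_lambda} assumed and \eqref{cond:leonov:beta} now verified, all hypotheses of Theorem~\ref{thm:leonov} are satisfied. Applying that theorem directly produces the desired $s_0\in(0,1)$ for which system \eqref{sys:lorenz_like} with parameters $\alpha(s_0),\beta(s_0),\lambda(s_0)$ admits a homoclinic trajectory. There is essentially no obstacle here: the corollary is really the observation that the range $\beta\in(0,2]$ is the ``easy'' half of the parameter space where the auxiliary inequality of the main theorem is automatic, and this covers in particular the Shimizu--Morioka case $\beta=2$ highlighted in the introduction.
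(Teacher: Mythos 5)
Your proposal is correct and matches the intended derivation: the paper gives no explicit proof (it only cites \cite{Leonov-2015-ND}), but the corollary is exactly the observation that for $\beta(s)\in(0,2]$ the right-hand side $2(\beta(s)-2)$ of \eqref{cond:leonov:beta} is nonpositive while the left-hand side $\alpha(s)\bigl(\sqrt{\lambda(s)^2+4}+\lambda(s)\bigr)$ is strictly positive, so Theorem~\ref{thm:leonov} applies directly.
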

This result was obtained and proved previously in \cite{Leonov-2015-ND}.
\begin{corollary}\label{corollary:leonov}
Of particular interest to this study is the following path
\begin{equation}\label{eq:path-param}
    \lambda(s) = \frac{s}{\sqrt{1 - s}}, \qquad
    \alpha(s) = \delta \, \sqrt{1 - s}, \qquad \beta(s) \equiv \beta \in (0,~2 + \delta), \qquad
    s \in [0,~1), \qquad \delta \in (0,~1].
\end{equation}
This path satisfies all conditions of Theorem~\ref{thm:leonov}, and therefore
there exists a number $s_0 \in (0,~1)$ such that system \eqref{sys:lorenz_like}
with parameters \eqref{eq:path-param}
and $s = s_0$ has a homoclinic orbit.
\end{corollary}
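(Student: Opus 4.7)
The plan is to verify, one by one, that the path in \eqref{eq:path-param} satisfies every hypothesis of Theorem~\ref{thm:leonov}, and then invoke the theorem directly. The boundary conditions in \eqref{cond:leonov:alpha_lambda} are immediate from the explicit formulas: at $s=0$ one reads off $\lambda(0)=0$; as $s\to 1^{-}$, $\lambda(s)=s/\sqrt{1-s}\to+\infty$, while $\alpha(s)=\delta\sqrt{1-s}\to 0$ and $\beta(s)\equiv\beta$ is constant, so both $\alpha$ and $\beta$ are bounded above.

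The only nontrivial verification is the key inequality \eqref{cond:leonov:beta}. The strategy is to simplify the left-hand side and show it is constant in $s$. First I would rewrite
\[
\sqrt{\lambda(s)^2+4}=\sqrt{\frac{s^{2}+4(1-s)}{1-s}}=\sqrt{\frac{(s-2)^{2}}{1-s}}=\frac{2-s}{\sqrt{1-s}},
\]
using $s\in[0,1)$ so that $|s-2|=2-s$. Adding $\lambda(s)=s/\sqrt{1-s}$ and multiplying by $\alpha(s)=\delta\sqrt{1-s}$ telescopes the $\sqrt{1-s}$ factors, giving
\[
\alpha(s)\bigl(\sqrt{\lambda(s)^{2}+4}+\lambda(s)\bigr)=\delta\sqrt{1-s}\cdot\frac{(2-s)+s}{\sqrt{1-s}}=2\delta
\]
identically in $s$. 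Hence \eqref{cond:leonov:beta} reduces to the single arithmetic inequality $2\delta>2(\beta-2)$, i.e.\ $\beta<2+\delta$, which is precisely the standing assumption $\beta\in(0,\,2+\delta)$.

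With all hypotheses of Theorem~\ref{thm:leonov} verified for the path \eqref{eq:path-param}, the theorem yields some $s_{0}\in(0,1)$ at which system \eqref{sys:lorenz_like} possesses a homoclinic trajectory to $S_0$. The whole corollary is therefore a purely algebraic check; I do not anticipate any real obstacle — the only subtle point is the simplification of the square root, where one must remember that $s-2$ is negative on the relevant interval so that taking the positive root gives $2-s$ rather than $s-2$. No additional analytic input beyond Theorem~\ref{thm:leonov} (and, implicitly, Lemmas~\ref{lemma:lem1}--\ref{lemma:lem3}) is required.
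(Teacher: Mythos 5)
Your verification is correct and is exactly the (unstated) argument the paper intends: the corollary is asserted without a written proof, and the key identity $\sqrt{\lambda(s)^2+4}=\tfrac{2-s}{\sqrt{1-s}}$, which collapses condition \eqref{cond:leonov:beta} to $2\delta>2(\beta-2)$, is implicitly confirmed by the eigenvalue formulas $\lambda^{\rm u}=\sqrt{1-s}$ and $\lambda^{\rm ss}=-\tfrac{1}{\sqrt{1-s}}$ that the paper records immediately after the corollary. Your check of the limit conditions \eqref{cond:leonov:alpha_lambda} is likewise complete, so nothing is missing.
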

In this case, conditions \eqref{eq:path-param} describe
the region of the parameters
$\mathcal{B}_{\delta,\beta} = \{~(\delta,\beta)~\big\vert~\delta \in (0,\,1.1],~\beta \in (0, 2 + \delta)\}$
in the parameter plane $(\delta, \beta)$ (see Fig.~\ref{fig:delta_beta_region}).
\begin{figure}[h!]
    \centering
    \includegraphics[width=0.5\textwidth]{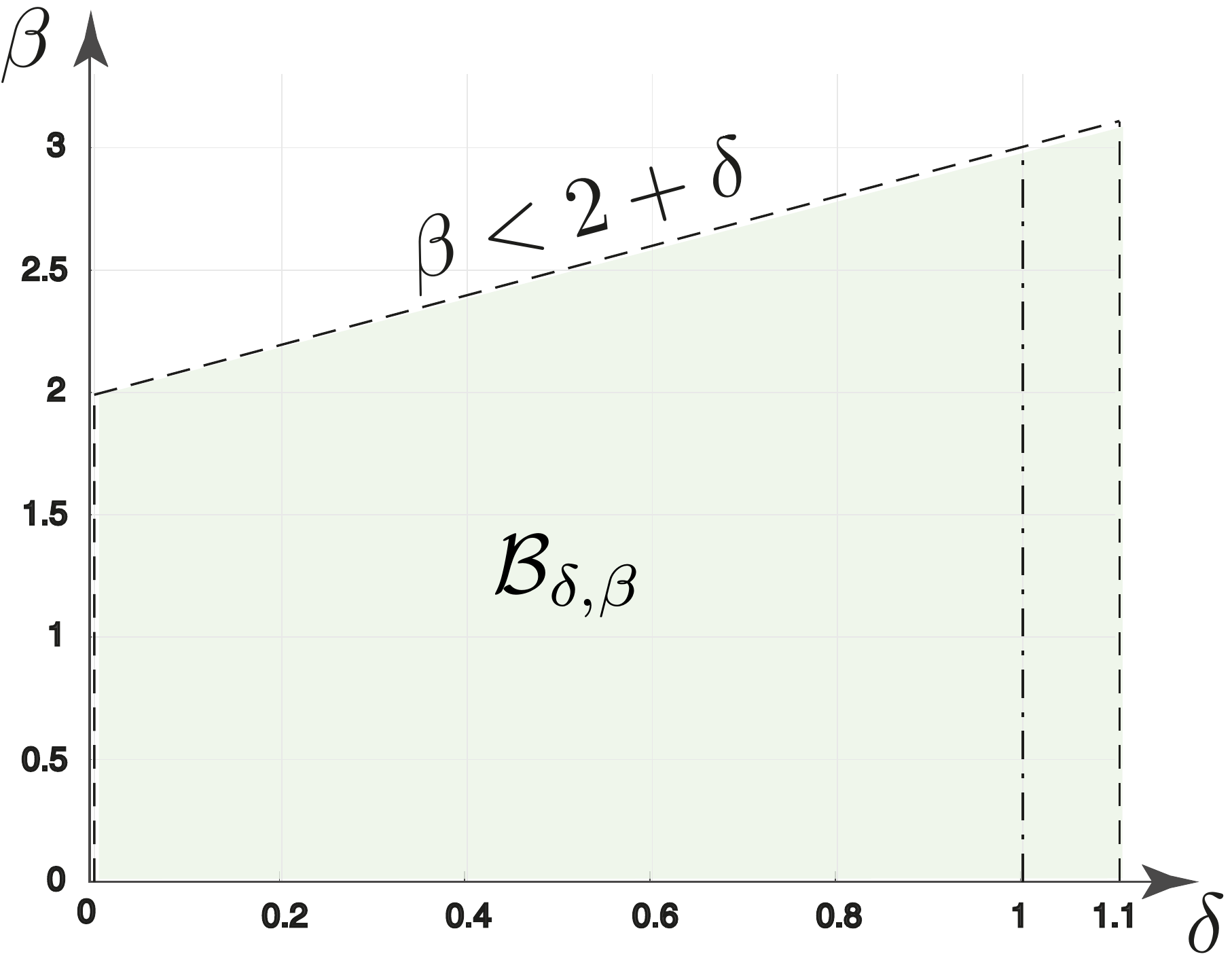}
    \caption{Region of parameters $\mathcal{B}_{\delta,\beta}$ (light green)
    in the plane $(\delta, \beta)$,
    for which there exists a homoclinic trajectory
    in system~\eqref{sys:lorenz_like}.}
  \label{fig:delta_beta_region}
\end{figure}
The eigenvalues and eigenvectors of the matrix of the linear part of \eqref{sys:lorenz_like}
at the saddle $S_0$ have the following form:
\begin{equation}
    \begin{aligned}
        & \lambda^{\rm s} = -\alpha = -\delta\sqrt{1-s},
        &\quad &{\rm v}^{\rm s} = (0,\ 0,\ 1), \\
        & \lambda^{\rm ss} = \tfrac{1}{2}\big(-\sqrt{\lambda^2 + 4} - \lambda\big) = - \tfrac{1}{\sqrt{1-s}},
        &\quad &{\rm v}^{\rm ss} =
        % \big(\tfrac{1}{2}\big(-\sqrt{\lambda^2 + 4} - \lambda\big), 1, 0\big) =
        \big(-\!\sqrt{1-s},\ 1,\ 0\big), \\
        & \lambda^{\rm u} = \tfrac{1}{2}\big(\sqrt{\lambda^2 + 4} - \lambda\big) = \sqrt{1-s},
        &\quad &{\rm v}^{\rm u} = \big(\tfrac{1}{\sqrt{1-s}},\ 1,\ 0\big),
    \end{aligned}
\end{equation}
where ${\rm v}^{\rm s}$, ${\rm v}^{\rm ss}$, ${\rm v}^{\rm u}$ are
mutually perpendicular and the saddle value
$\sigma_0 = \lambda^{\rm u} + \lambda^{\rm s} = (1 - \delta) \sqrt{1-s} \geq 0$
% \[
% \]
is zero if $\delta = 1$ and positive if $\delta \in (0,1)$.
The equilibrium $S_0$ has stable and unstable local invariant manifolds
$W_{\rm loc}^{\rm s}$ и $W_{\rm loc}^{\rm u}$ of dimension
$\dim W_{\rm loc}^{\rm s} = 2$ and $\dim W_{\rm loc}^{\rm u} = 1$, respectively,
intersecting at $S_0$.

\begin{remark}
The result of Corollary~\ref{corollary:leonov}
for path \eqref{eq:path-param} with $\beta \in (0,2)$ and $\delta = 1$
was proved in \cite{Leonov-2015-DAN-Homo,Leonov-2015-ND} and repeated in \cite{OvsyannikovT-2017}
taking into account transformation \eqref{eq:transform:leonov_ovsyannikov}.
\end{remark}

All the homoclinic bifurcations considered in
\cite{Leonov-2012-PLA,Leonov-2012-DM,Leonov-2012-DRAN,Leonov-2013-DAN-ChenLu,Leonov-2013-IJBC,
Leonov-2015-DAN-Homo,Leonov-2015-ND}
are described by the following two scenarios:
either the change of attracting equilibria
for separatrices of the saddle,
or the merging of two stable limit cycles
into one stable limit cycle.
Here, using numerical simulations, we describe for $\delta < 1$
several new homoclinic bifurcations.

\section{Numerical analysis of stability/instability of homoclinic butterfly
in the Lorentz-like system}

Homoclinic bifurcation phenomena is related to the mathematical description of the transition to chaos
know in literature as Shilnikov chaos.
Numerical analysis and visualization of Shilnikov chaos is a difficult task,
since it requires the study of unstable structures that are sensitive to errors in numerical methods.

In this article, to analyze the scenarios of homoclinic bifurcations and
possible onset of chaotic behavior
we perform a simple numerical scanning of the parameter region
$\mathcal{B}_{\delta,\beta} = \{(\delta,\beta)\,\big\vert\,\delta \in (0,\,1.1],\,\beta \in (0, 2 + \delta)\}$
(see Fig.~\ref{fig:delta_beta_region})
and for a fixed pair $(\delta, \beta) \in \mathcal{B}_{\delta,\beta}$
we calculate the approximate interval $[\underline{s} \,,\overline{s}] \subset (0,1)$,
such that for $s_0 \in [\underline{s} \,,\overline{s}]$
there exist a homoclinic orbit.
We select the grid of points $B_{\rm grid} \subset \mathcal{B}_{\delta,\beta}$
with the predefined step $\delta_{\rm grid}$ and for each point
$(\delta_{\rm curr}, \beta_{\rm curr}) \in B_{\rm grid}$
we choose the partition
$0 < s_{\rm step}^0 < 2s_{\rm step}^0 < 3s_{\rm step}^0 < \dots, (N-1)s_{\rm step}^0 < 1$
of the interval $(0,1)$ with step $s_{\rm step}^0 = \tfrac{1}{N}$.
For the system \eqref{sys:lorenz_like} with parameters
$\delta_{\rm curr}, \beta_{\rm curr}, \lambda(s_{\rm curr}), \alpha(s_{\rm curr})$
we will simulate separatrix
$(x_{\rm sepa}(t), \vartheta_{\rm sepa}(t), u_{\rm sepa}(t))$
of the saddle $S_0$ of the system \eqref{sys:lorenz_like} by integrating them numerically on the
chosen time interval
$t \in [0, T_{\rm trans}]$ using the implementation of the numerical procedure
\texttt{ode45} for solving differential equations in {\rm MATLAB}.

To determine possible existence of limit sets (stable limit cycles and attractors) we will
also numerically integrate trajectories $x_{\rm lim}(t), \vartheta_{\rm lim}(t), u_{\rm lim}(t)$
with initial data $\left(x_{\rm lim}(0), \vartheta_{\rm lim}(0), u_{\rm lim}(0)\right) =
\left(x_{\rm sepa}(T_{\rm trans}), \vartheta_{\rm sepa}(T_{\rm trans}), u_{\rm sepa}(T_{\rm trans})\right)$
on the chosen interval $t \in [0, T_{\rm lim}]$.
Resulting trajectories $(x_{\rm sepa}(t), \vartheta_{\rm sepa}(t), u_{\rm sepa}(t))$ and
$(x_{\rm lim}(t), \vartheta_{\rm lim}(t), u_{\rm lim}(t))$
will be colored according to the color scale from blue to red,
corresponding to the integration time interval
(this will help us to determine the twisting~/~untwisting of the trajectory).
Note that due to symmetry of the system \eqref{sys:lorenz_like} it is enough to integrate only one separatrix
$\Gamma^+(t) = (x_{\rm sepa}(t), \vartheta_{\rm sepa}(t), u_{\rm sepa}(t))$ and the second one can be expressed as
$\Gamma^-(t) := (-x_{\rm sepa}(t), -\vartheta_{\rm sepa}(t), u_{\rm sepa}(t))$.
When equilibria $S_\pm$ are saddle-foci, we also simulate the separatrix of
the $S_+$ in the described above manner.

In numerical integration of trajectories via \texttt{ode45}
we use the event handler {\rm ODE Event Location}
and handle the following events:
\begin{itemize}[label=$\bullet$]
    \item {\it separatrix $\Gamma^+(t)$ tends to infinity}.
    For the values of parameter $s$ close to $0$ system \eqref{sys:lorenz_like}
    is not dissipative in the sense of Levinson, and
    the separatrix of the saddle $S_0$ will slowly untwist to ''infinity''.
    Therefore, the procedure will be terminated if the separatrix leaves the sphere with
    a big enough radius $R_{\rm inf}$.
    \item {\it separatrix $\Gamma^+(t)$ tends to equilibrium $S_+$ (or $\Gamma^-(t)$ to $S_-$),
    towards which it was released}.
    If for some $s = s_{\rm cr}$ the separatrix tends to nearest equilibrium state,
    then for $s > s_{\rm cr}$ there will be no other bifurcations.
    % (see the change in the stability region $S_\pm$ with increasing parameter $s$
    % in fig.~\ref{fig:s12_stab_region})
    At this point it is possible to terminate the scanning by parameter $s$
    and start modeling of the next pair $(\delta, \beta)$.
    To examine the attraction to the equilibrium state it was detected the event of
    falling into its small neighborhood of the radius $\varepsilon_{\rm eq}$.
\end{itemize}

% Also, to speed up the procedure for calculating the grid of points $B_{\rm grid}$,
% we will use the set of tools for parallel computing in {\rm MATLAB}  --
% {\rm Parallel Computing Toolbox}.
% During the procedure with parallel computations, images with behavior of
% trajectories in the phase space will be saved to disk as {\rm FIG}-file {(*.fig)}.
% This, one the one hand, will avoid simultaneous using of graphic rendering of the system
% by parallel threads, and on the other hand it will allow later to
% examine and process the image in the {\rm MATLAB} editor.
% Converting {\rm FIG} -files to pictures of the desired extension
% (for example, Encapsulated PostScript, JPEG) we will carry out in a separate process.

If for a fixed pair $(\delta, \beta)$ during the scanning of the interval $(0,1)$
with the step $s_{\rm step}^0$ there are two consecutive values
$\underline{s}, \overline{s} \in (0,1)$ such that
the behavior of the separatrices $\Gamma^\pm(t)$ changes as we go from the parameters
$\lambda(\underline{s})$, $\alpha(\underline{s})$ to the parameters
$\lambda(\overline{s})$, $\alpha(\overline{s})$,
then the segment $[\underline{s} \,,\overline{s}]$ is also
scanned with the $s_{\rm step}^1 = 0.1 s_{\rm step}^0$.
This gradual reduction in the partitioning step allows us to
find the boundary values $\underline{s}$, $\overline{s}$
which specify a bifurcation with a certain accuracy
$\overline{s} - \underline{s} > \varepsilon_{\rm threshold}$.
\begin{table}[h!]
  % \centering
  \tbl{Values of the parameters of the numerical procedure for scanning the region
  $\mathcal{B}_{\delta,\beta}$.}
  {\begin{tabular}{
    |>{\centering}m{1cm}<{\centering}|
    >{\centering}m{1cm}<{\centering}|
    >{\centering}m{1.5cm}<{\centering}|
    >{\centering}m{1cm}<{\centering}|
    >{\centering}m{1cm}<{\centering}|
    >{\centering}m{1cm}<{\centering}|
    >{\centering}m{1cm}<{\centering}|
    >{\centering}m{1.5cm}<{\centering}|
    >{\centering}m{1.5cm}<{\centering}|
    @{}m{0pt}@{}}
    \hline
    $\delta_{\rm grid}$ & $s_{\rm step}^0$ & $\varepsilon_{\rm threshold}$ & $T_{\rm trans}$ & $T_{\rm lim}$ &
    $R_{\rm inf}$ & $\varepsilon_{\rm eq}$ & {\rm RelTol} & {\rm AbsTol} &\\[8pt]
    \hline
    $10^{-2}$ & $10^{-3}$ & $10^{-12}$ & $4 \cdot 10^{3}$ & $10^{3}$ & $100$ & $10^{-1}$ &
    $10^{-15}$ & $10^{-15}$ &\\[8pt]
    \hline
    \end{tabular}}
  \label{table:fitts:sim:regimeTime}
\end{table}

\begin{figure}[h!]
    \centering
    \includegraphics[width=0.5\textwidth]{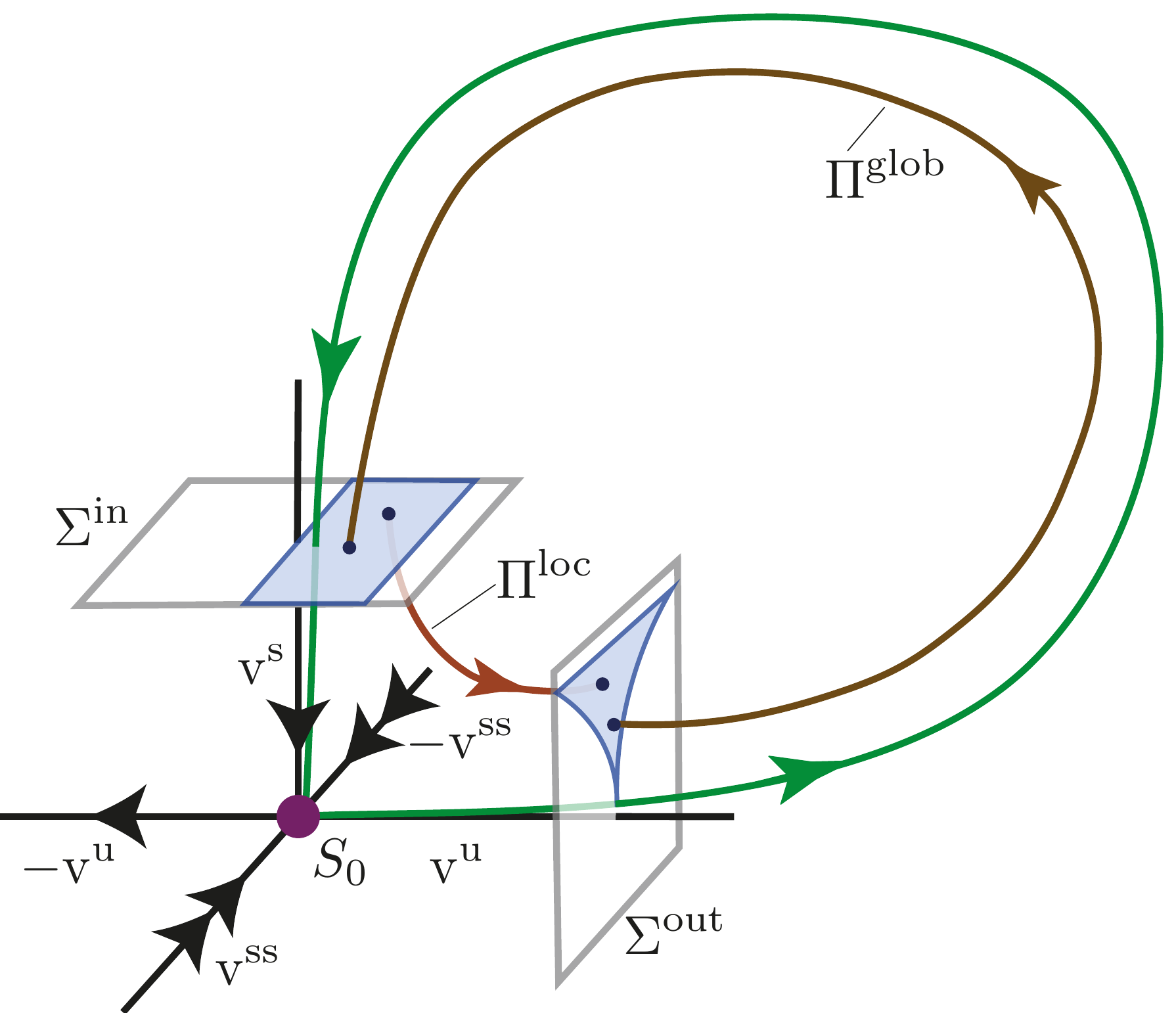}
    \caption{Poincar\'{e} sections $\Sigma^{\rm in}$ and $\Sigma^{\rm out}$
    in the neighborhood of the saddle $S_0 = (0,0,0)$ of the system \eqref{sys:lorenz_like}.}
  \label{fig:poincare_sec}
\end{figure}

After the described scanning of the region $\mathcal{B}_{\delta,\beta}$
for each grid point $(\delta, \beta) \in B_{\rm grid}$
the values $\underline{s}, \overline{s} \in (0,1)$ are found numerically,
such that the change of the parameter $s$ on the interval
$[\underline{s} \,,\overline{s}] \subset (0,1)$
specifies a homoclinic bifurcation.
Further, the type of homoclinic bifurcation was refined by numerical analysis of the behavior
of the Poincar\'{e} map on the corresponding sections $\Sigma^{\rm in}$, $\Sigma^{\rm out}$,
chosen in the neighborhood of the saddle $S_0$ (Fig.~\ref{fig:poincare_sec}).
The section $\Sigma^{\rm in}$ is chosen perpendicular to the vector ${\rm v^{s}}$
at a distance of $\varepsilon^{\rm in}$ from $S_0$,
the section $\Sigma^{\rm out}$ -- is perpendicular to the vector ${\rm v^{u}}$
and is located at a distance of $\varepsilon^{\rm out}$ from $S_0$.
On the section $\Sigma^{\rm in}$ a rectangular grid of points $\Sigma^{\rm in}_{\rm grid}$
with sides collinear to the vectors ${\rm v^{u}}$ and ${\rm v^{ss}}$ is chosen.
We match the color according to the color scale (from blue to red) to each row of grid points,
starting with the row that lies at the intersection of
$\Sigma^{\rm in}$ and the plane $\{{\rm v^{s}},{\rm v^{ss}}\}$, and paint the grid in this way
(Fig.~\ref{fig:poincare_sec_grid}).
Next, the evolution of the Poincare map
$\Pi = \Pi^{\rm glob} \circ \Pi^{\rm loc} : \Sigma^{\rm in} \to \Sigma^{\rm in}$
of the given grid of points $\Sigma^{\rm in}_{\rm grid}$ is numerically studied.
In our experiment, the maps $\Pi^{\rm loc} : \Sigma^{\rm in} \to \Sigma^{\rm out}$
and $\Pi^{\rm glob} : \Sigma^{\rm out} \to \Sigma^{\rm in}$
are simulated in {\rm MATLAB} using numerical procedure \texttt{ode45} and
built-in event handler {\rm ODE Event Location}
to determine the moment of hitting on the corresponding section.
% Also, to accelerate the described calculations, the grid points will
% be mapped to the sections in parallel using the {\rm Parallel Computing Toolbox}.

As a result of numerical experiments it was found that
for a sufficiently small rectangle
after the first Poincar\'{e} map its image falls inside its domain.
Therefore, from the rectangular grid of points it is possible to
cut out the middle part and to consider the half frame in the experiment.
The size of the cutted-out part is chosen in such a way that
the intersection point of the separatrix $\Gamma^+(t)$ released from the saddle $S_0$
with the section $\Sigma^{\rm in}$ belongs to it along with its small neighborhood.
This approach allows us to avoid the simulation of
the separatrices close to the homoclinic loop
that requires calculations over large time intervals,
since during the refinement of the bifurcation parameter,
the separatrix becomes close to the homoclinic trajectory.
\begin{figure}[h!]
    \centering
    \subfloat[$\Pi^{\rm loc} : \Sigma^{\rm in} \to \Sigma^{\rm out}$
    maps rectangular grid of points $\Sigma^{\rm in}_{\rm grid}$
    to a ''half bowtie'' shape grid $\Pi^{\rm loc}\big(\Sigma^{\rm in}_{\rm grid}\big)$.
    ]{
        \includegraphics[width=0.45\textwidth]{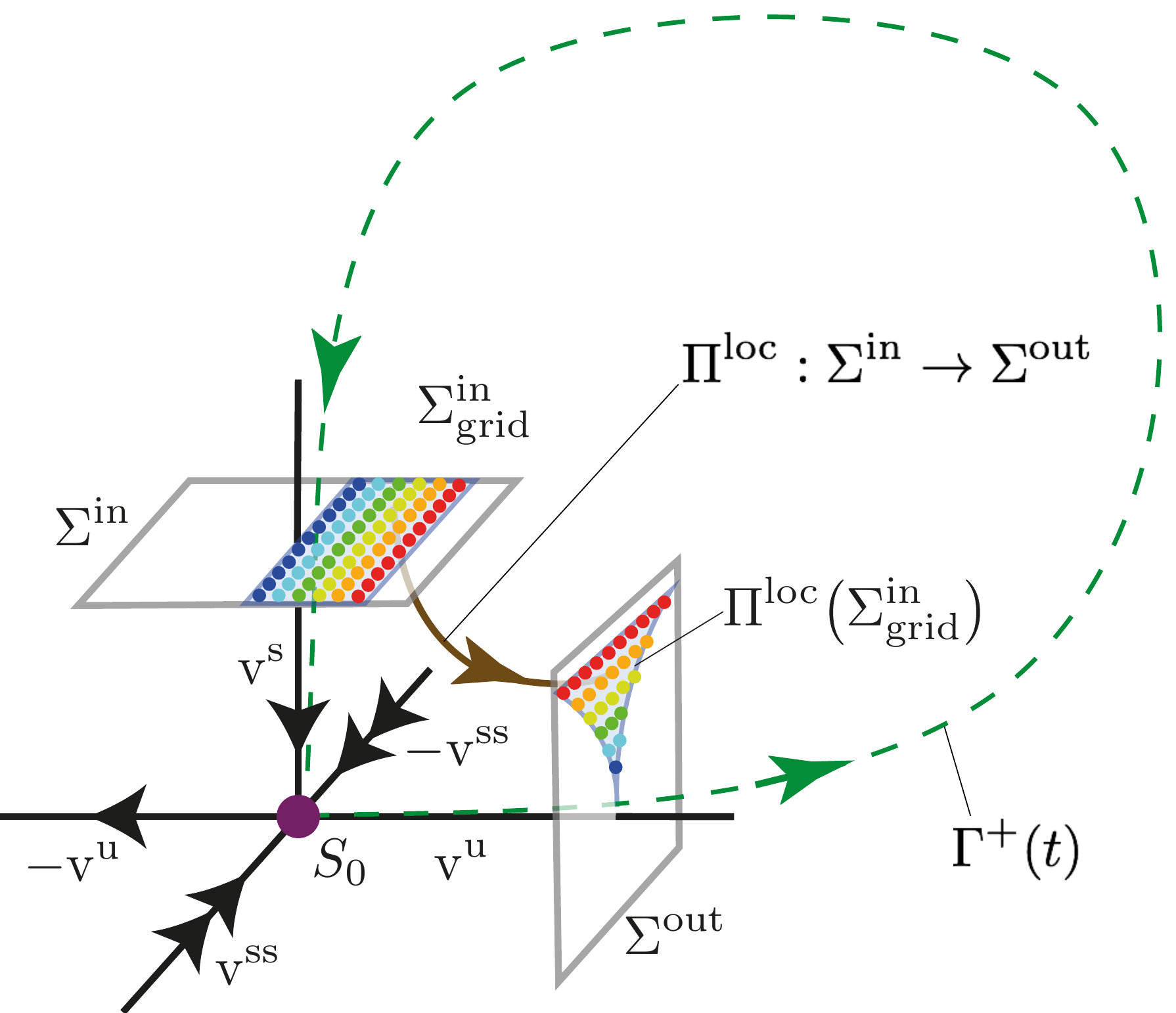}
    }\quad
    \subfloat[$\Pi^{\rm glob} : \Sigma^{\rm out} \to \Sigma^{\rm in}$
    shrinks $\Pi^{\rm loc}\big(\Sigma^{\rm in}_{\rm grid}\big)$
    into a ''stick'' shape grid $\Pi^{\rm glob}\big(\Sigma^{\rm in}_{\rm grid}\big)$.
    ]{
        \includegraphics[width=0.45\textwidth]{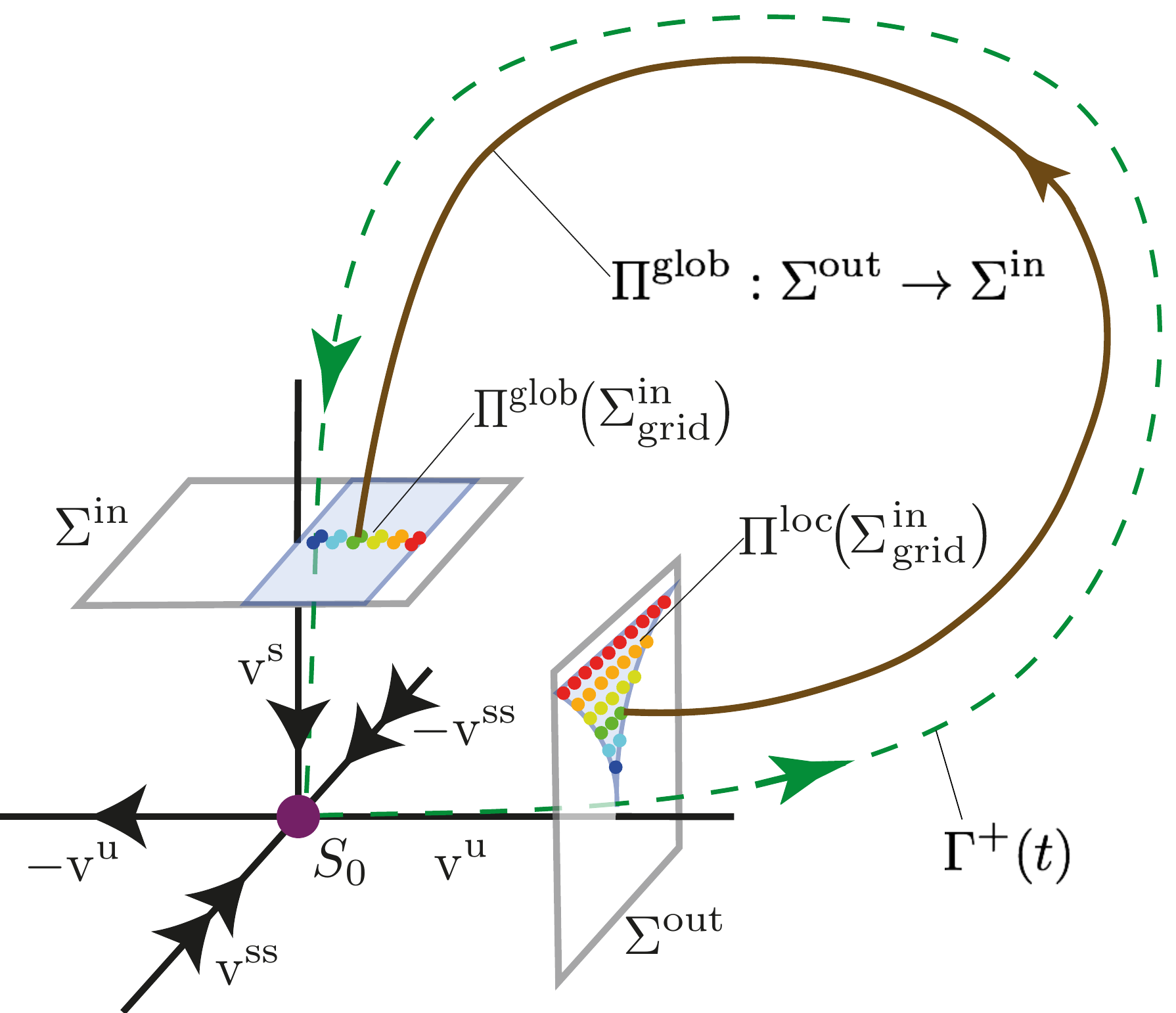}
    }
    \caption{Rectangular grid of points $\Sigma^{\rm in}_{\rm grid}$
    and its image $\Pi^{\rm glob}\big(\Sigma^{\rm in}_{\rm grid}\big)$
    on the Poincare section $\Sigma^{\rm in}$.}
  \label{fig:poincare_sec_grid}
\end{figure}

Numerical studies have shown that in the region covered by the given grid points,
there are $4$ regions with different homoclinic bifurcations (Fig.~\ref{fig:deltaBetaTrap}).
In the green region marked with ($\circ$) before bifurcation separatrices $\Gamma^{\pm}(t)$
were attracted to the opposite equilibria $S_\mp$ and after bifurcation
-- to the nearest ones, i.e. to $S_\pm$.
In this case, during the inverse bifurcation
(i.e. while moving in the direction from $s = 1$ to $s = 0$),
two unstable limit cycles are born from the homoclinic butterfly.
This scenario corresponds to the case of the homoclinic bifurcation in
classical Lorenz system \cite{Lorenz-1963} with parameters
$\sigma = 10$, $b = 8/3$, $r \in [13.9265574075, 13.9265574076]$
(see e.g. \cite{ShilnikovTCh-2001}).
% During the numerical investigation of the behavior of the grid of points $\Sigma^{\rm in}_{\rm grid}$
% on the Poincar\'{e} section $\Sigma^{\rm in}$, we failed to confirm the common statement
% that upon splitting the two symmetric homoclinic loops outward,
% a saddle periodic orbit is born from each loop
% (see, e.g.~\cite[p.~751]{ShilnikovTCh-2001}).
% For the parameters corresponding to the classical Lorenz system
% ($\sigma = 10$, $b = 8/3$, $r \in [13.926557407520431, 13.926557407520436]$ )
% the numerical verification of this statement using the analysis of
% the rectangular grid of points $\Sigma^{\rm in}_{\rm grid}$ on the Poincar\'{e} section
% moving along the separatrix perpendicular to the vector ${\rm v}^s$
% is presented in Fig.~\ref{fig:lorenz_like:lorenz}.
\begin{figure}[h!]
    \centering
    \includegraphics[width=0.5\textwidth]{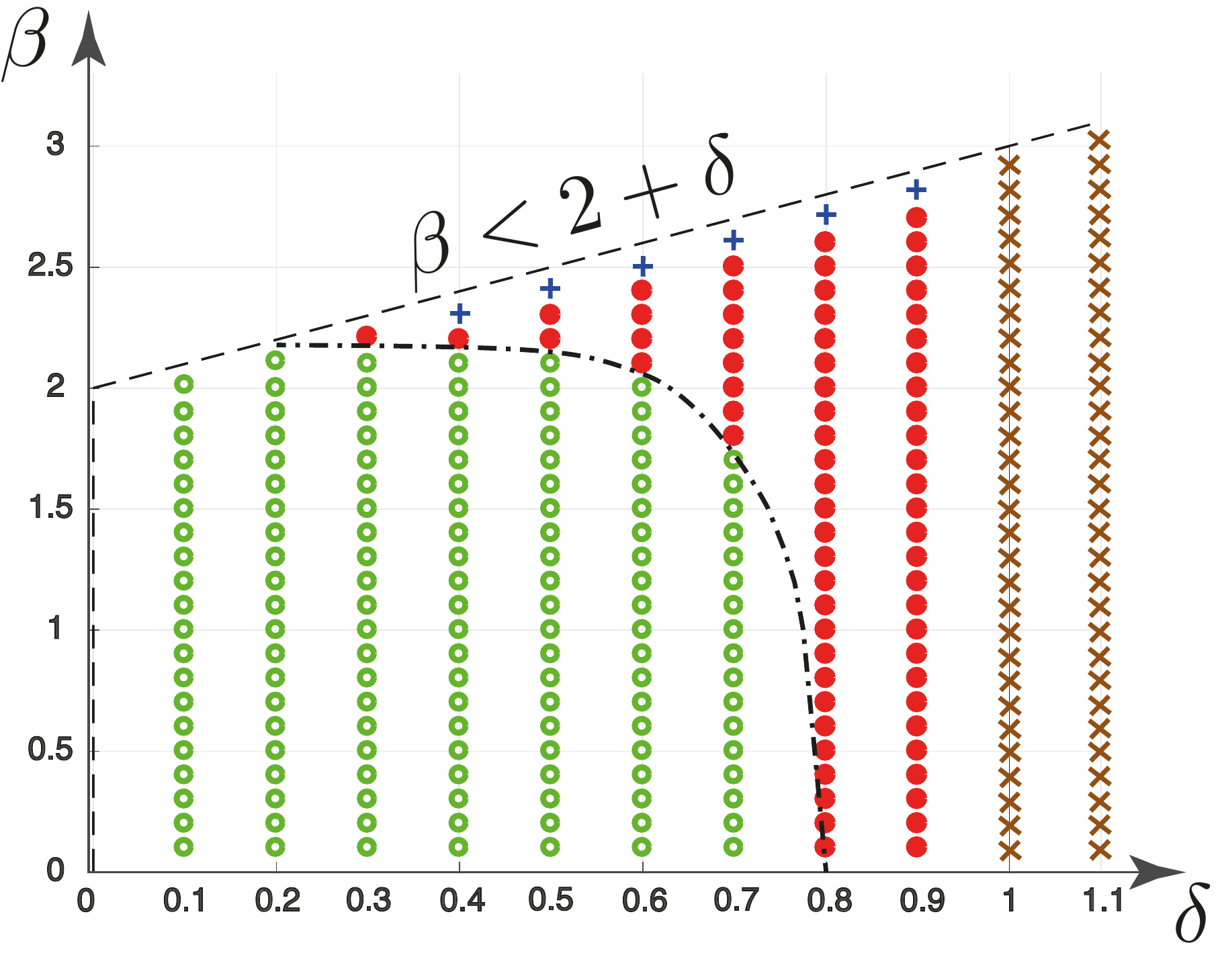}
    \caption{Different types of homoclinic bifurcations in the system \eqref{sys:lorenz_like}.}
  \label{fig:deltaBetaTrap}
\end{figure}

In the orange region marked with ($\times$) %corresponding to $\delta \in [1, \, 1.1]$
during the bifurcation, one large stable ''eight''-type limit cycle splits into
two stable limit cycles around $S_\pm$.
Numerical analysis of the separatrices behavior for all $\beta \in (0, 2+\delta)$
within the chosen partition and the dynamics analysis
of the grid points $\Sigma^{\rm in}_{\rm grid}$
on the Poincar\'{e} section $\Sigma^{\rm in}$ under the successive action of Poincar\'{e} map
$\Pi : \Sigma^{\rm in} \to \Sigma^{\rm in}$ give us a reason to state
that there is no chaotic dynamics in the vicinity of the homoclinic bifurcation
in the case of zero and negative saddle values.

% In the black region marked with $(\ast)$, before bifurcation,
% one large stable ''eight''-type limit cycle coexists with unstable limit cycles
% around the stable equilibria $S_\pm$, contracting into a homoclinic butterfly.
% After the bifurcation, all the cycles disappear and the separatrices $\Gamma^{\pm}$
% are attracted to the equilibria $S_\pm$.
\begin{figure}[h!]
 \centering
 \subfloat[$\text{\underline{s}} = 0.060131460578$
 ]{
    \includegraphics[width=0.5\textwidth]{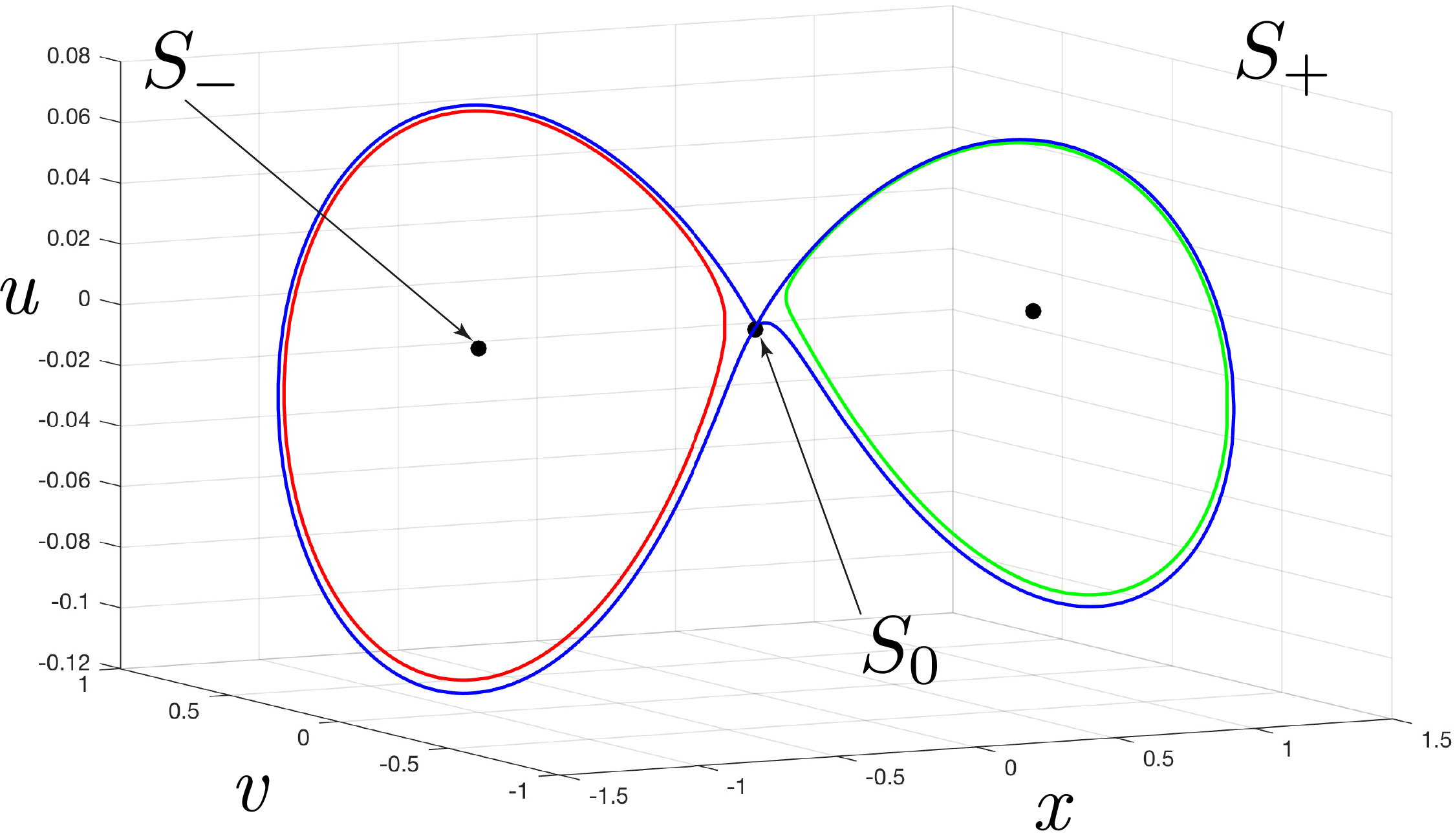}
  }
  \subfloat[$\overline{s} = 0.060131460581$
  ]{
    \includegraphics[width=0.5\textwidth]{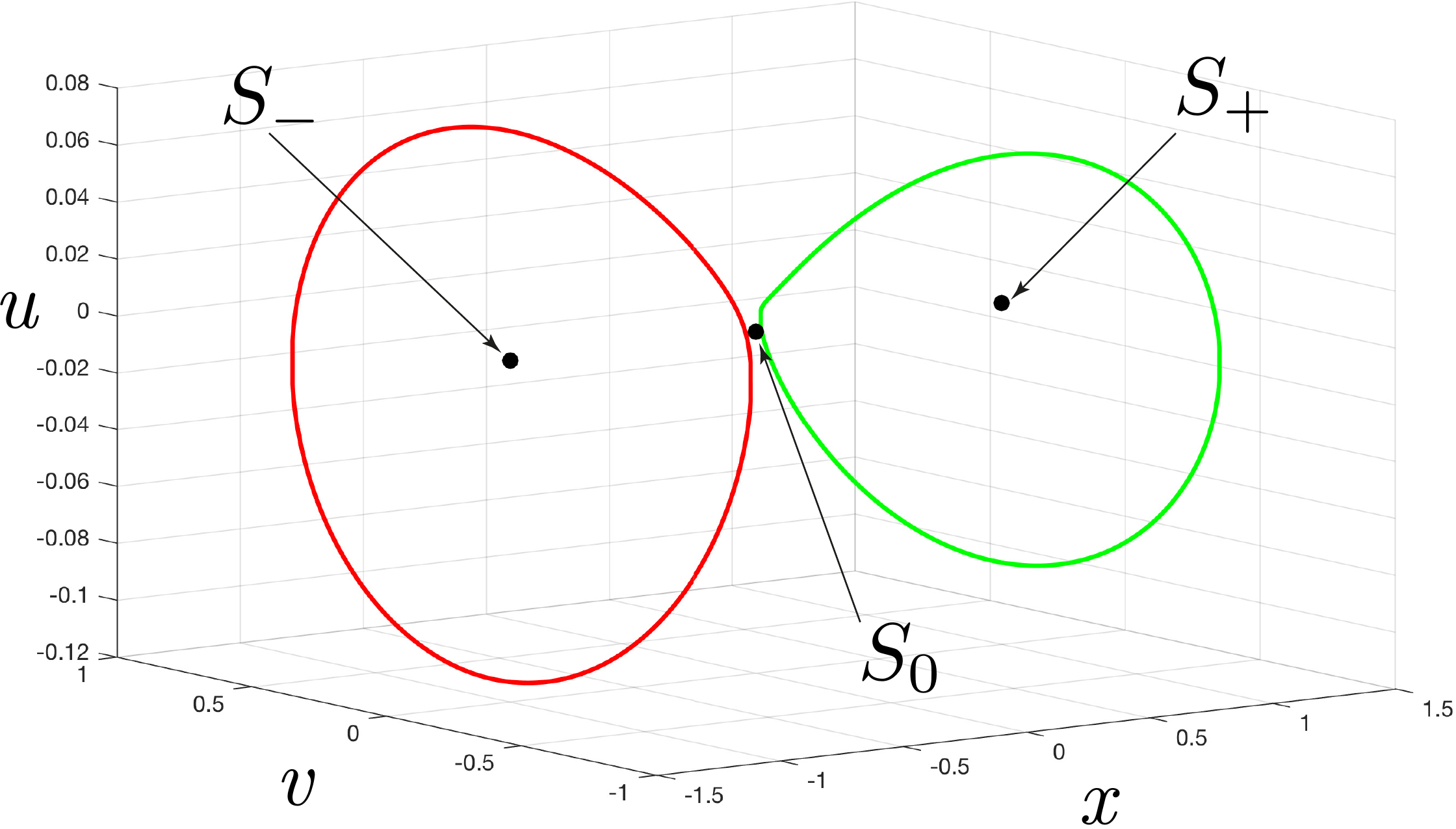}
  }
 \caption{Homoclinic bifurcation for $\delta = 0.9$, $\beta = 0.2$.}
 \label{fig:lorenz_like:lc8t_2lc}
\end{figure}
\begin{figure}[h!]
 \centering
 \subfloat[$\text{\underline{s}}' = 0.7979407438278198$
 ]{
    \includegraphics[width=0.5\textwidth]{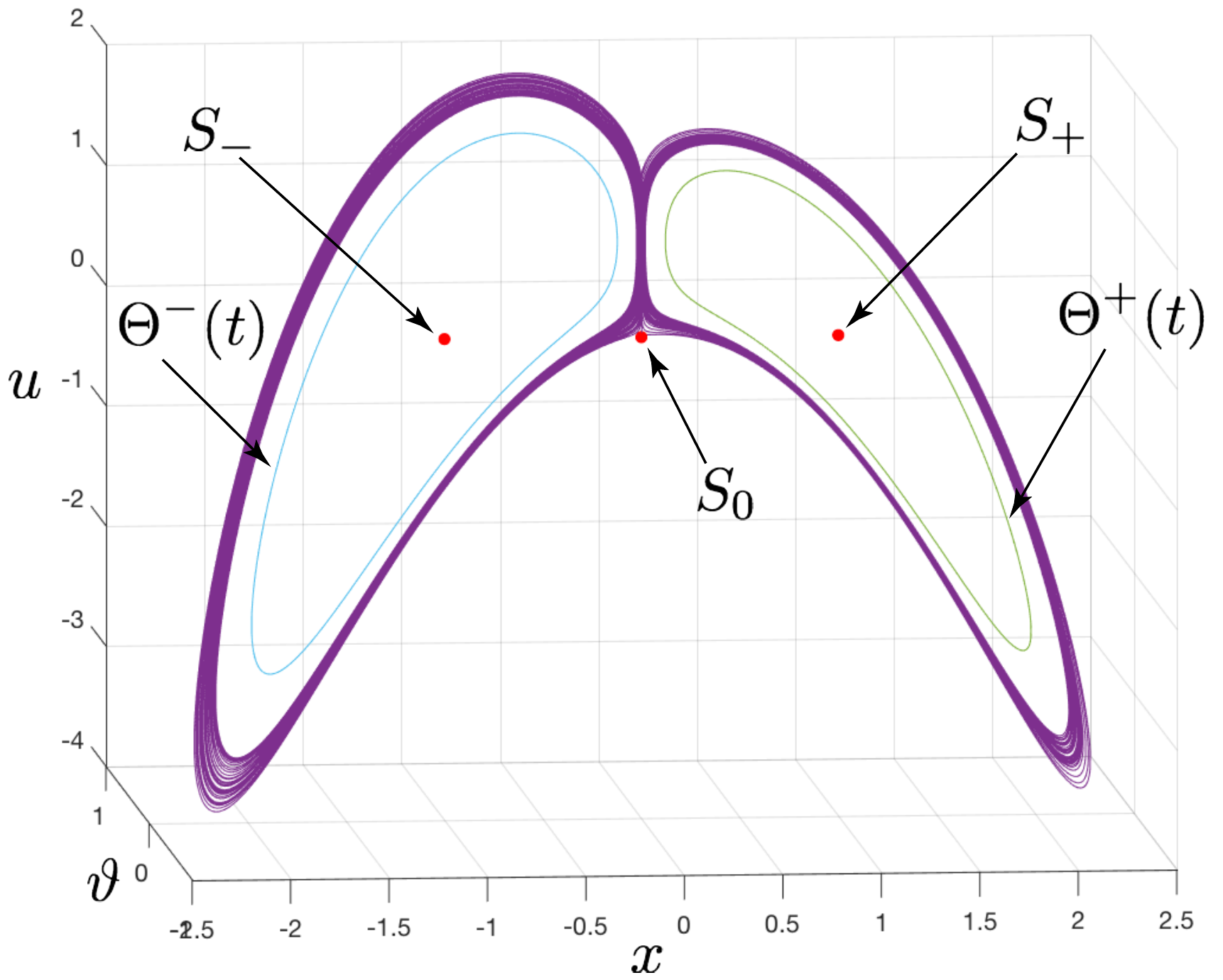}
  }
  \subfloat[$\text{\underline{s}} \in {[0.7979407447814941,0.8059291805341841]}$
  ]{
    \includegraphics[width=0.5\textwidth]{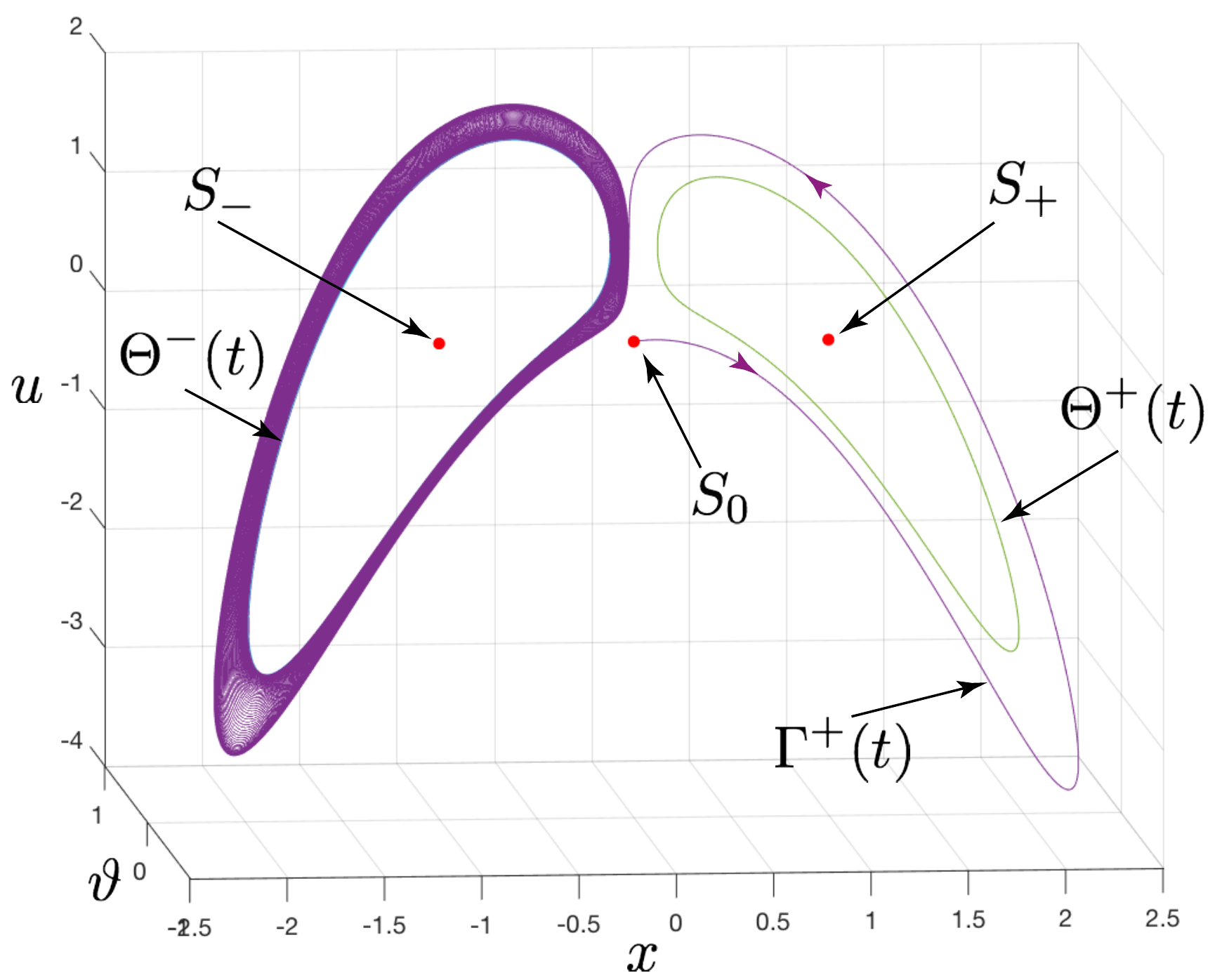}
  }
 \caption{Behavior of separatrix $\Gamma^+(t)$ of saddle $S_0$ and separatrices of
 saddle-foci $S_\pm$ before homoclinic bifurcation for $\delta = 0.5$, $\beta = 2.2$.
 Homoclinic bifurcation occurs on the interval $s \in [\underline{s}, \overline{s}]$,
 where $\overline{s} = 0.8059291805416346$.}
 \label{fig:lorenz_like:ch_2lc}
\end{figure}

Also, two new scenarios of homoclinic bifurcation were found.
In the red area marked with ($\bullet$), depending on values of parameters $\delta$, $\beta$,
two symmetric limit cycles $\Theta^{\pm}$ around $S_\pm$
coexist with either one stable ''eight''-type limit cycle, or a strange attractor
which attract the separatrices $\Gamma^{\pm}(t)$.
Then this attractor (periodic or strange) loses stability and separatrices $\Gamma^{\pm}(t)$
are attracted to the opposite limit cycles $\Theta^{\mp}$.
After the bifurcation the separatrices $\Gamma^{\pm}(t)$
are attracted to the nearest limit cycles $\Theta^{\pm}$.
As in the case of Lorenz system, in this case during the inverse bifurcation,
two unstable limit cycles are born from the homoclinic butterfly,
but here they separetes two stable cycles $\Theta^{\pm}$.
For example, for parameter values $\delta = 0.9$, $\beta = 0.2$,
the dynamics of separatrices in the phase space is shown in
Fig. \ref{fig:lorenz_like:lc8t_2lc}
and the dynamics of the grid of points $\Sigma^{\rm in}_{\rm grid}$
before and after bifurcation is presented in
Fig.~\ref{fig:lorenz_like:lc8t_2lc:PM_before} and
Fig.~\ref{fig:lorenz_like:lc8t_2lc:PM_after}), respectively.
For parameter values $\delta = 0.5$, $\beta = 2.2$ the case of coexistence
of two symmetric limit cycles $\Theta^{\pm}$ around $S_\pm$ with
a strange attractor defined by the separatrix $\Gamma^+(t)$
is presented in Fig.~\ref{fig:lorenz_like:ch_2lc}.
Note that here one could consider two types of vicinities
of the bifurcation point in the parameter space:
$[\underline{s},\overline{s}]$ and $[\underline{s}',\overline{s}]$, where
$[\underline{s},\overline{s}] \subset [\underline{s}',\overline{s}]$.
In the vicinity $[\underline{s},\overline{s}]$ a simple bifurcation
is observed in which, as described just above, there is a change in attracting limit cycles $\Theta^{\pm}$
for the separatrices $\Gamma^{\pm}(t)$ of the saddle $S_0$.
At the same time, on the interval $[\underline{s}',\underline{s})$
the chaotic behavior of the separatrices $\Gamma^{\pm}(t)$ can be observed,
which can make one to think that the
homoclinic bifurcation is embedded in the strange attractor.
\begin{figure}[h!]
 \centering
 \subfloat[{$\overline{s} = 0.7957...$}]{
    \includegraphics[width=0.5\textwidth]{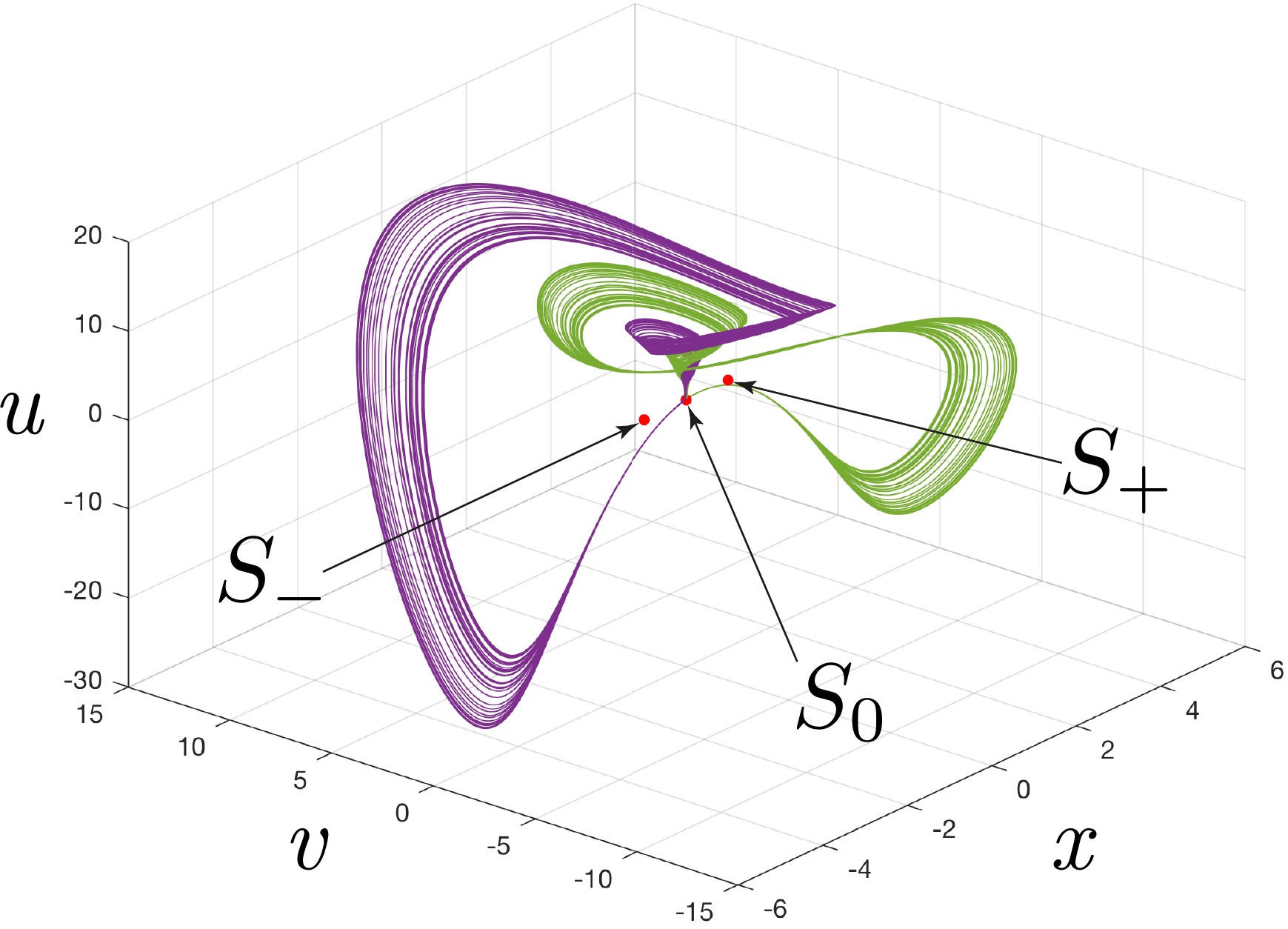}
  }~
  \subfloat[{$\text{\underline{s}} = 0.7955...$}]{
    \includegraphics[width=0.5\textwidth]{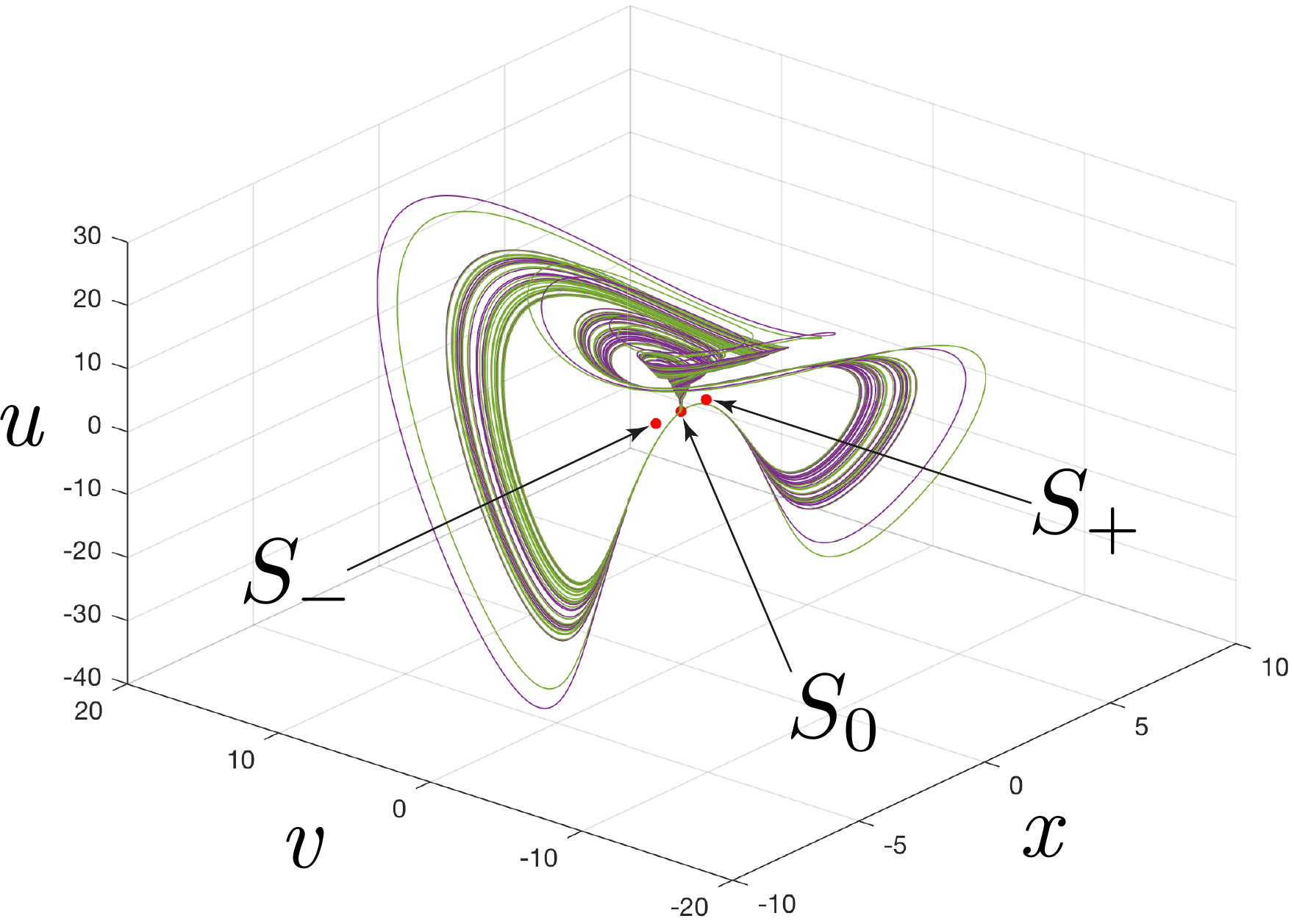}
  }
 \caption{Homoclinic bifurcation of the two merging attractors at $\delta = 0.9$, $\beta = 2.899$.}
 \label{fig:lorenz_like:2chaos_merge}
\end{figure}

In the blue region marked with ($\ast$)
when an unstable homoclinic trajectory occurs,
one strange attractor split into two
(or, if we track the change in the parameter $s$ from $1$ to $0$, then
we can say that two strange attractors merge into one strange attractor).
For example, for parameter values $\delta = 0.9$, $\beta = 2.899$,
the dynamics of separatrices in the phase space is shown in
Fig. \ref{fig:lorenz_like:2chaos_merge} and
the dynamics of the grid of points $\Sigma^{\rm in}_{\rm grid}$
before and after bifurcation is presented in
Fig.~\ref{fig:lorenz_like:unstable_homo:PM_before} and
Fig.~\ref{fig:lorenz_like:unstable_homo:PM_after}), respectively.
\begin{figure}[h!]
    \centering
    \includegraphics[width=0.9\textwidth]{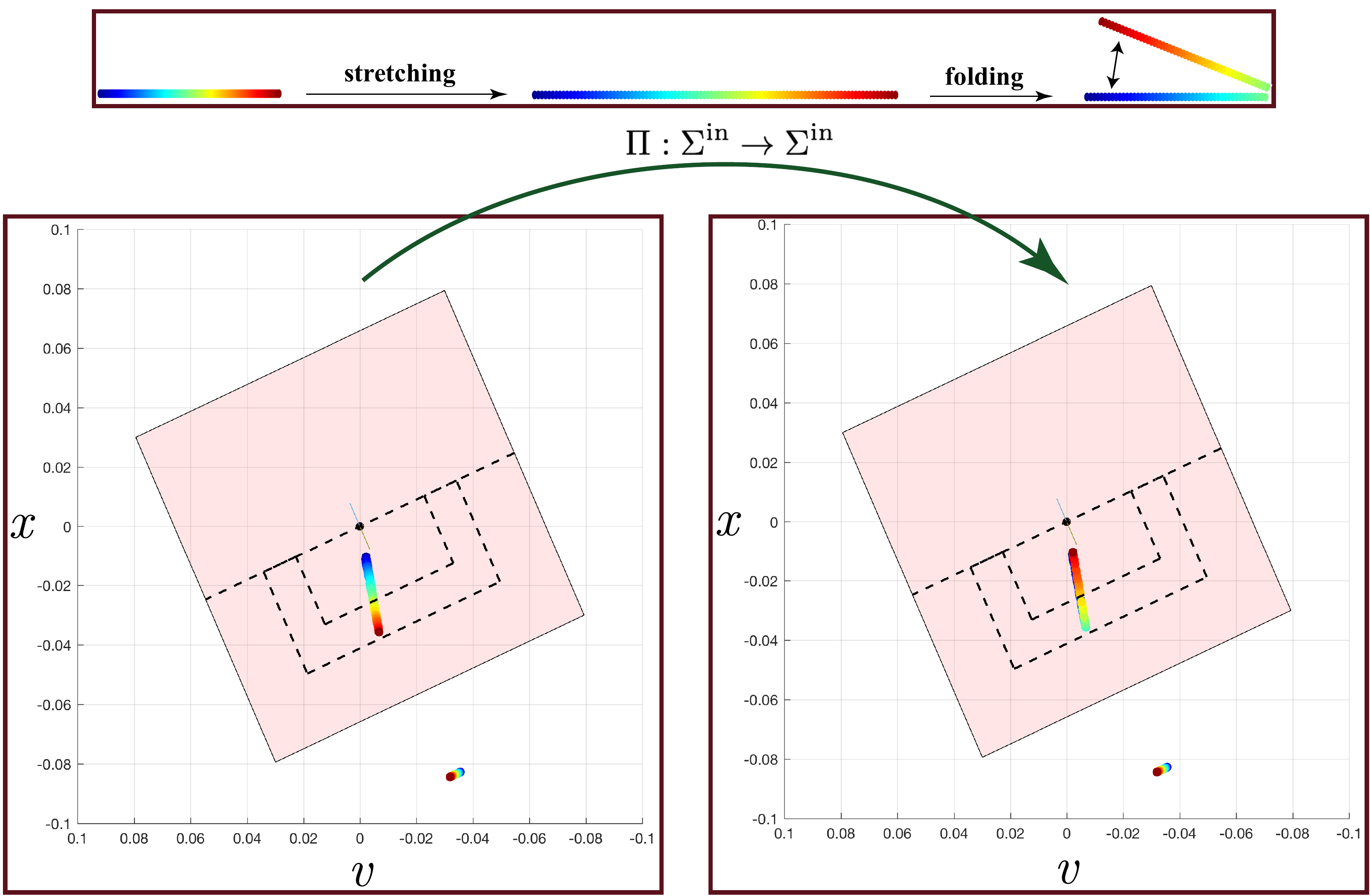}
    \caption{Behavior of the Poincar\'{e} map $\Pi : \Sigma^{\rm in} \to \Sigma^{\rm in}$
    in the case of two splitting attractors of system \eqref{sys:lorenz_like}
    with $\delta = 0.9$, $\beta = 2.899$.}
  \label{fig:testMix}
\end{figure}

For numerical verification of the behavior of the Poincar\'{e} map
$\Pi : \Sigma^{\rm in} \to \Sigma^{\rm in}$
for the case of splitting attractors we perform the following test.
Consider the grid of points $\Sigma^{\rm attr}_{\rm grid}$ corresponding to the
intersection between one of the attractors and the Poincar\'{e} section
$\Sigma^{\rm in}$ and color it according to the scale (from blue to red).
We save the coordinates of grid points assuming
that approximately this grid represents the line segment.
Next, we calculate the image of $\Sigma^{\rm attr}_{\rm grid}$
under the action of the Poincar\'{e} and after that for
each point ${\rm x}_0 \in \Sigma^{\rm attr}_{\rm grid}$ we
compare its coordinate with the coordinate of $\Pi({\rm x}_0)$.
As a result of this experiment, we have obtained that,
under the indicated assumptions, the Poincar\'{e} map
behaves approximately the same way as the know one-dimensional
tent map with parameter $\approx 2$ (Fig.~\ref{fig:testMix}).
Using special methods for finite-time Lyapunov exponents and
finite-time Lyapunov dimension
estimations
(see e.g.~\cite{LeonovKM-2015-EPJST,LeonovKM-2015-CNSNS,LeonovKKK-2016-CNSCS,KuznetsovLMPS-2018}),
we calculate the corresponding
values of the largest finite-time Lyapunov exponent, ${\rm LE}_1(t_{\rm end} {\rm x}_0) = 0.0316 > 0$,
and local finite-time Lyapunov dimension, ${\rm LD}(t_{\rm end}, {\rm x}_0) = 2.0131$
for one of the attractors along the trajectory\footnote{
    Remark that in numerical studying of long-term behavior of trajectories
    of nonlinear systems one usually could face the following problems.
    On the one hand, the result of numerical integration of trajectories via
    approximate methods is strongly influenced by round-off errors
    in the general case accumulate over a large time interval and do not
    allow tracking the ''true'' trajectory without the use of special methods
    and approaches~\cite{Tucker-1999,LiaoW-2014,KehletL-2017}.
    On the other hand, the problem arises of distinguishing between the established behavior
    defined by the {\it sustain} limit sets (periodic orbits, strange attractors)
    from the so-called {\it transient behavior} corresponding to
    a {\it transient set} in phase space, which nevertheless can exist for a long time
    \cite{GrebogiOY-1983,LaiT-2011,ChenKLM-2017-IJBC,KuznetsovLMPS-2018}.
}
with initial data ${\rm x}_0 = (-0.0479075467563750, 8.41428910156156, 13.7220943173008)$ and
time interval $[0,t_{\rm end}]$, $t_{\rm end} = 1000$.
These numerical experiments give us a reason to think that the
considered attractor (and the symmetric one) is strange.

Numerical simulations of separatrices outside the region $\mathcal{B}_{\delta,\beta}$
(i.e. for the case $\beta > 2 + \delta$) show that system \eqref{sys:lorenz_like}
in this region is not dissipative in the sense of Levinson and
separatrices tend to infinity.
Thus, numerically we obtain that outside the region $\mathcal{B}_{\delta,\beta}$
there are no homoclinic bifurcations.
Later on we are going to prove it analytically.

This article is the beginning of a study of this type of homoclinic bifurcation.
Further studies in this direction may require the introduction of
the new mathematical concepts into consideration, and
the development of the new numerical methods with a high performance computing.
Also, the authors plan to take into consideration
recently developed new reliable numerical methods for studying trajectories
of the Lorenz-like systems (see e.g.~\cite{Tucker-1999,LiaoW-2014,LoziP-2015,KehletL-2017})
and the existing approaches for the analysis of homoclinic
bifurcations (see e.g.~\cite{Wiggins-1988,ChampneysKS-1996,Doedel-2007-AUTO,HomburgS-2010}).

\section{Conclusion}

In papers \cite{Leonov-2012-PLA,Leonov-2013-IJBC,LeonovKM-2015-EPJST},
effective analytical and analytical-numerical methods
for studying homoclinic bifurcations and Shilnikov scenarios of system's behavior in its vicinity
were developed.
However, subsequent studies~\cite{LeonovM-2018-arVixV1,LeonovM-2018-PolyahovskieCh,Leonov-2018-UMZh}
have shown the practical difficulties of
the numerical implementation of these methods related to the
calculations with finite accuracy and round-off errors.
In this paper we have tried to overcome these difficulties as much as possible while
remaining within the framework of standard calculations in Matlab.
Thus, in this paper the following results were obtained.
We prove analytically the existence of homoclinic orbit to a saddle zero equilibrium in
the Lorenz-like system \eqref{sys:lorenz_like} %(see Theorem~\ref{thm:leonov})
and perform a numerical scanning of the corresponding parameter region
where the homoclinic bifurcations occur.
As a result, numerical confirmation of possible Shilnikov scenarios and
new scenarios of homoclinic bifurcations
in system \eqref{sys:lorenz_like} were found numerically,
e.g., the unstable homoclinic bifurcation of two merging strange attractors.

Numerical results on the merging attractors bifurcation
emphasize the fact that when studying homoclinic bifurcations it
is not sufficient to investigate the local behavior of the system in a neighborhood of
a saddle equilibrium.
The effects of stretching and compression in the vicinity of the homoclinic trajectory
could be influenced greatly by the global behavior of the system outside the saddle.

\section{Acknowledgment}
This publication was supported
by the grant of the President of Russian Federation
for the Leading Scientific Schools of Russia [NSh-2858.2018.1]
(namely, in Section 2: analytical proof of the existence of homoclinic orbits in system \eqref{sys:lorenz_like},
and in Section 3: initial numerical study of the region $\mathcal{B}_{\delta,\beta}$
of existence of homoclinic orbits), and
by the grant of the Russian Science Foundation [project 14-21-00041]
(namely, in Section 3: clarification of homoclinic bifurcation scenarios using standard Matlab framework,
numerical confirmation of possible Shilnikov scenarios, and
numerical analysis of chaotic attractors arising from the merge bifurcation).

\bibliographystyle{ws-ijbc}

%%%%%%%%%%%%%%%%%%%%%%%%%%%%%%%%%%%%%%%%%%%%%%%%%%%%%%%%%%%%%%%%%%%%%%%%%%%%%%%%%%%%%%%%%%%%%%%%
\begin{figure}[h!]
 \centering
 \subfloat[$i = 0$
 ]{
    \includegraphics[width=0.38\textwidth]{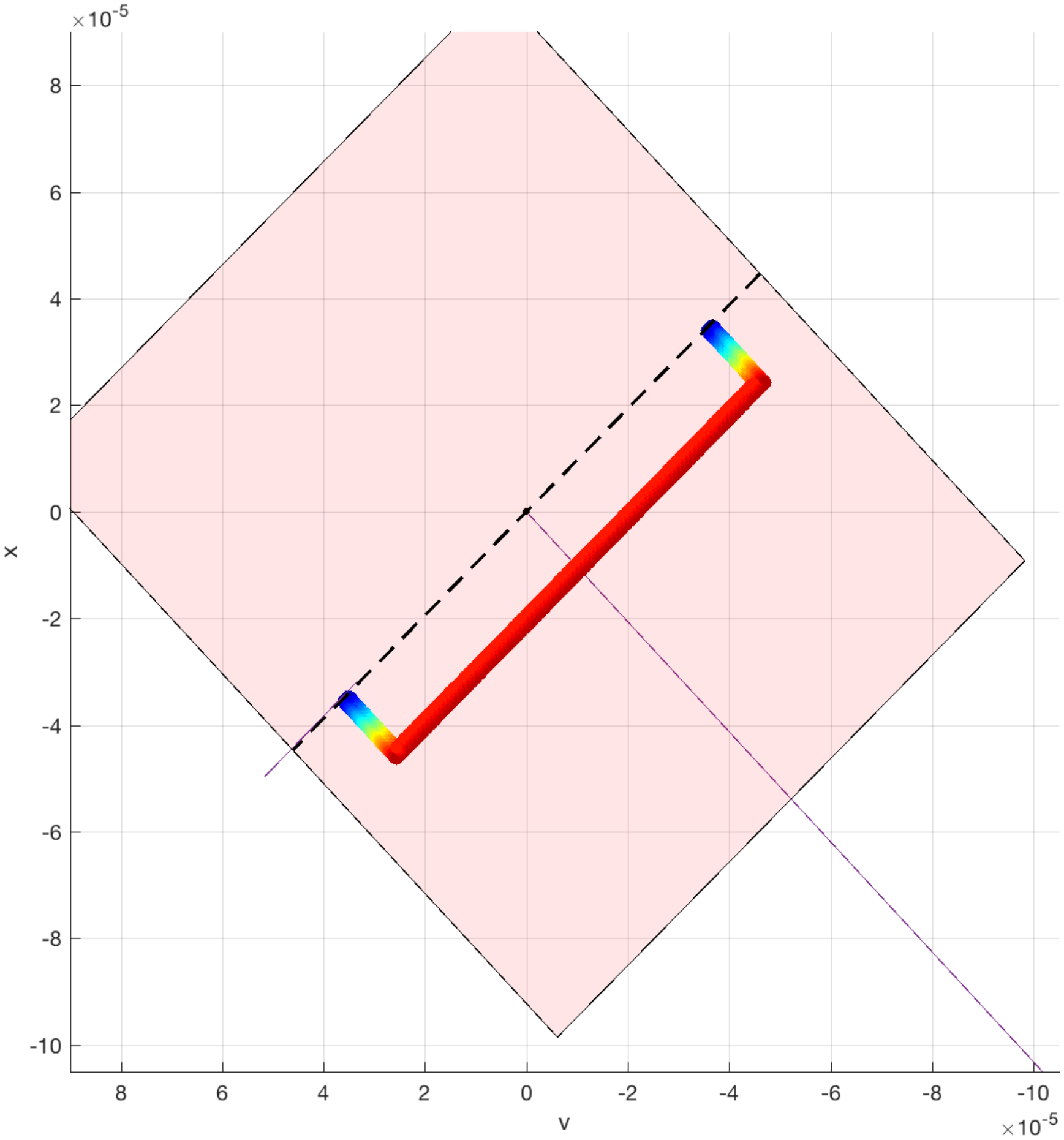}
  }\qquad
  \subfloat[$i = 1$
  ]{
    \includegraphics[width=0.38\textwidth]{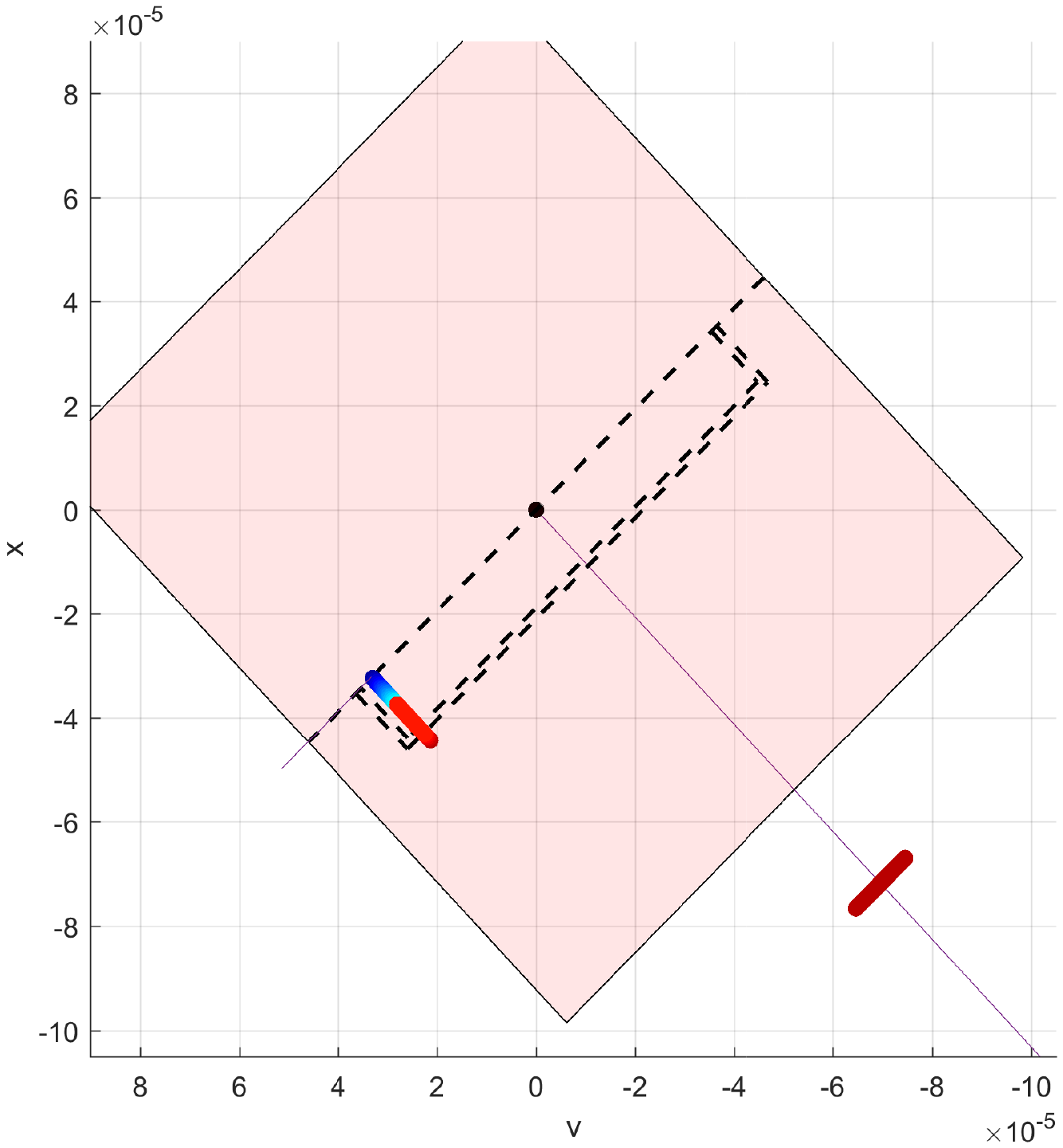}
  }

  \subfloat[$i = 25$
 ]{
    \includegraphics[width=0.38\textwidth]{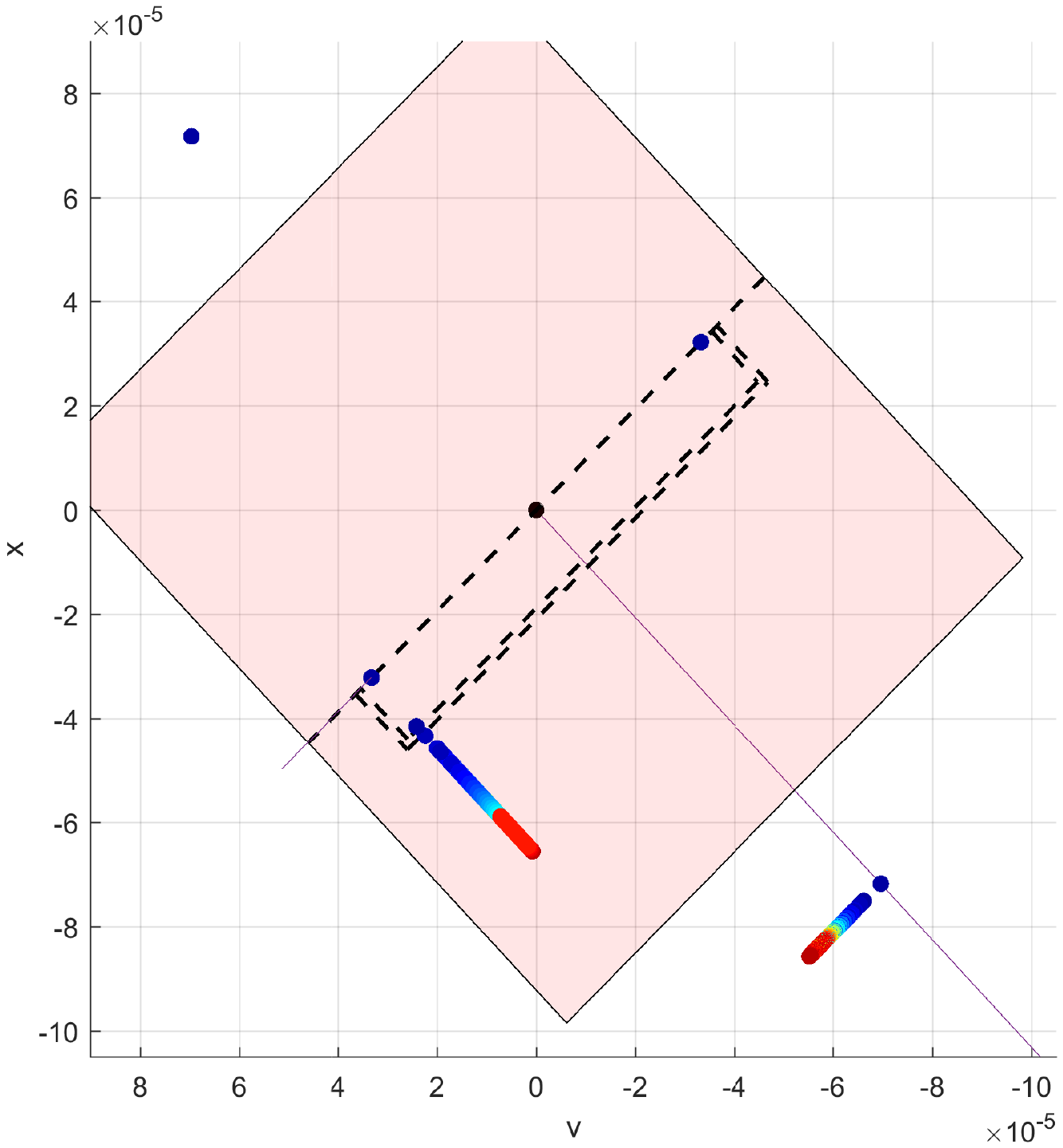}
  }\qquad
  \subfloat[$i = 50$
  ]{
    \includegraphics[width=0.38\textwidth]{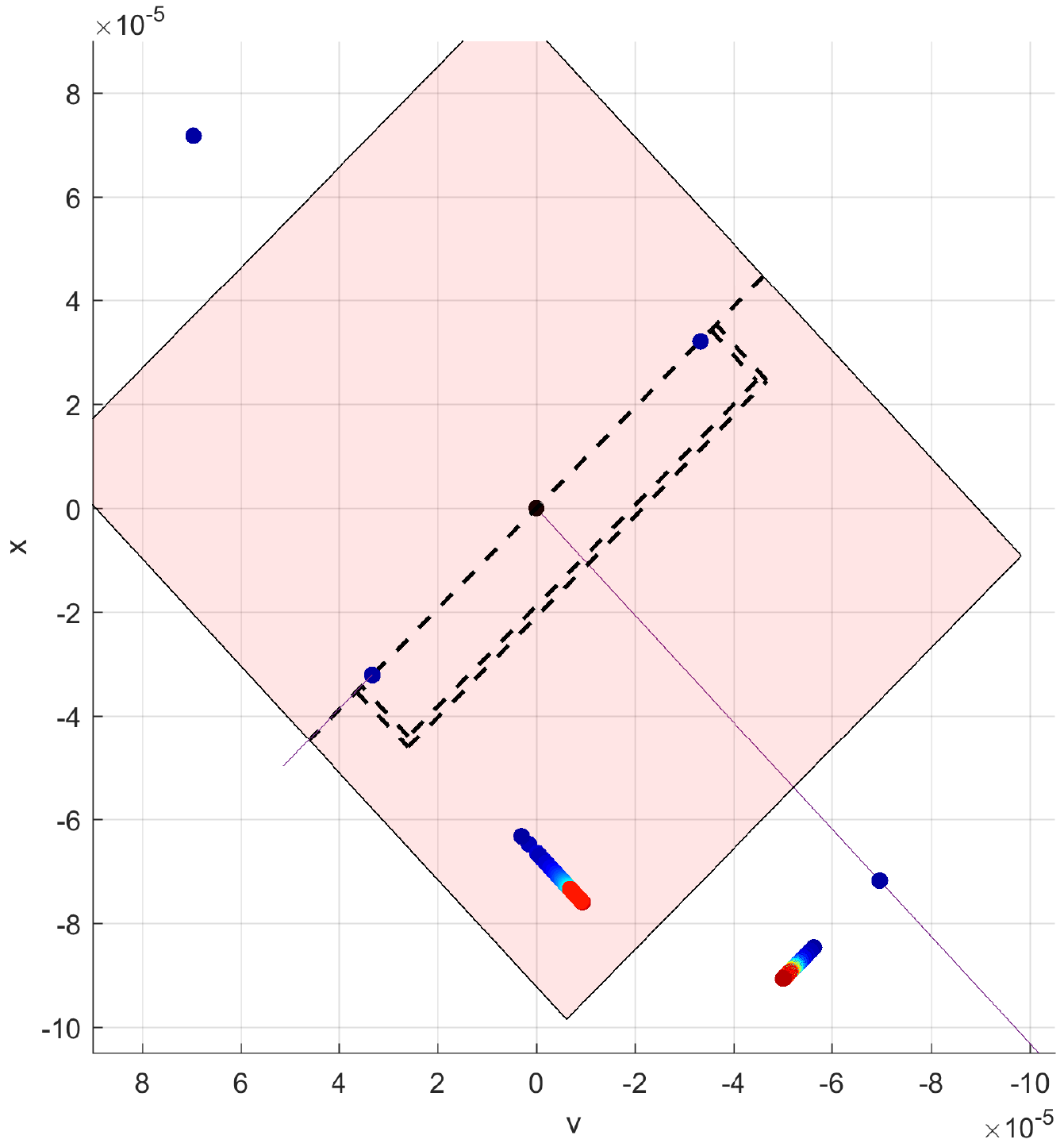}
  }

  \subfloat[$i = 75$
 ]{
    \includegraphics[width=0.38\textwidth]{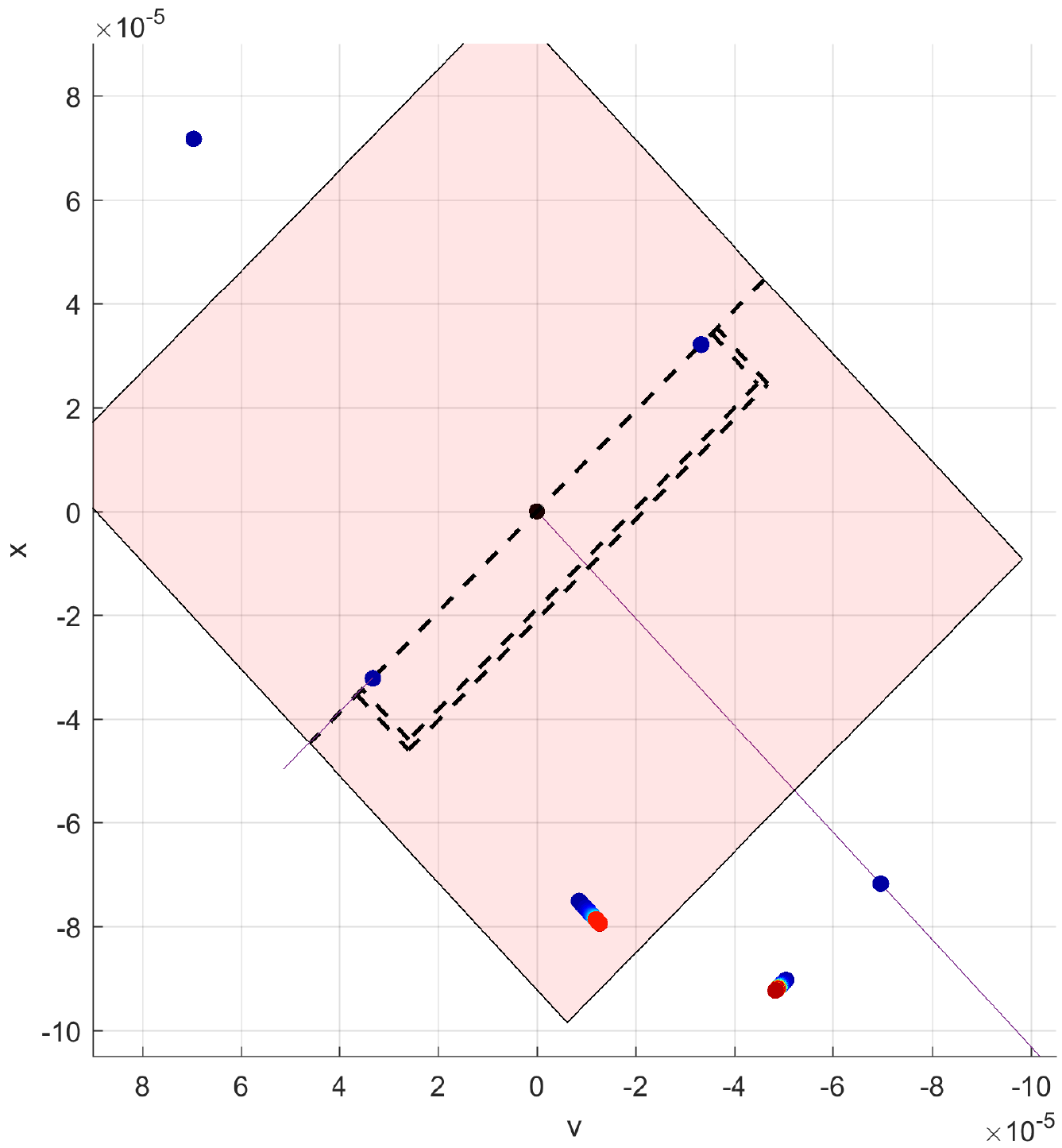}
  }\qquad
  \subfloat[$i = 100$
  ]{
    \includegraphics[width=0.38\textwidth]{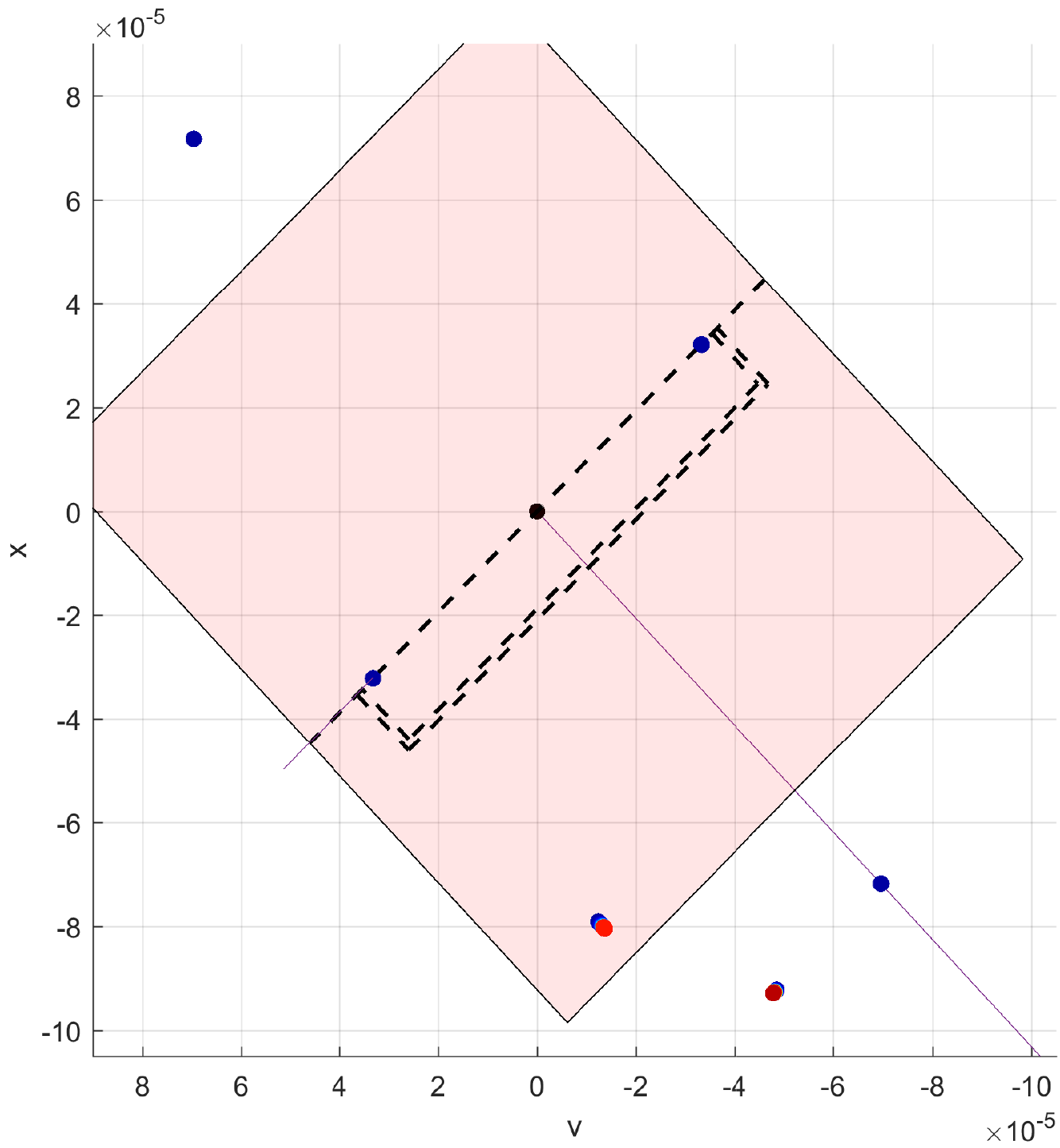}
  }
 \caption{Dynamics of the half-frame of points $\Sigma^{\rm in}_{\rm grid}$
 on the section $\Sigma^{\rm in}$
 under repeated applications of the Poincar\'{e} map
 $\Pi^i : \Sigma^{\rm in} \to \Sigma^{\rm in}$, $i = 1,2, \dots$,
 for $\delta = 0.9$, $\beta = 0.2$, $\text{\underline{s}} = 0.060131460578$
 (before bifurcation).}
 \label{fig:lorenz_like:lc8t_2lc:PM_before}
 \vspace{-1pt}
\end{figure}
%%%%%%%%%%%%%%%%%%%%%%%%%%%%%%%%%%%%%%%%%%%%%%%%%%%%%%%%%%%%%%%%%%%%%%%%%%%%%%%%%%%%%%%%%%%%%%%%
%Dynamics of the half-frame of points on a section under repeated applications of the Poincaré map
\begin{figure}[h!]
 \centering
 \subfloat[$i = 0$
 ]{
    \includegraphics[width=0.38\textwidth]{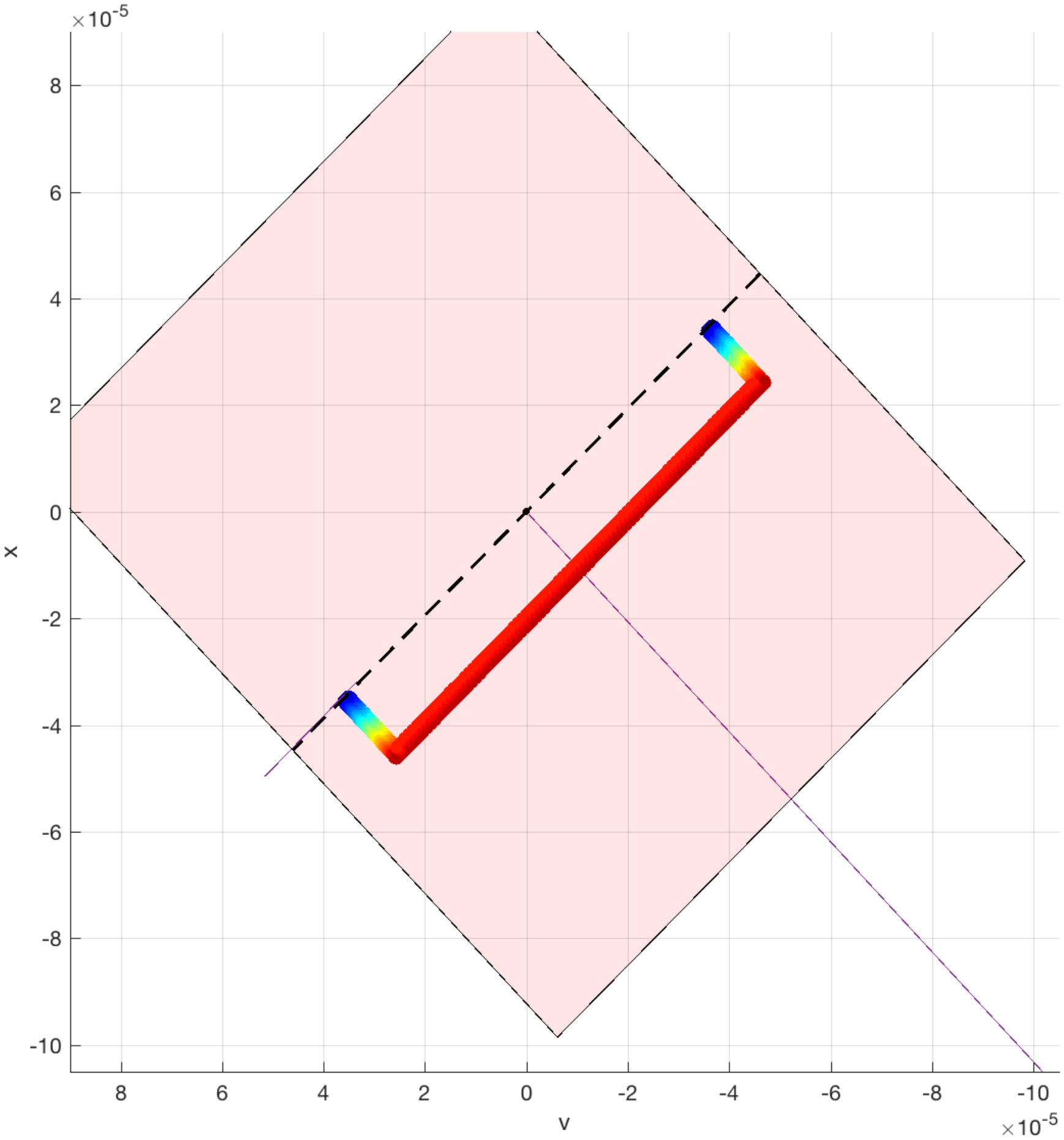}
  }\qquad
  \subfloat[$i = 1$
  ]{
    \includegraphics[width=0.38\textwidth]{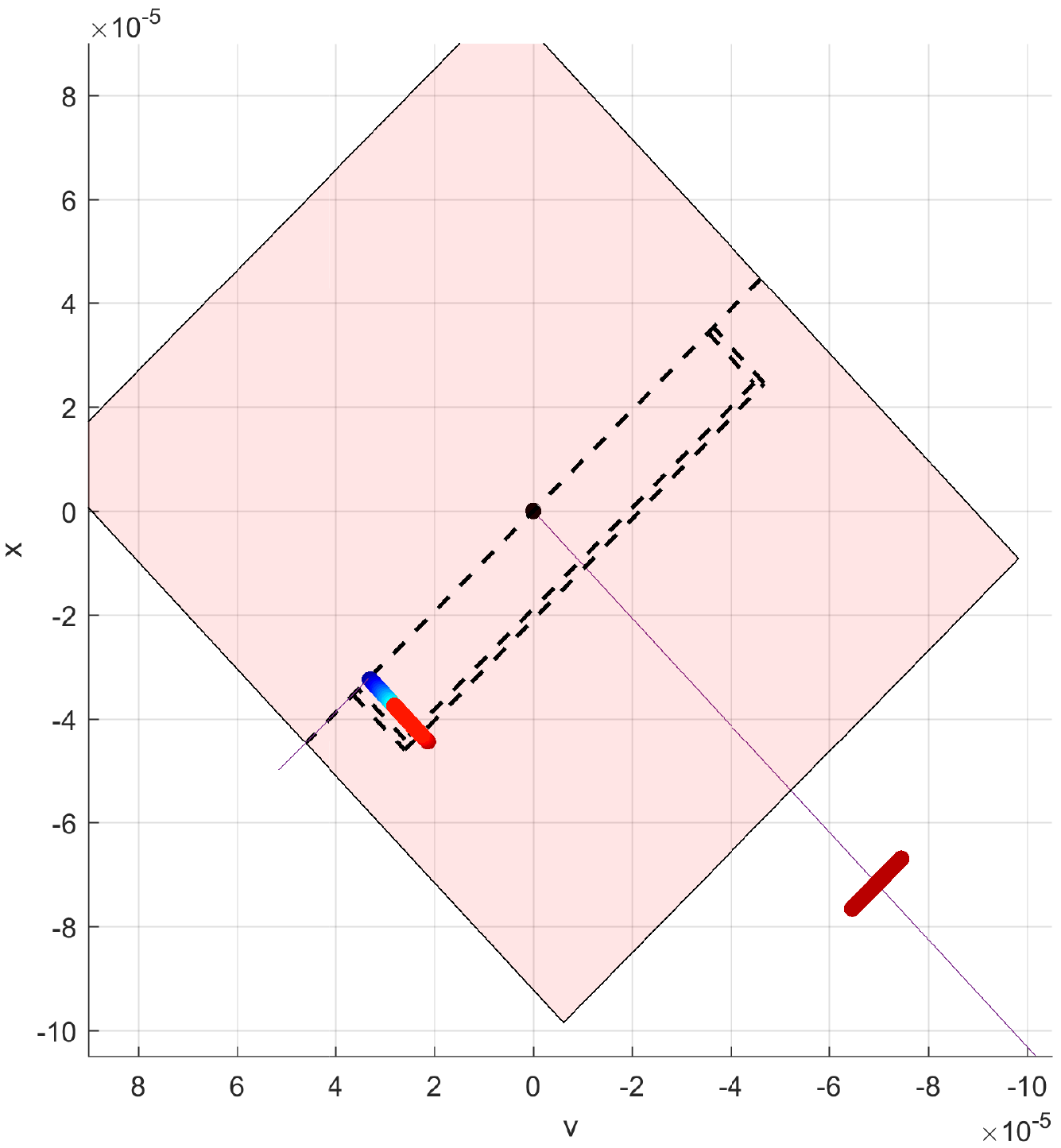}
  }

  \subfloat[$i = 25$
 ]{
    \includegraphics[width=0.38\textwidth]{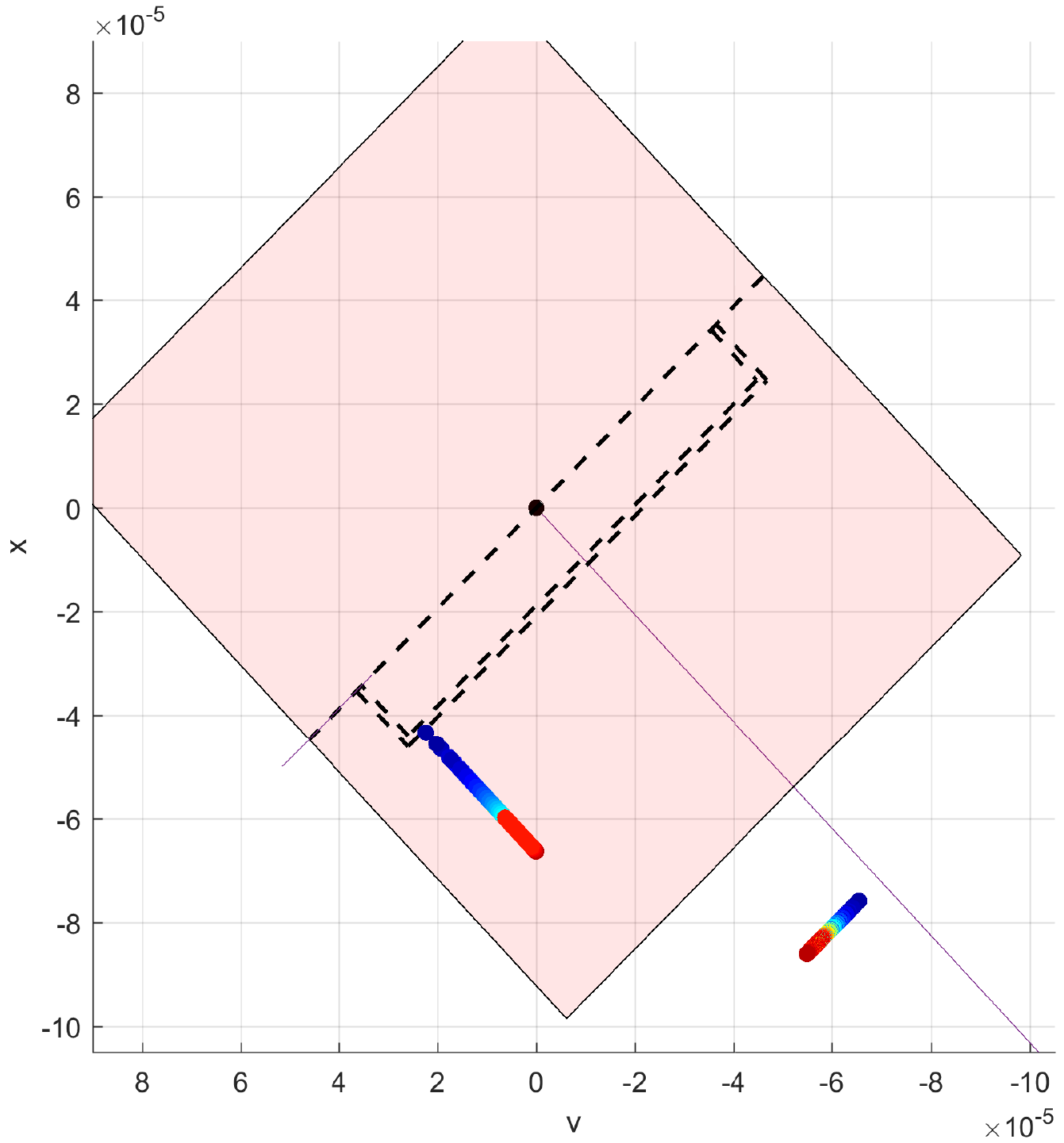}
  }\qquad
  \subfloat[$i = 50$
  ]{
    \includegraphics[width=0.38\textwidth]{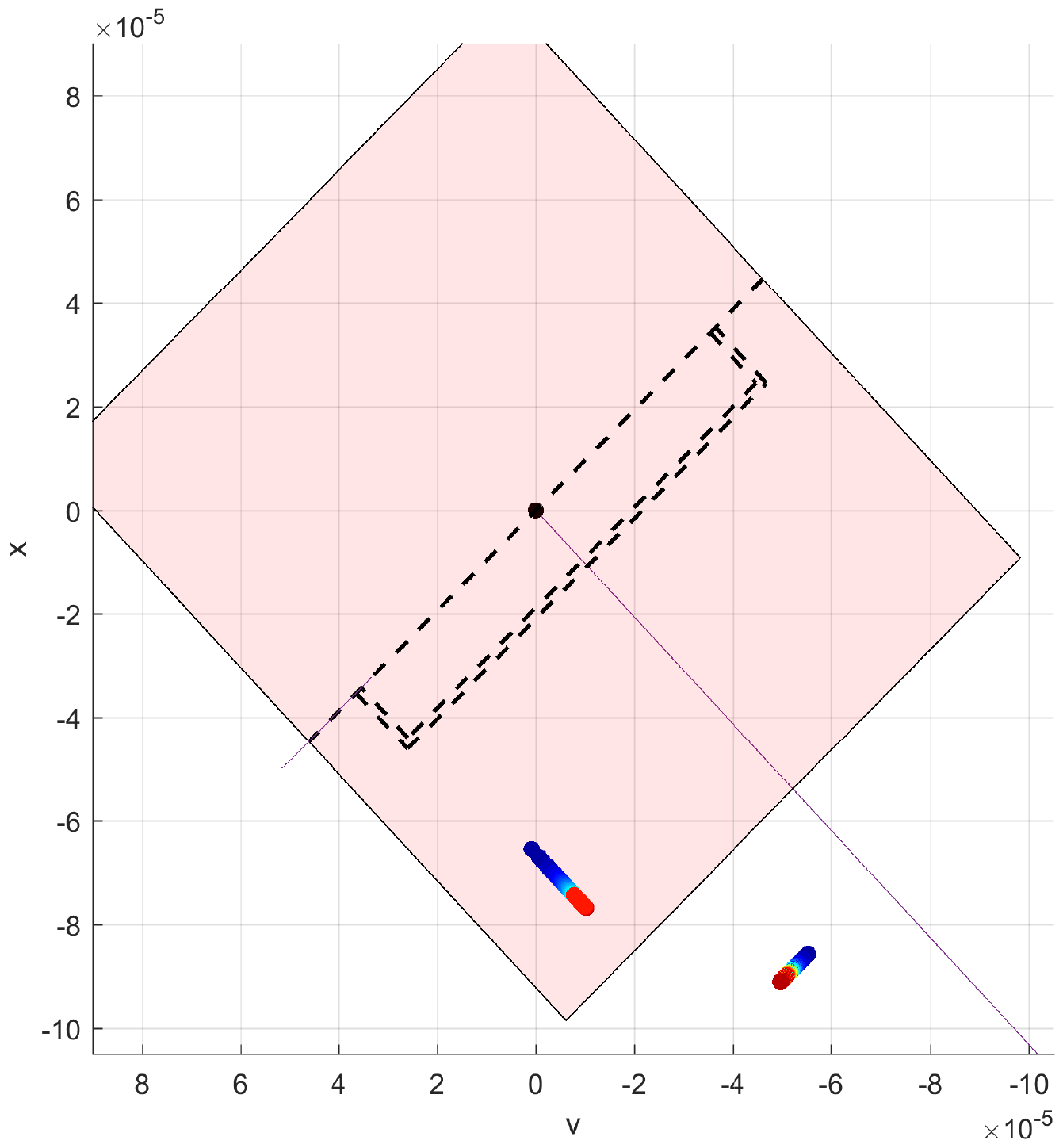}
  }

  \subfloat[$i = 75$
 ]{
    \includegraphics[width=0.38\textwidth]{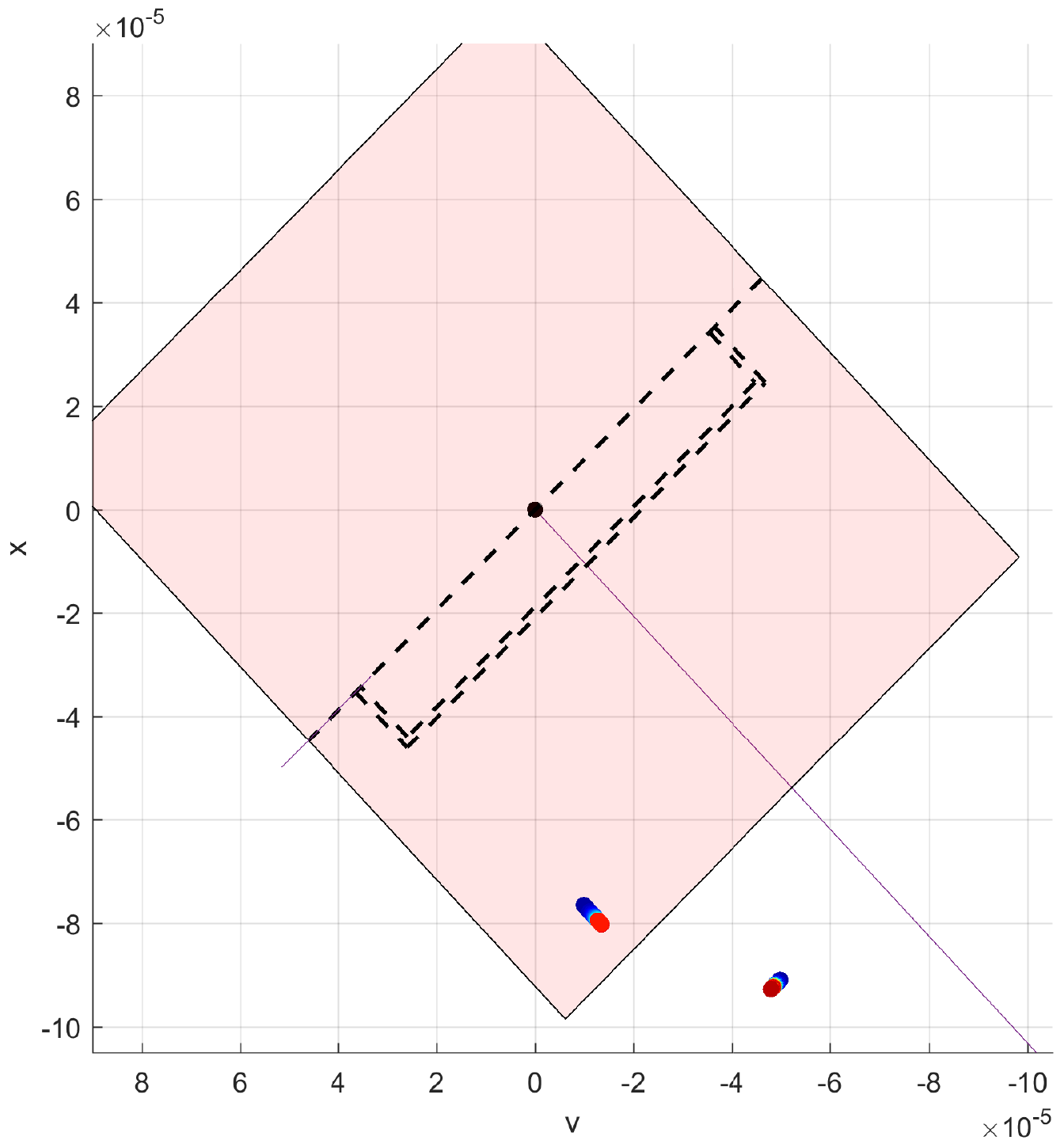}
  }\qquad
  \subfloat[$i = 100$
  ]{
    \includegraphics[width=0.38\textwidth]{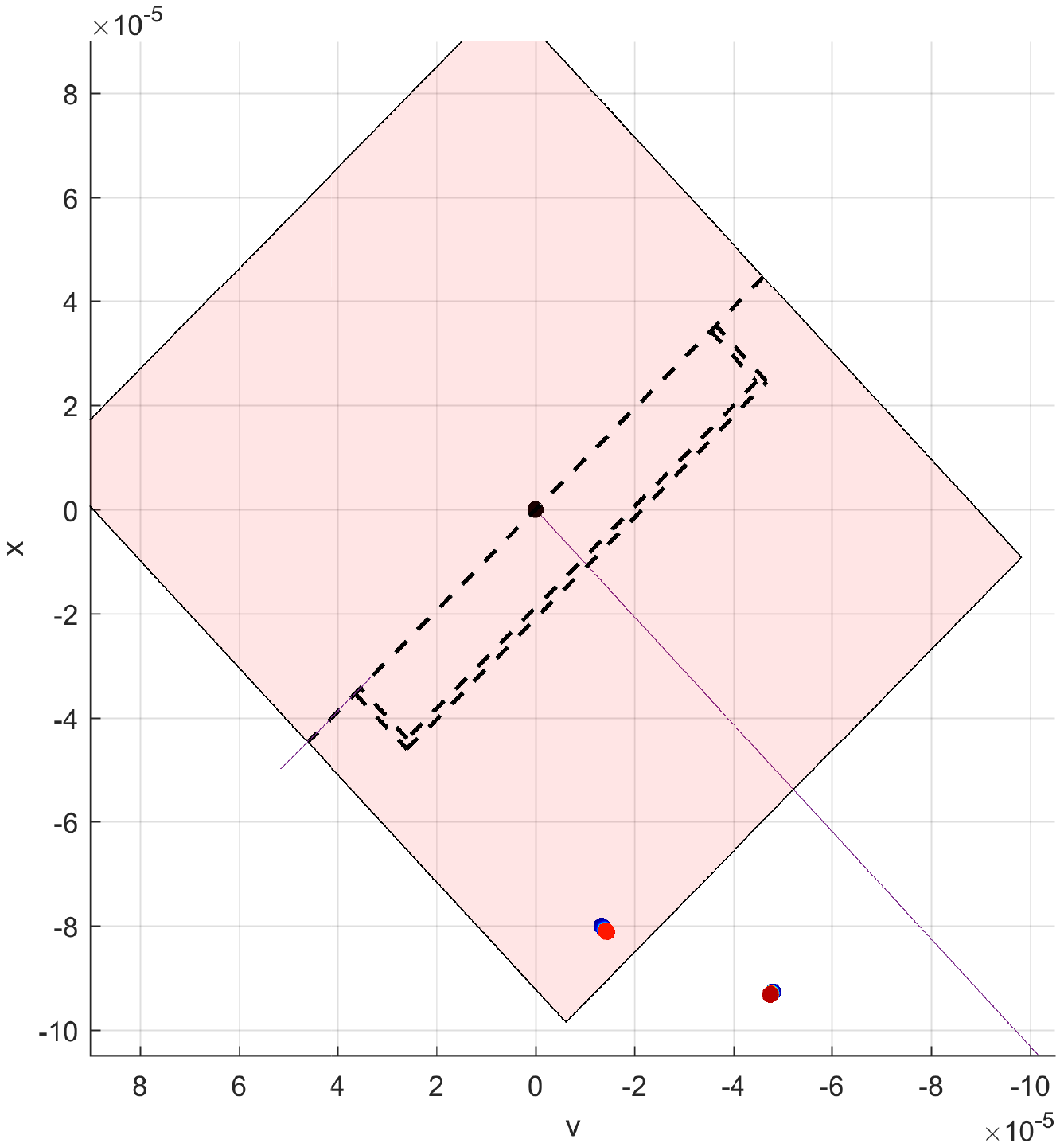}
  }
 \caption{Dynamics of the half-frame of points $\Sigma^{\rm in}_{\rm grid}$
 on the section $\Sigma^{\rm in}$
 under repeated applications of the Poincar\'{e} map
 $\Pi^i : \Sigma^{\rm in} \to \Sigma^{\rm in}$, $i = 1,2, \dots$,
 for $\delta = 0.9$, $\beta = 0.2$, $\overline{s} = 0.060131460581$
 (after bifurcation).}
 \label{fig:lorenz_like:lc8t_2lc:PM_after}
 \vspace{-1pt}
\end{figure}
%%%%%%%%%%%%%%%%%%%%%%%%%%%%%%%%%%%%%%%%%%%%%%%%%%%%%%%%%%%%%%%%%%%%%%%%%%%%%%%%%%%%%%%%%%%%%%%%
\begin{figure}[h!]
 \centering
 \subfloat[$i = 0$
 ]{
    \includegraphics[width=0.4\textwidth]{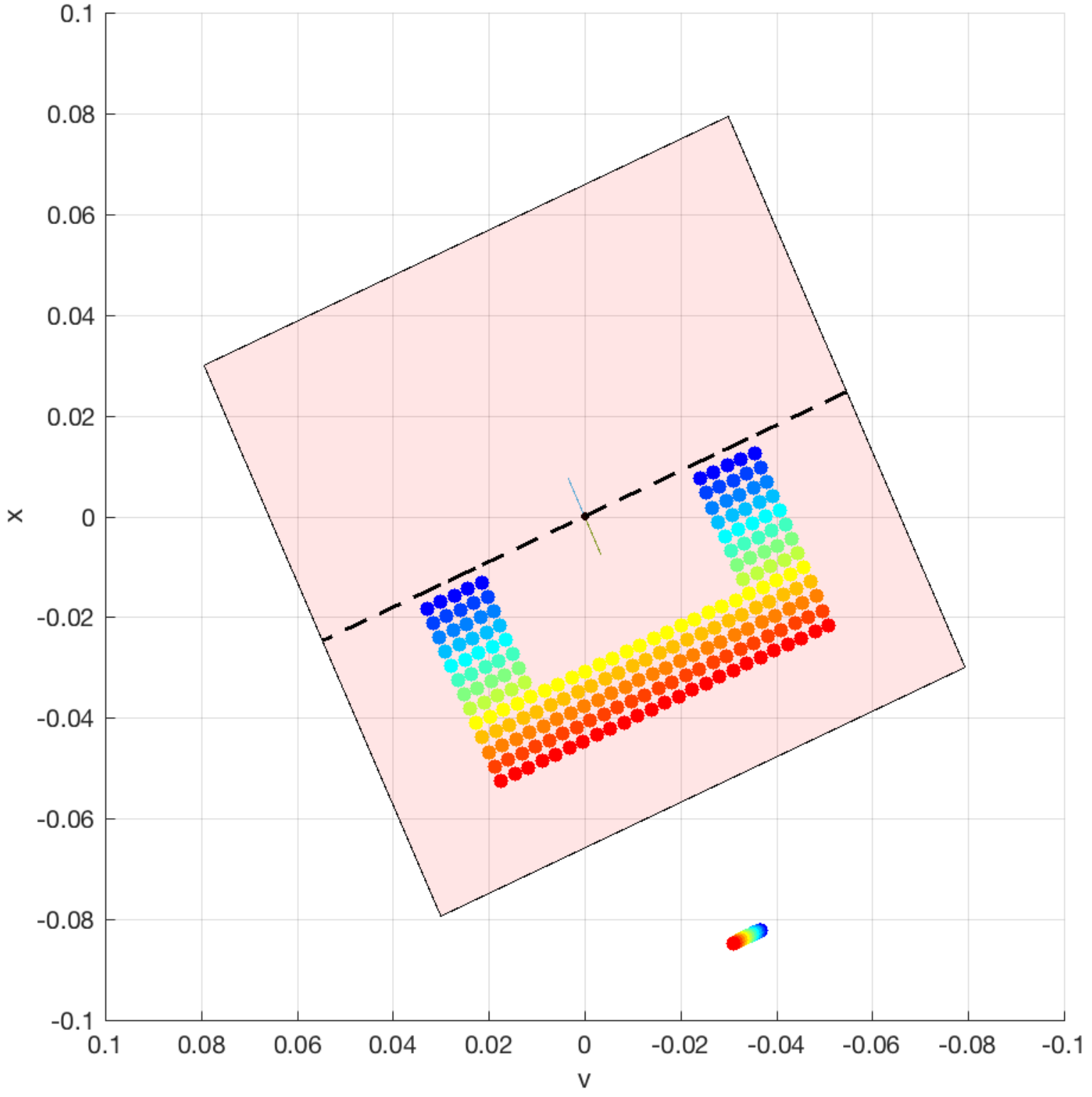}
  }\qquad
  \subfloat[$i = 1$
  ]{
    \includegraphics[width=0.4\textwidth]{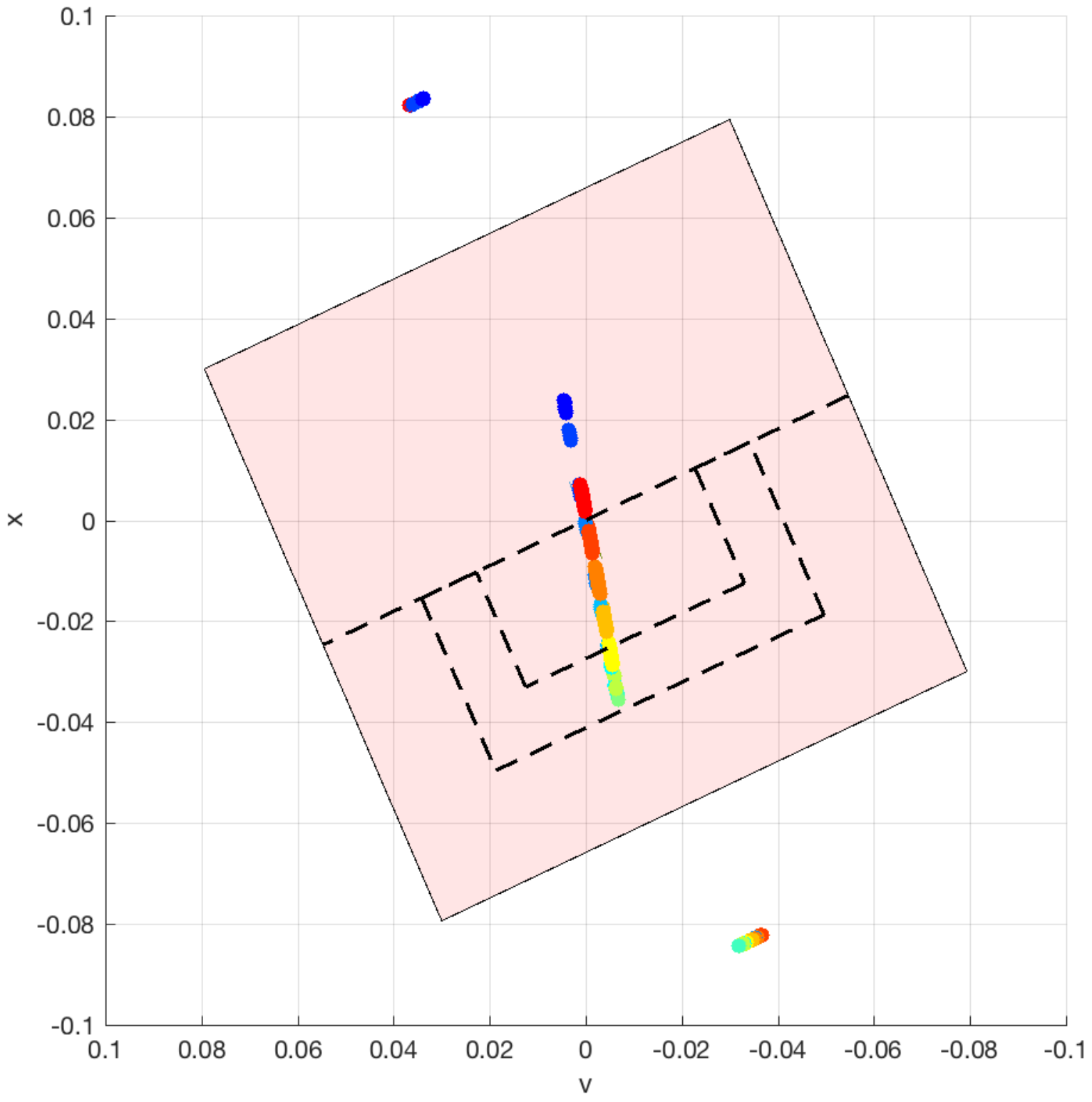}
  }

  \subfloat[$i = 25$
 ]{
    \includegraphics[width=0.4\textwidth]{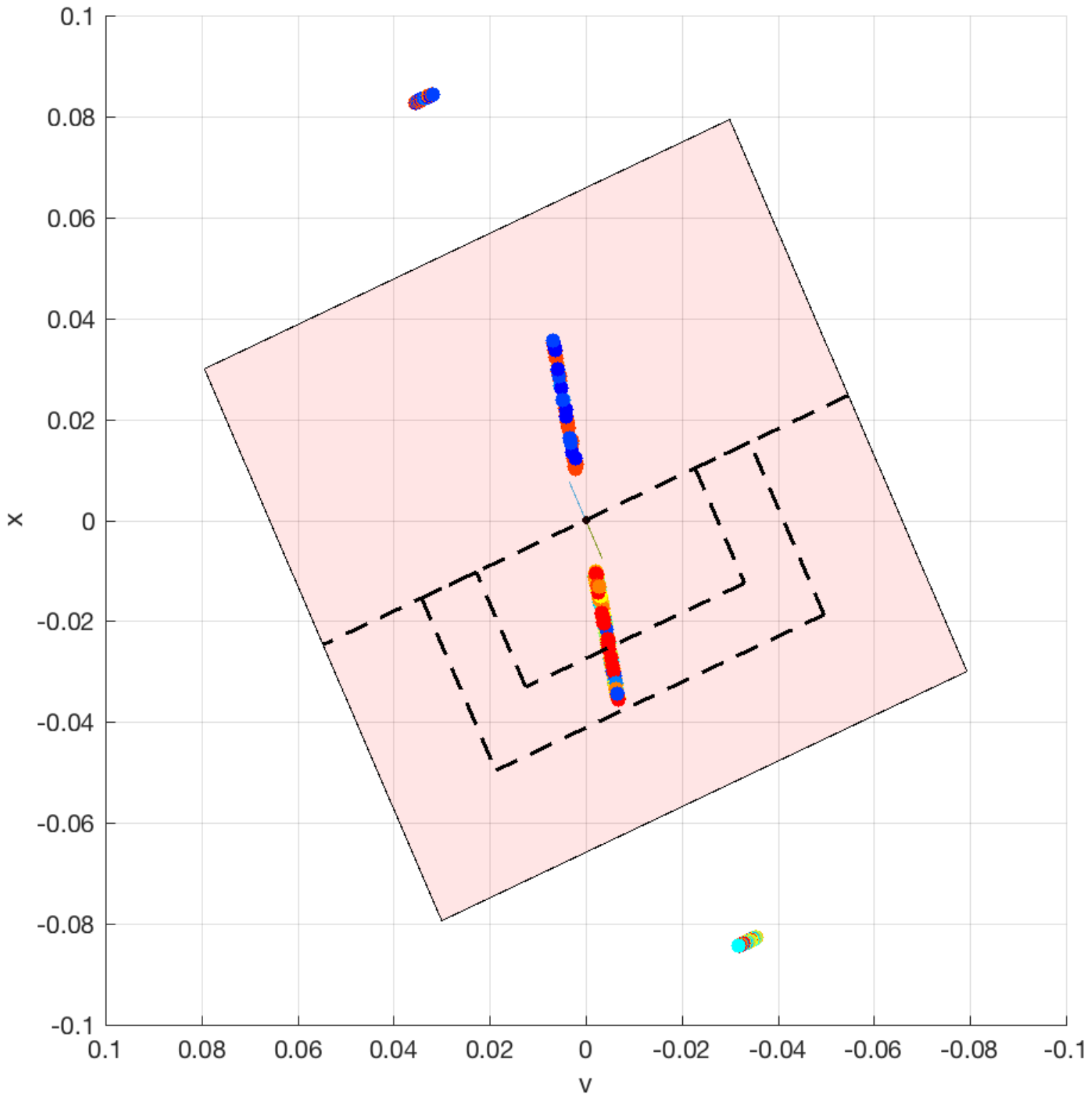}
  }\qquad
  \subfloat[$i = 50$
  ]{
    \includegraphics[width=0.4\textwidth]{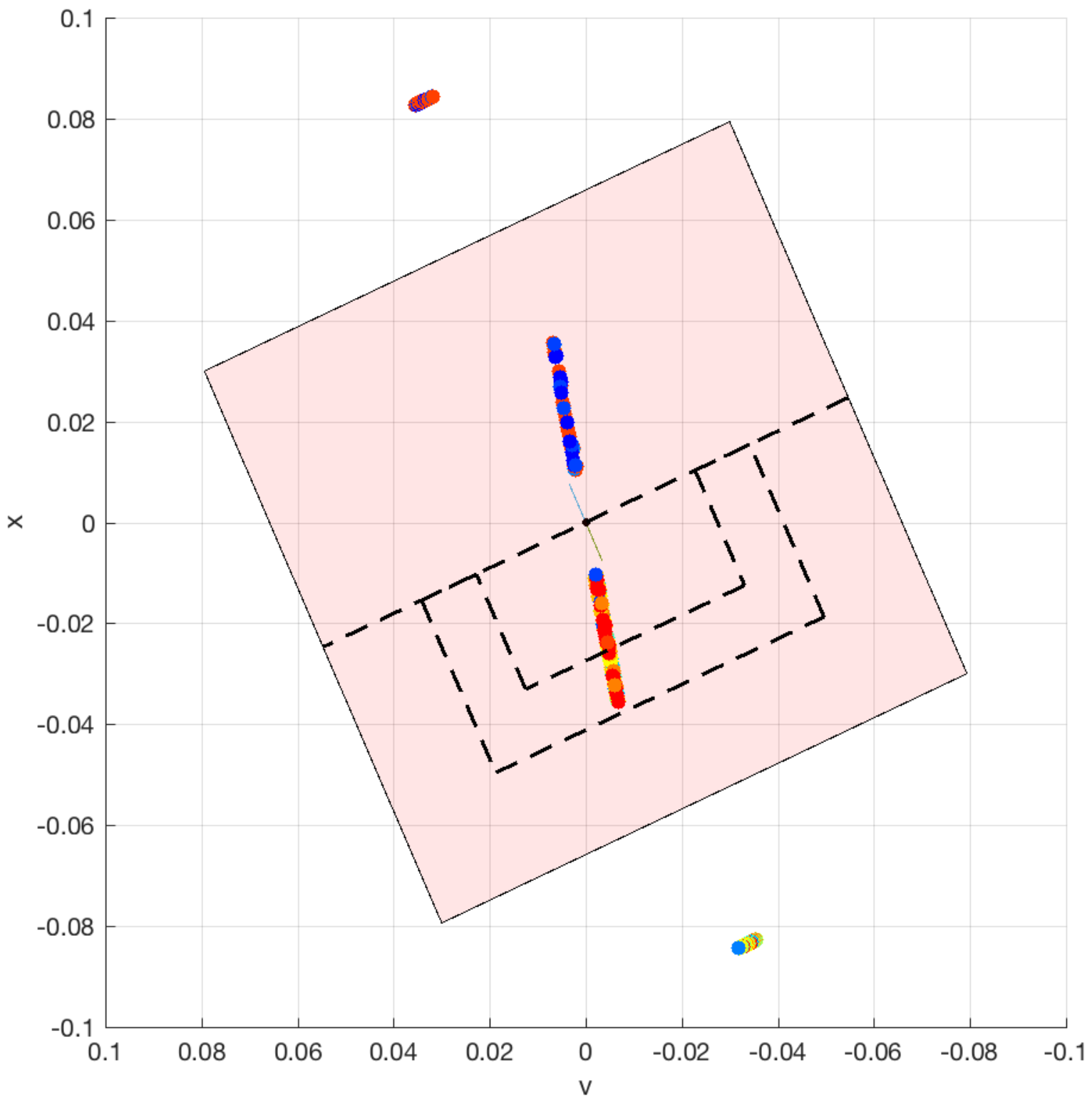}
  }

  \subfloat[$i = 75$
 ]{
    \includegraphics[width=0.4\textwidth]{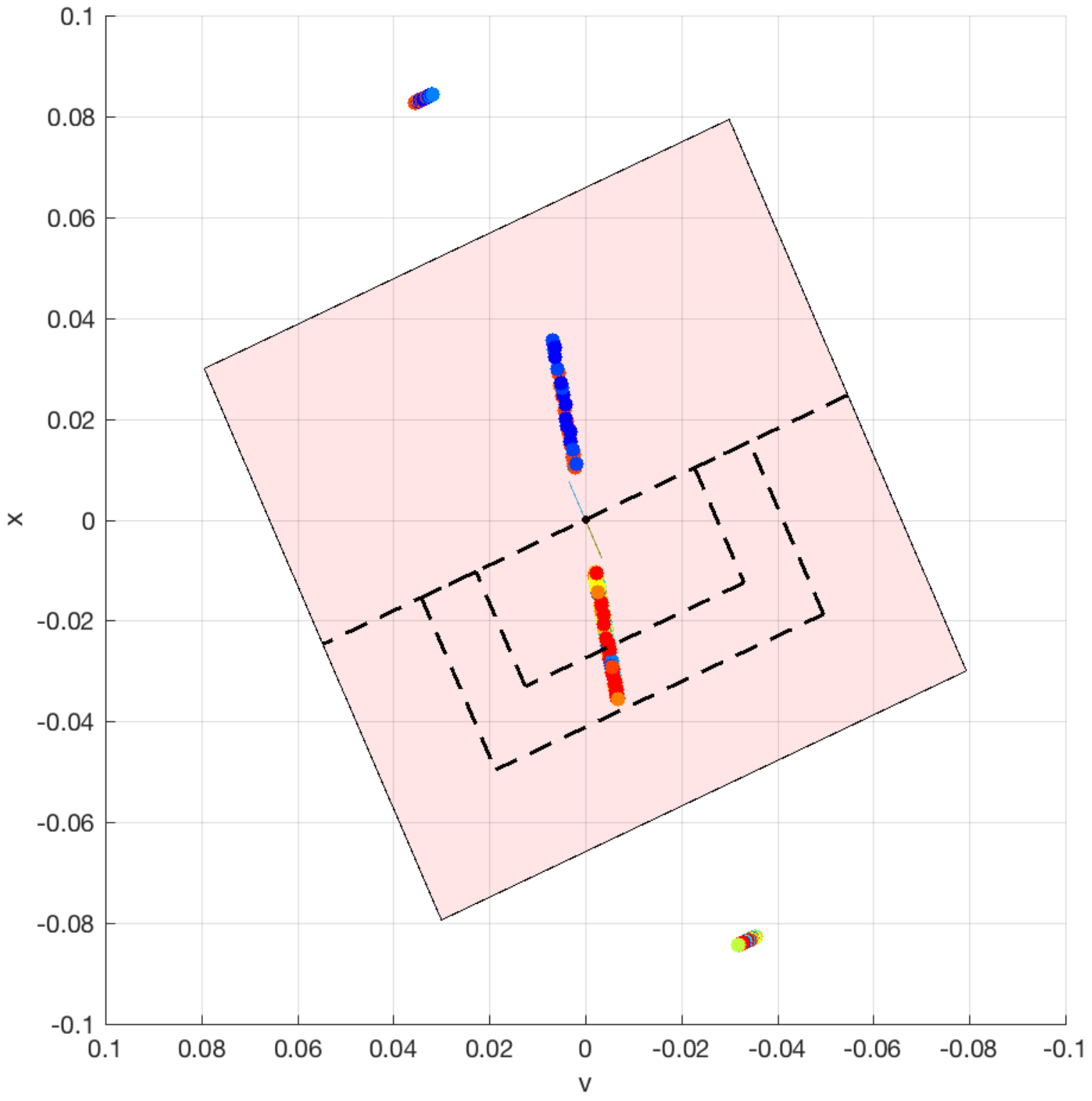}
  }\qquad
  \subfloat[$i = 100$
  ]{
    \includegraphics[width=0.4\textwidth]{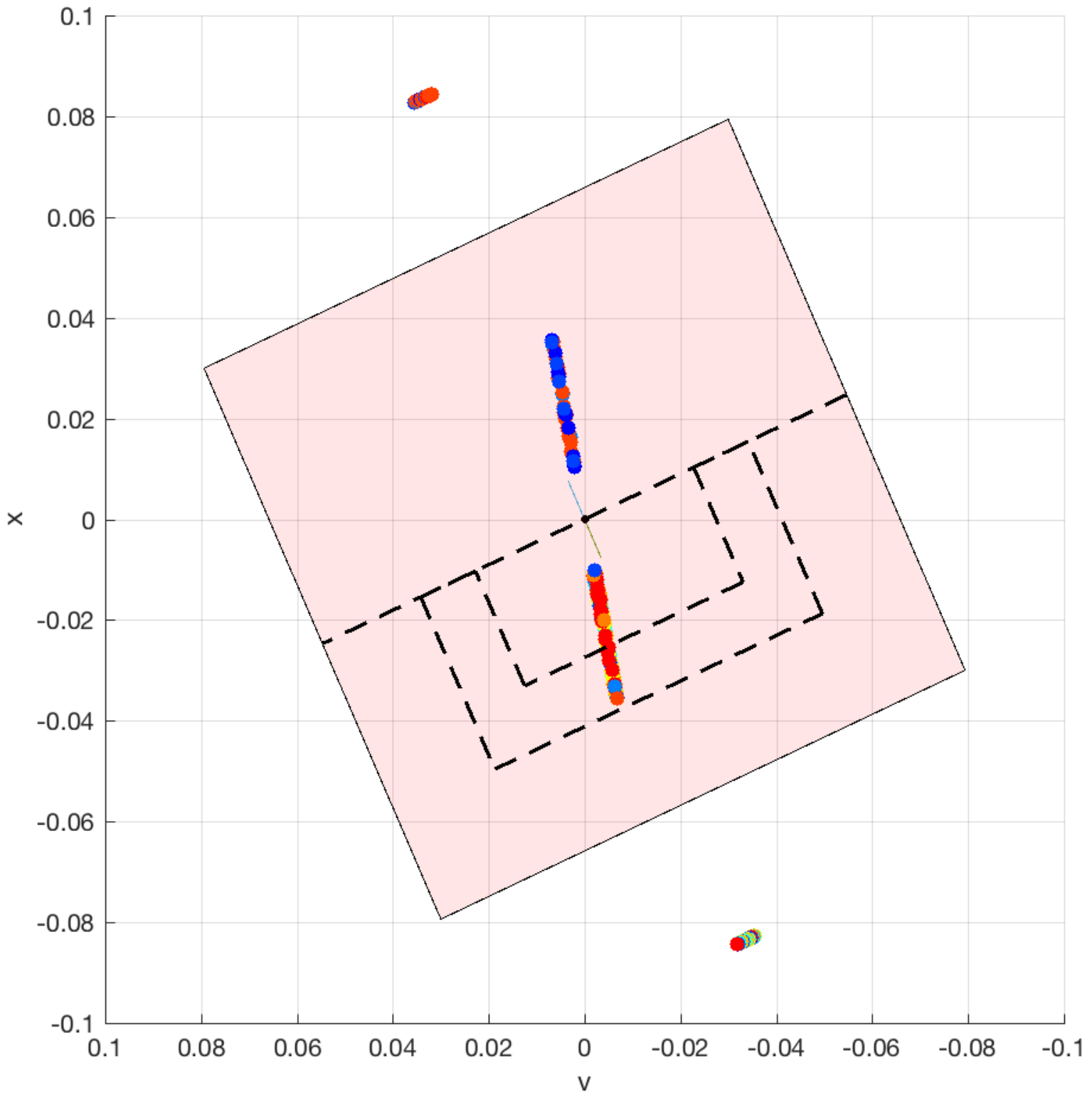}
  }
 \caption{Dynamics of the half-frame of points $\Sigma^{\rm in}_{\rm grid}$
 on the section $\Sigma^{\rm in}$
 under repeated applications of the Poincar\'{e} map
 $\Pi^i : \Sigma^{\rm in} \to \Sigma^{\rm in}$, $i = 1,2, \dots$,
 for $\delta = 0.9$, $\beta = 2.899$, $\text{\underline{s}} = 0.7955$
 (before bifurcation).}
 \label{fig:lorenz_like:unstable_homo:PM_before}
 \vspace{-1pt}
\end{figure}
%%%%%%%%%%%%%%%%%%%%%%%%%%%%%%%%%%%%%%%%%%%%%%%%%%%%%%%%%%%%%%%%%%%%%%%%%%%%%%%%%%%%%%%%%%%%%%%%
\begin{figure}[h!]
 \centering
 \subfloat[$i = 0$
 ]{
    \includegraphics[width=0.4\textwidth]{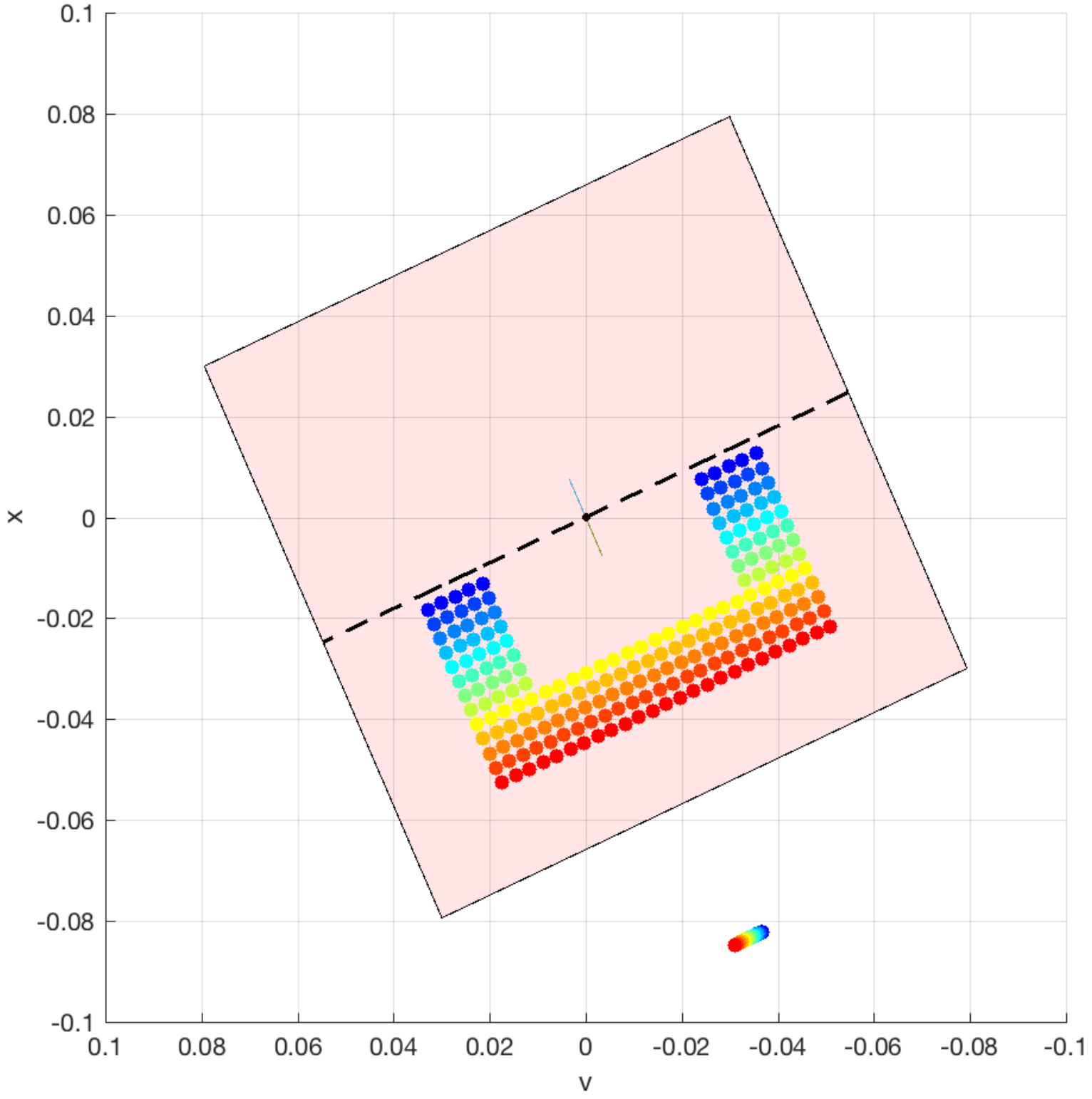}
  }\qquad
  \subfloat[$i = 1$
  ]{
    \includegraphics[width=0.4\textwidth]{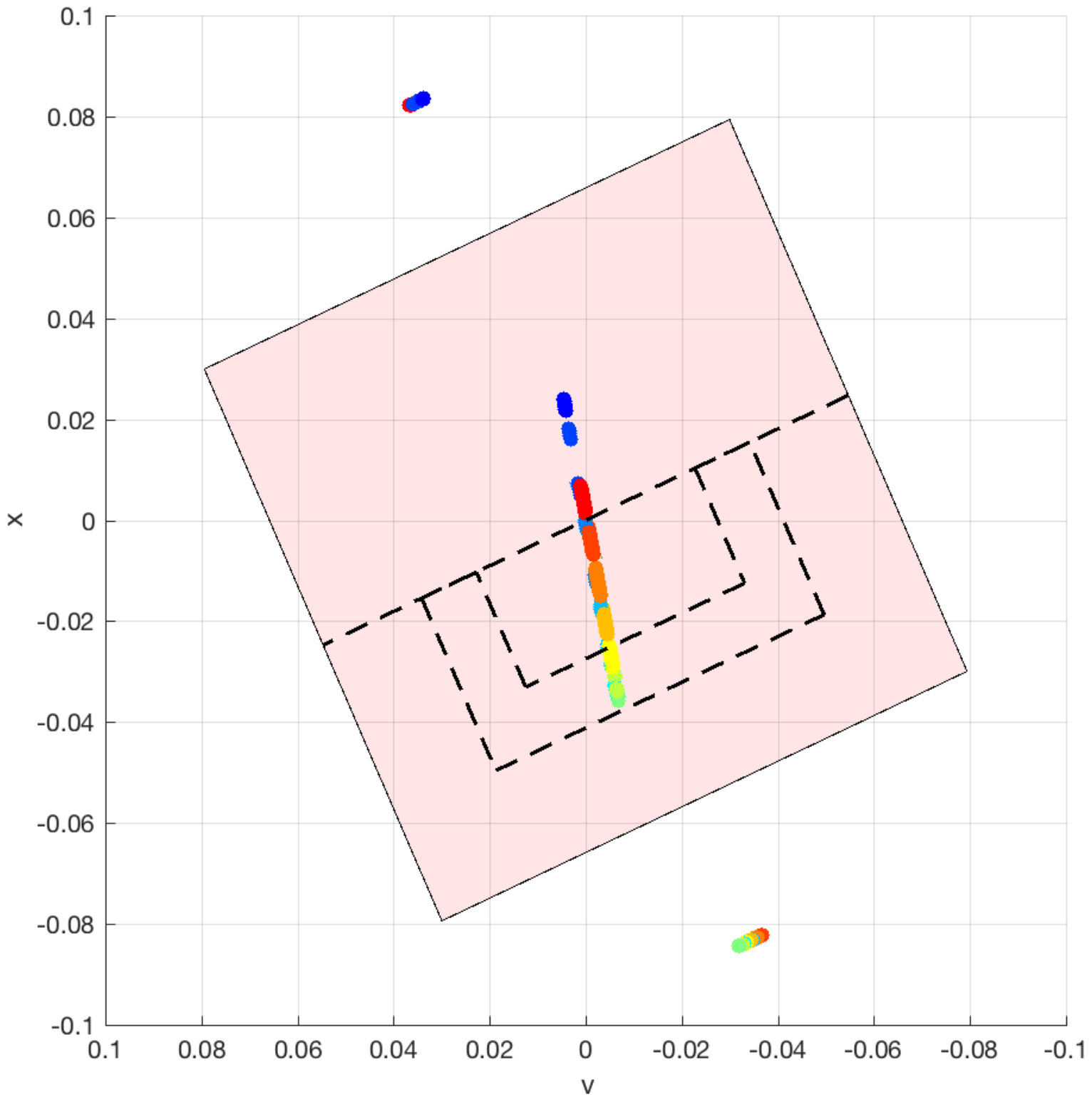}
  }

  \subfloat[$i = 25$
 ]{
    \includegraphics[width=0.4\textwidth]{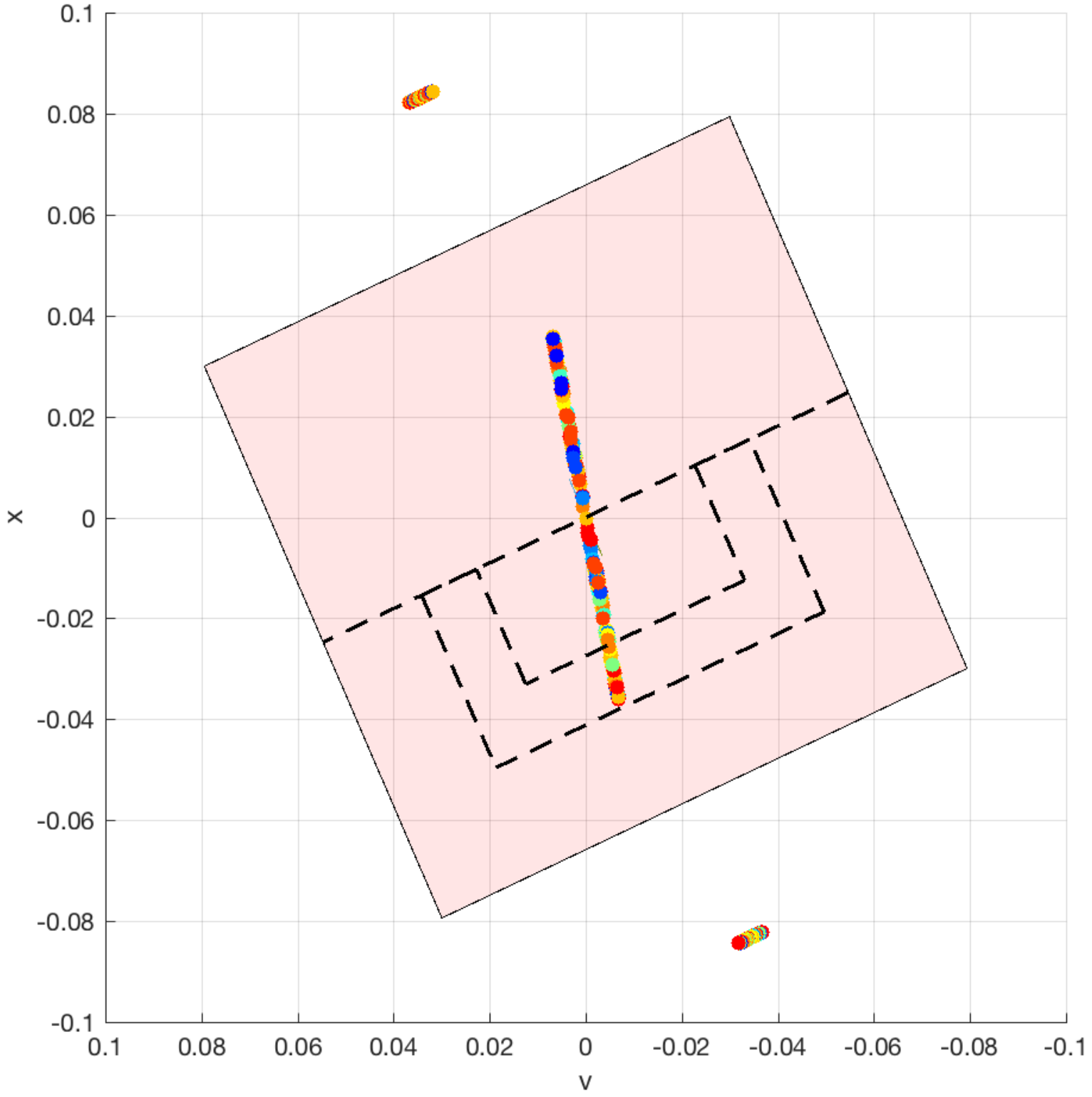}
  }\qquad
  \subfloat[$i = 50$
  ]{
    \includegraphics[width=0.4\textwidth]{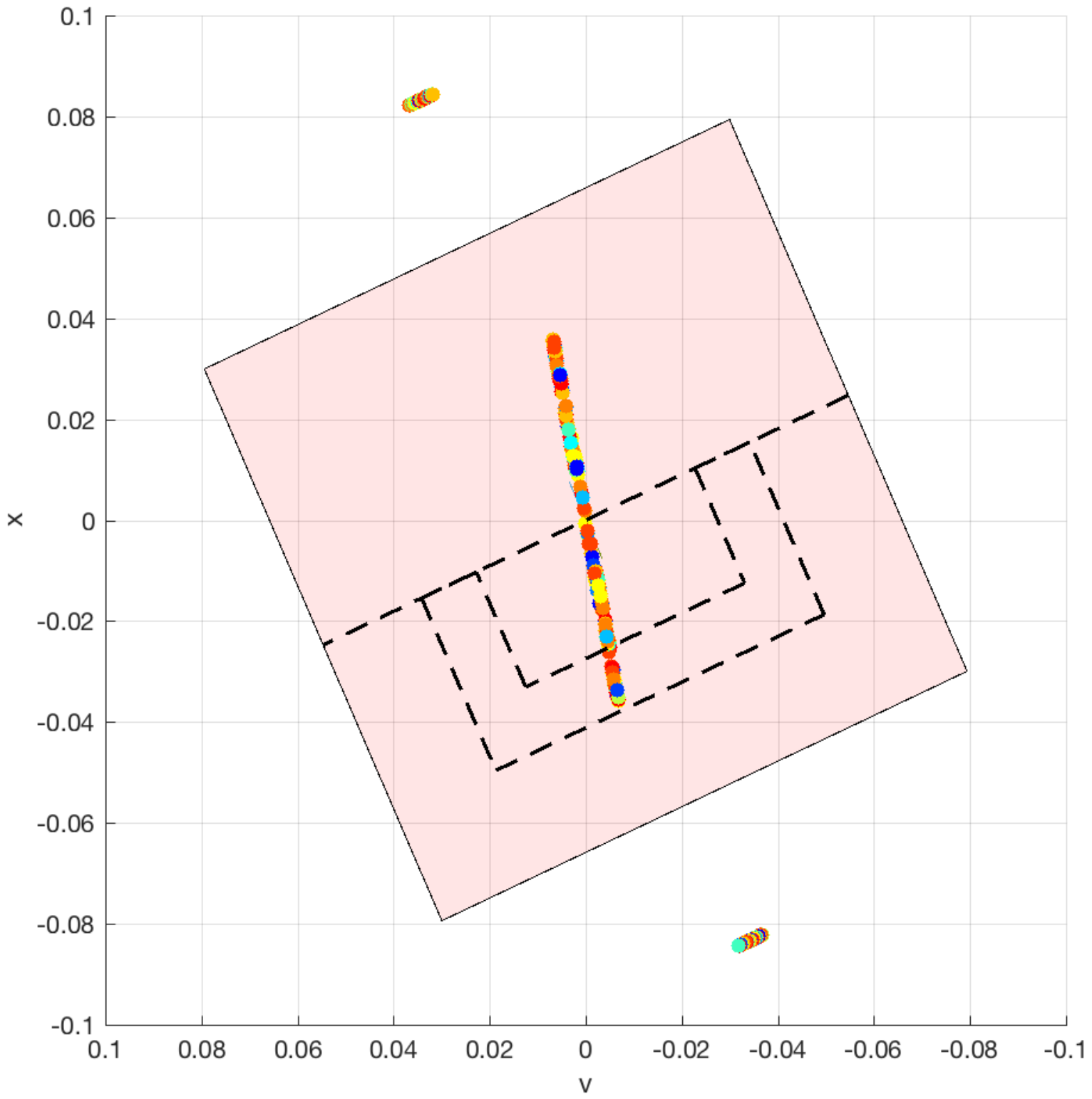}
  }

  \subfloat[$i = 75$
  ]{
    \includegraphics[width=0.4\textwidth]{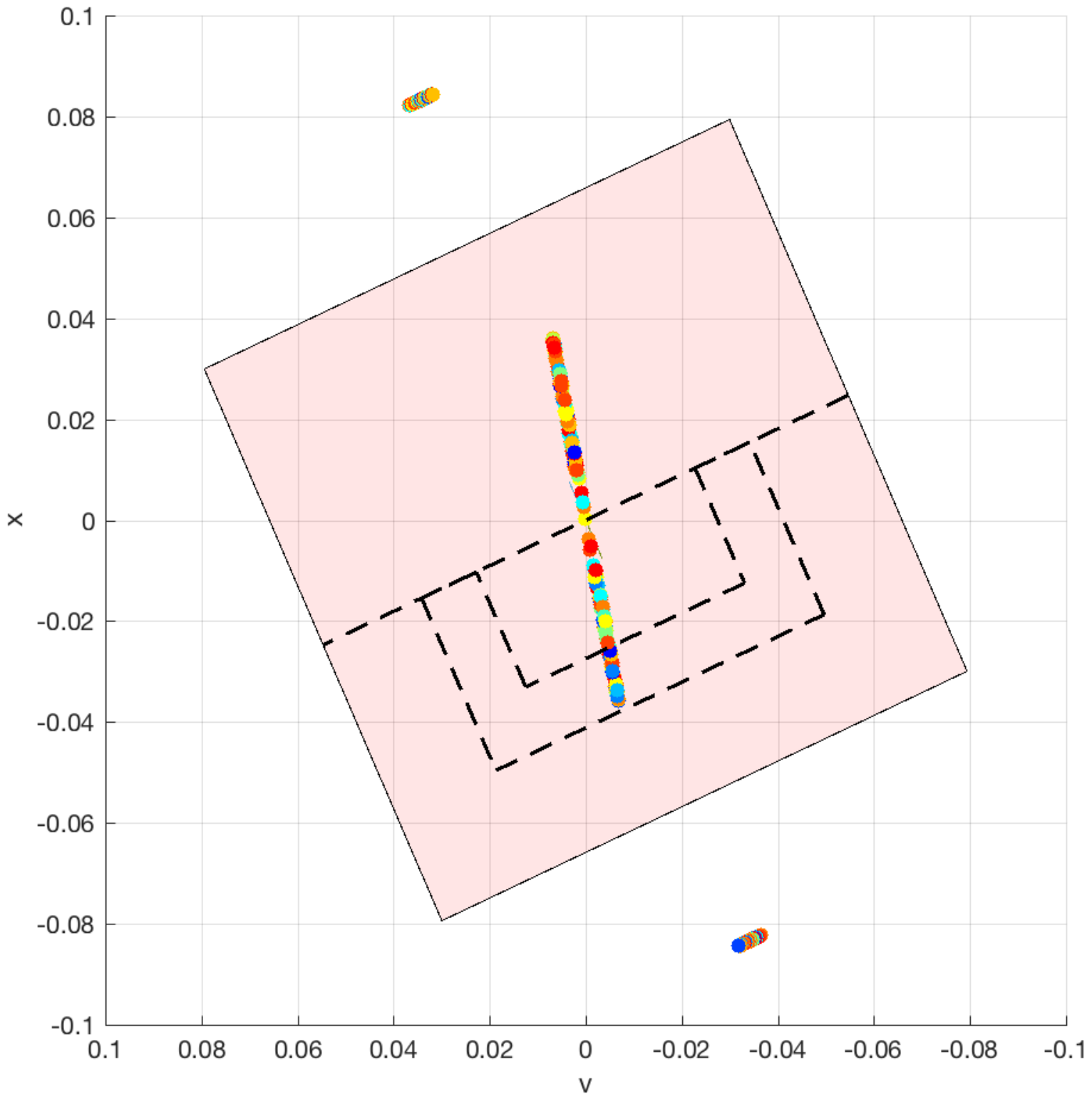}
  }\qquad
  \subfloat[$i = 100$
  ]{
    \includegraphics[width=0.4\textwidth]{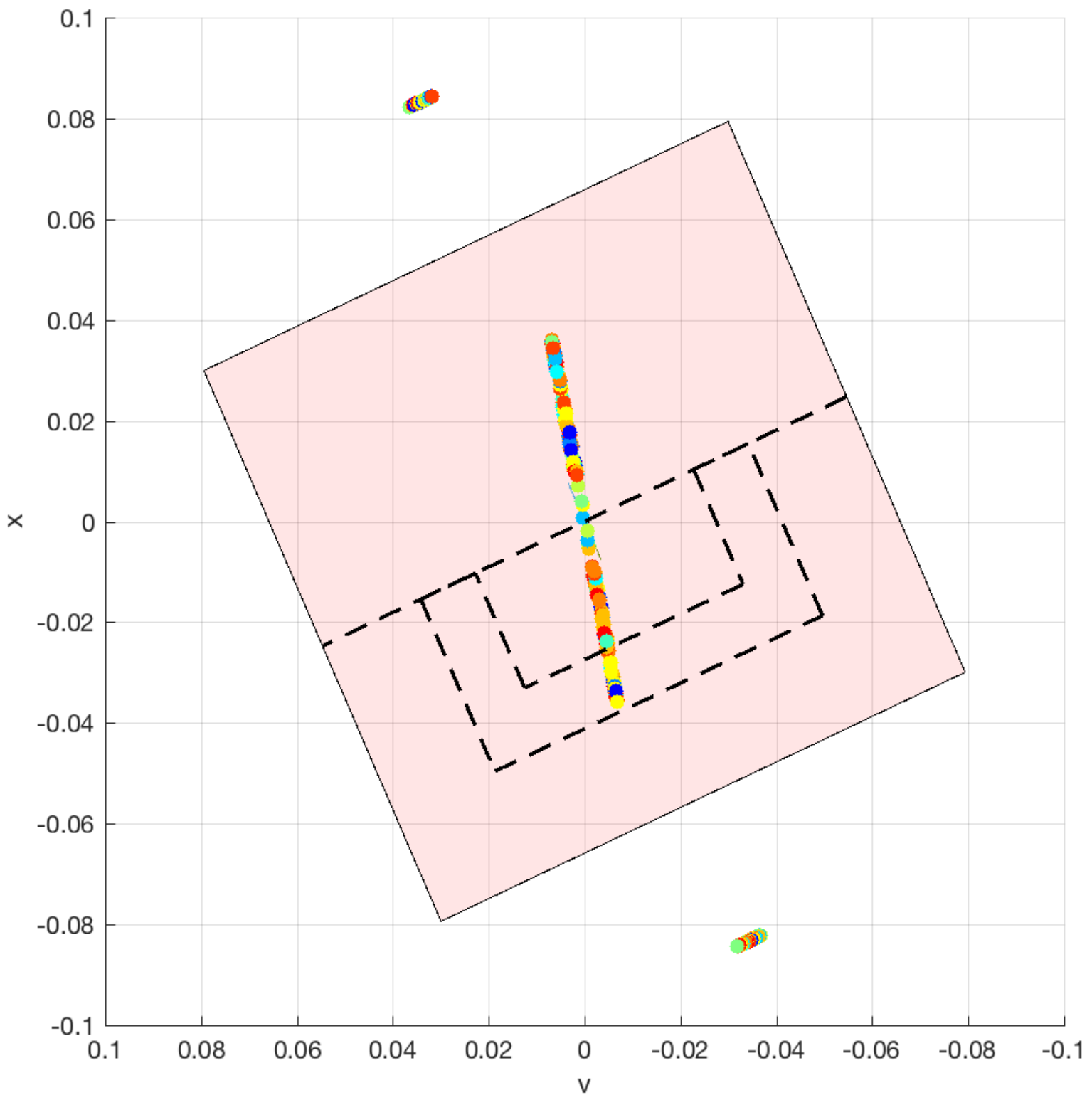}
  }
 \caption{Dynamics of the half-frame of points $\Sigma^{\rm in}_{\rm grid}$
 on the section $\Sigma^{\rm in}$
 under repeated applications of the Poincar\'{e} map
 $\Pi^i : \Sigma^{\rm in} \to \Sigma^{\rm in}$, $i = 1,2, \dots$,
 for $\delta = 0.9$, $\beta = 2.899$, $\overline{s} = 0.7958$
 (after bifurcation).}
 \label{fig:lorenz_like:unstable_homo:PM_after}
 \vspace{-1pt}
\end{figure}
\end{document}